\theoremstyle{plain}
\newtheorem{theorem}      {Theorem}      [section]
\newtheorem*{theorem*}    {Theorem \eqref{thm:appl}}
\newtheorem{proposition}  [theorem]  {Proposition}
\newtheorem{corollary}    [theorem]  {Corollary}
\theoremstyle{definition}
\newtheorem{example}      [theorem]  {Example}
\newtheorem{remark}       [theorem]  {Remark}
\def \i{\mbox{${\textnormal{i}}$}}
\def \d{\mbox{${\textnormal{d}}$}}
\def \dz{\mbox{${\stackrel{o}{\textnormal{d}}}$}}
\def \Dz{\mbox{${\stackrel{o}{\Delta}}$}}
\def \nz{\mbox{${\stackrel{o}{\nabla}}$}}
\def \tz{\mbox{${\stackrel{o}{\tau}}$}}
\def \r{\mbox{${\mathbb R}$}}
\def \s{\mbox{${\mathbb S}$}}
\def \f{\mbox{$\varphi$}}
\numberwithin{equation}{section}
\begin{document}

\title[New constructions of biharmonic polynomial maps between spheres]
{New constructions of biharmonic polynomial maps between spheres}

\author{Rare\c s Ambrosie}

\address{Faculty of Mathematics\\ Al. I. Cuza University of Iasi\\
Blvd. Carol I, 11 \\ 700506 Iasi, Romania} \email{rares_ambrosie@yahoo.com}

\thanks{Thanks are due to Stefano Montaldo, Cezar Oniciuc and Ye-Lin Ou for useful comments and remarks.}

\subjclass[2010]{53C43, 35G20, 15A63}

\keywords{Biharmonic maps, spherical maps, homogeneous polynomial maps}

\begin{abstract}
In this paper, we study diagonal maps between spheres given by two homogeneous polynomial maps between spheres, defined on the same domain sphere. First we find their bitension field, then we give a method for generating proper biharmonic maps between spheres, relying on harmonic homogeneous polynomial maps of different degrees. Further, we establish a result for constructing proper biharmonic product maps using harmonic homogeneous polynomial maps between spheres.

\end{abstract}

\maketitle

\section{Introduction}

Biharmonic maps represent a natural fourth order generalization of the well-known harmonic maps. As suggested by J. Eells and J.H. Sampson in  \cite{ES64,ES65}, or J. Eells and L. Lemaire in \cite{EL83}, biharmonic maps $\phi:M^m\to N^n$ between two Riemannian manifolds are critical points of the bienergy functional
$$
E_2:C^\infty(M,N)\to \mathbb{R}, \qquad E_2(\phi) = \frac{1}{2}\int_{M}\left|\tau(\phi)\right|^2 \ v_g,
$$
where $M$ is compact and $\tau(\phi) = \textnormal{trace}\nabla d\phi$ is the tension field associated to $\phi$. The tension field $\tau(\phi)$ is a section in the pull-back bundle $\phi^{-1}TN$ and its vanishing characterises the harmonicity of the map $\phi$. In 1986, G.Y. Jiang proved in  \cite{J86,J86-2} that the biharmonic maps are characterized by the vanishing of their bitension field, where the bitension field is given by
$$
\tau_2(\phi) = -\Delta\tau(\phi)-\textnormal{trace} R^N\left(d\phi(\cdot),\tau(\phi)\right)\d\phi(\cdot).
$$
The equation $\tau_2(\phi) = 0$ is called the biharmonic equation and it is a fourth order semilinear elliptic equation.

From the biharmonic equation, any harmonic map is biharmonic, so we are interested in the study of biharmonic maps which are not harmonic, called proper biharmonic. Using a simple Bochner-Weitzenb$\ddot{o}$ck formula, G.Y. Jiang proved in \cite{J86,J86-2} that if $M$ is compact and $N$ has non-positive sectional curvature, then a biharmonic map $\phi$ from $M$ to $N$ has to be harmonic. Over the years, biharmonic maps have been studied in various geometric contexts, with a particular focus on their behavior in spaces of positive curvature, especially in the Euclidean sphere, and they continue to be a subject of interest (see, for example, \cite{B24}, \cite{BS24}, \cite{ER93}, \cite{MOR16}, \cite{O03}, \cite{YO12}, \cite{PR90}, \cite{GT87}, \cite{WOY14}). 

The study of biharmonicity for isometric immersions, i.e. submanifolds, in Euclidean spheres has led to several classification theorems and important results (see, for example, \cite{FO22}, \cite{YO12} and \cite{OC20}). 

In \cite{LO07}, the authors studied diagonal biharmonic immersions $\phi:M\to\s^{n_1+n_2+1}$, $\phi\left (p\right) = \left(\alpha\phi_1, \beta\phi_2\right)$, where $\phi_1:\left(M^m, g\right)\to\s^{n_1}(r_1)$ and $\phi_2:\left(M^m, g\right)\to\s^{n_2}(r_2)$ are minimal immersions. Here $\alpha$ and $\beta$ are real numbers such that $\alpha^2r_1^2 + \beta^2r_2^2 = 1$, and by $\s^m\left (r\right)$ we indicate the $m$-dimensional Euclidean sphere of radius $r$. When $r = 1$, we write $\s^m$ instead of $\s^m(1)$. In particular, it was proved that any proper-biharmonic immersion $\phi$ with constant mean curvature from a $2$-dimensional sphere into $\s^n$ has to be the diagonal sum of two different Boruvka minimal immersions. We know that Boruvka minimal immersions are given by homogeneous polynomial maps.

Motivated by the above result, in our paper we study the diagonal maps from $\s^m$ into $\s^n$, $ \varphi = \i \circ (\varphi_1, \varphi_2) : \s^m \to \s^{n_1 + n_2 + 1}$, where $\i$ denotes the canonical inclusion of the standard product space $\s^{n_1} \times \s^{n_2}$ into $\s^{n_1 + n_2 + 1}$, and $ \varphi_1 : \s^m \to \s^{n_1}(r_1)$, $\varphi_2 : \s^m \to \s^{n_2}(r_2)$, $r_1^2 + r_2^2 = 1$, are homogeneous polynomial maps of degrees $k_1$ and $k_2$, respectively. 

Then, we provide a result for constructing proper biharmonic maps by using harmonic homogeneous polynomial maps between spheres, where these maps are defined on unit spheres of different dimensions. More precisely, we consider the case when $ \varphi_1 : \s^{m_1} \to \s^{n_1}(r_1)$, $\varphi_2 : \s^{m_2} \to \s^{n_2}(r_2)$, $r_1^2 + r_2^2 = 1$, are harmonic homogeneous polynomial maps of degrees $k_1$ and $k_2$, respectively. Then we study the biharmonicity of the product map $ \varphi = \i \circ (\varphi_1, \varphi_2) : \s^{m_1}\times\s^{m_2} \to \s^{n_1 + n_2 + 1}$, where $\i$ denotes the canonical inclusion of the standard product space $\s^{n_1} \times \s^{n_2}$ into $\s^{n_1 + n_2 + 1}$.

Our study could be seen as a continuation of previous work on biharmonic homogeneous polynomial maps between unit spheres (\cite{AOO23}, \cite{AO24}). Besides constructing proper biharmonic quadratic homogeneous polynomial maps, i.e. the degree $k=2$, the following rigidity result was proved: any proper biharmonic quadratic homogeneous polynomial map between spheres is obtained from a quadratic homogeneous polynomial map which lies, as a harmonic map, in the small hypersphere of radius $1/\sqrt{2}$ of the target sphere.

\textbf{Conventions.} We use the following sign conventions for the rough Laplacian, that acts on the set $C\left(\phi^{-1}TN\right)$ of all sections of the pull-back bundle $\phi^{-1}TN$, and for the curvature tensor field
$$
\Delta \sigma = -\textnormal{trace}_g\nabla^2\sigma, \quad R\left(X,Y\right)Z = \nabla_X\nabla_Y Z - \nabla_Y\nabla_YZ -\nabla_{\left[X,Y\right]}Z.
$$

\section{Preliminary results}

First, we recall the following results.

\begin{theorem}[\cite{LO07}]\label{th1}
Let M be a compact manifold and consider $\psi: M \rightarrow \s^n(r/\sqrt{2})$ a nonconstant map, where $\s^n(r/\sqrt{2})$ is a small hypersphere of radius $r/\sqrt{2}$ of $\s^{n+1}\left (r\right)$. The map $\phi = \i\circ\psi : M\rightarrow \s^{n+1}\left (r\right)$, where $\i$ is the canonical inclusion, is proper biharmonic if and only if $\psi$ is harmonic and the energy density $e(\psi)$ is constant.
\end{theorem}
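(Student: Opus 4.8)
The plan is to carry out the whole computation in the ambient Euclidean space $\r^{n+2}\supset\s^{n+1}(r)$, exploiting the very special position of the hypersphere of radius $r/\sqrt{2}$. After a rotation I identify the small hypersphere with $\{(y, r/\sqrt{2}) : y\in\r^{n+1},\ |y| = r/\sqrt{2}\}$, so that $\psi$ becomes an $\r^{n+1}$-valued map with $|\psi|\equiv r/\sqrt{2}$ and
\[
\phi \;=\; \i\circ\psi \;=\; \Big(\psi,\ \tfrac{r}{\sqrt 2}\Big)\in\r^{n+2},\qquad |\phi|\equiv r .
\]
The point of the radius $r/\sqrt 2$ is that the vectors $\phi/r$ and $\nu:=\tfrac1r(-\psi,\, r/\sqrt 2)$ form an orthonormal pair, $\nu$ being the unit normal of the small hypersphere inside $\s^{n+1}(r)$, and that the inclusion is totally umbilical with mean curvature vector $\tfrac1r\nu$. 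The standard formula for the tension field of a map into a round sphere then yields
\[
\tau(\phi) \;=\; \d\i\big(\tau(\psi)\big) + \frac{2e(\psi)}{r}\,\nu ,
\]
whose two summands are respectively tangent and normal to the small hypersphere. Since $\psi$ is nonconstant, $e(\psi)\not\equiv 0$, so $\tau(\phi)\neq 0$ and \emph{any} biharmonic $\phi$ of this form is automatically proper; this disposes of the word ``proper'' once biharmonicity is settled.

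Next I would establish, again in $\r^{n+2}$, a reduced form of the bitension field of a map into $\s^{n+1}(r)$. Writing $\tau=\tau(\phi)$ and computing $\Delta^\phi\tau$ through the pullback connection, the curvature term $\trace R^{\s^{n+1}(r)}(\d\phi,\tau)\d\phi$ exactly cancels the term $\tfrac1{r^2}\sum_i\langle\tau,\d\phi(e_i)\rangle\d\phi(e_i)$ produced by the rough Laplacian, leaving
\[
\tau_2(\phi) \;=\; -\big(\Delta\tau(\phi)\big)^{\top} + \frac{|\d\phi|^2}{r^2}\,\tau(\phi),
\]
where $\Delta$ acts componentwise on $\r^{n+2}$-valued maps and $(\cdot)^{\top}$ denotes projection orthogonal to $\phi$. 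Thus $\phi$ is biharmonic if and only if $-\Delta\tau(\phi)+\tfrac{|\d\phi|^2}{r^2}\tau(\phi)$ is parallel to $\phi$. Substituting $\phi=(\psi,r/\sqrt 2)$ and separating the component along $\nu$ from the components tangent to the small hypersphere, all terms collect into the single $\r^{n+1}$-valued equation
\[
\Delta\tau(\psi) \;=\; \frac{2}{r^2}\,(\Delta\mu)\,\psi \;-\; \frac{2}{r^2}\sum_i (e_i\mu)\,\d\psi(e_i),\qquad \mu:=|\d\psi|^2 = 2e(\psi),
\]
which is therefore equivalent to the biharmonicity of $\phi$. The hard part is precisely this reduction: the bookkeeping of the terms produced by $\Delta(\mu\psi)$ and by the normal and radial projections is delicate, and it is the cancellations here that let compactness later split $\tau_2(\phi)=0$ into two independent conditions.

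The ``if'' direction is then immediate: if $\psi$ is harmonic and $e(\psi)$ is constant, then $\tau(\psi)=0$ and $\mu$ is constant, so both sides above vanish and $\phi$ is (proper) biharmonic. For the ``only if'' direction I would pair the reduced equation with two orthogonal directions. Pairing with $\psi$ (and using $\langle\d\psi(e_i),\psi\rangle=0$ and $|\psi|^2=r^2/2$) gives $\langle\Delta\tau(\psi),\psi\rangle=\Delta\mu$; on the other hand, differentiating $\langle\tau(\psi),\psi\rangle=0$ twice and introducing the vector field $X$ metrically dual to $Y\mapsto\langle\tau(\psi),\d\psi(Y)\rangle$, the identity $\di X=\sum_i\langle\nabla^{\psi}_{e_i}\tau(\psi),\d\psi(e_i)\rangle+|\tau(\psi)|^2$ rewrites the left-hand side as $2\,\di X-|\tau(\psi)|^2$, so that
\[
2\,\di X - |\tau(\psi)|^2 \;=\; \Delta\mu .
\]
Integrating over the compact manifold $M$ annihilates both $\di X$ and $\Delta\mu$, forcing $\int_M|\tau(\psi)|^2\,v_g=0$ and hence $\tau(\psi)\equiv 0$, i.e.\ $\psi$ is harmonic. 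Feeding $\tau(\psi)=0$ back into the reduced equation leaves $\Delta\mu=0$, and on a closed manifold this forces $\mu=2e(\psi)$ to be constant, completing the argument.
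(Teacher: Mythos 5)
The paper itself offers no proof of this statement: Theorem \ref{th1} is recalled from \cite{LO07} as a known preliminary result, so your argument can only be compared with the original one in the literature. Your proof is correct. I checked the three key computations: the reduced formula $\tau_2(\phi) = -\left(\Delta\tau(\phi)\right)^{\top} + \frac{|\d\phi|^2}{r^2}\tau(\phi)$ (the tangential part of the rough Laplacian indeed cancels against half of the curvature term of the round sphere); the equivalence of $\tau_2(\phi)=0$ with the $\r^{n+1}$-valued equation $\Delta\tau(\psi) = \frac{2}{r^2}(\Delta\mu)\psi - \frac{2}{r^2}\sum_i(e_i\mu)\,\d\psi(e_i)$, which follows from expanding $\Delta(\mu\psi)$ and substituting $\Delta\psi = \frac{2\mu}{r^2}\psi - \tau(\psi)$; and the identity $\langle\Delta\tau(\psi),\psi\rangle = 2\,\di X - |\tau(\psi)|^2$ obtained by differentiating $\langle\tau(\psi),\psi\rangle = 0$ twice. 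Integration over the compact $M$ then kills $\di X$ and $\Delta\mu$ and forces $\tau(\psi)\equiv 0$; after that the two sides of the reduced equation are mutually orthogonal (one is radial, the other tangent to the small hypersphere), so $\Delta\mu = 0$ pointwise and $\mu$ is constant by compactness --- you should state this orthogonality explicitly, but it is immediate. The difference from \cite{LO07} is methodological: there the computation is intrinsic, using the composition law for the bitension field together with the second fundamental form of the totally umbilical inclusion $\i$, and splitting $\tau_2(\phi)$ into components tangent and normal to $\s^n(r/\sqrt{2})$; you instead work componentwise in $\r^{n+2}$ with flat Laplacians and position vectors, after placing the small hypersphere in a coordinate slice. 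The compactness mechanism --- a divergence identity whose integral isolates $\int_M|\tau(\psi)|^2\,v_g$ --- is the same in both. What your version buys is uniformity with the extrinsic polynomial-map computations used throughout this paper (compare Theorems \ref{th2} and \ref{th4}); what the intrinsic version buys is that it extends with no extra work to other totally umbilical inclusions and does not depend on a choice of coordinates.
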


We note that the above result  provides a method for constructing proper biharmonic maps starting from harmonic maps. In this article, we explore alternative methods to construct proper biharmonic maps, aiming to expand the available techniques for such constructions.

Now, let $\varphi:(M^m,g)\to\s^n\left (r\right)$ be a map and let $\i:\s^n\left (r\right)\to \r ^{n+1}$ be the standard isometric embedding of the Euclidean sphere of radius 
$r$ into the Euclidean space. We consider the composition map
$$
\Phi = \i \circ \varphi:(M^m,g)\to \r ^{n+1}.
$$
As usual, we identify locally $\d\varphi\left (X\right)$ with $\d\Phi\left (X\right)$, for any vector fields $X$ tangent to $M$.

We denote $\theta = \left\langle\d \varphi, \tau(\varphi)\right\rangle = \left\langle\d \Phi, \tau(\Phi)\right\rangle$, i.e. $\theta$ is the $1$-form on $M$ given by
$$
\theta\left (X\right) = \left\langle\d \varphi\left (X\right), \tau(\varphi)\right\rangle = \left\langle\d \Phi\left (X\right), \tau(\Phi)\right\rangle.
$$

We denote by $\theta^\sharp$ the vector field corresponding to the $1$-form $\theta$ via the musical isomorphism.

It is a well-known fact that the standard isometric embedding $\i : \s^n\left (r\right) \to \mathbb{R}^{n+1}$ is a totally umbilical hypersurface, whose normal vector field is given by $\overline r$. The vector field $\overline r$ associates to each point in $\s^n\left (r\right)$ its position vector.

Since $\nabla_U^{\r ^{n+1}}\overline r = U$, for any $U\in C(T\r ^{n+1})$, we get that the second fundamental form of $\s ^n\left (r\right)$ into $\r ^{m+1}$ is given by
$$
B(X,Y) = -\frac{1}{r^2}\left\langle X,Y \right\rangle  \overline r,
$$
for any $X,Y\in C(T\s ^n\left (r\right))$.

With these notations, we have
\begin{theorem}\label{th2}
Let $\varphi:(M^m,g)\to \s^n\left (r\right)$ be a map, $\i:\s^n\left (r\right)\to \r ^{n+1}$ be the standard isometric embedding and let $\Phi = \i \circ \varphi:(M^m,g)\to \r ^{n+1}$. Then, the bitension field of the $\varphi$ is given by 
\begin{align}\label{ec-1}
\tau_2(\varphi) =& \tau_2(\Phi) + \left( -\frac{1}{r^2}\Delta |\d  \Phi|^2 + \frac{2}{r^2}\textnormal{div}\theta^\sharp - \frac{1}{r^2}|\tau(\Phi)|^2 + \frac{2}{r^4}|\d \Phi|^4 \right ) \Phi \\
                 & + \frac{2}{r^2}|\d  \Phi|^2 \tau(\Phi)+ \frac{2}{r^2}\d \Phi\left(\textnormal{grad}|\d \Phi|^2 \right). \nonumber
\end{align}
\end{theorem}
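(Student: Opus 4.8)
The plan is to push everything down to the Euclidean map $\Phi=\i\circ\varphi$, whose bitension field is simply $\tau_2(\Phi)=-\Delta\tau(\Phi)$: since $\r^{n+1}$ is flat the curvature term disappears, and sections of $\Phi^{-1}T\r^{n+1}$ are just $\r^{n+1}$-valued functions carrying the trivial connection. The strategy is then to convert the two genuinely spherical ingredients of $\tau_2(\varphi)=-\Delta^\varphi\tau(\varphi)-\trace_g R^N(\d\varphi,\tau(\varphi))\d\varphi$ into Euclidean data plus correction terms, and to recombine.

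First I would record the splitting of the tension field. Since $\i$ is totally umbilical with $B(X,Y)=-\frac{1}{r^2}\langle X,Y\rangle\overline{r}$ and $\overline{r}\circ\varphi=\Phi$, the composition formula $\tau(\Phi)=\d\i(\tau(\varphi))+\trace_g B(\d\varphi,\d\varphi)$ gives the clean decomposition
\[
\tau(\Phi)=\tau(\varphi)-\tfrac{1}{r^2}|\d\Phi|^2\,\Phi,
\]
where $\tau(\varphi)$ is the tangential and $-\frac{1}{r^2}|\d\Phi|^2\Phi$ the normal part. Because $|\Phi|^2=r^2$, this yields the scalar identity $|\tau(\varphi)|^2=|\tau(\Phi)|^2-\frac{1}{r^2}|\d\Phi|^4$, which I will use at the very end to absorb the quartic terms. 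The curvature term is then disposed of directly: from $R^N(X,Y)Z=\frac{1}{r^2}(\langle Y,Z\rangle X-\langle X,Z\rangle Y)$, tracing over an orthonormal frame and recognising $\sum_i\theta(e_i)\d\varphi(e_i)=\d\varphi(\theta^\sharp)$ gives $\trace_g R^N(\d\varphi,\tau(\varphi))\d\varphi=\frac{1}{r^2}\big(\d\varphi(\theta^\sharp)-|\d\Phi|^2\tau(\varphi)\big)$.

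The heart of the argument is the comparison of the two rough Laplacians on the tangential section $\tau(\varphi)$. For any tangential section $V$ the Gauss formula reads $\nabla^\Phi_X V=\nabla^\varphi_X V-\frac{1}{r^2}\langle\d\varphi(X),V\rangle\Phi$; differentiating once more in a geodesic frame, using $\nabla^\Phi_X\Phi=\d\Phi(X)$, produces a comparison identity of the shape $\Delta^\Phi V=\Delta^\varphi V+\frac{1}{r^2}(\cdots)\Phi+\frac{1}{r^2}\d\varphi\big((\langle\d\varphi(\cdot),V\rangle)^\sharp\big)$. Specialising to $V=\tau(\varphi)$, the scalar coefficient of $\Phi$ collapses to $2\,\di\theta^\sharp-|\tau(\varphi)|^2$ once I use $\sum_i\langle\d\varphi(e_i),\nabla^\varphi_{e_i}\tau(\varphi)\rangle=\di\theta^\sharp-|\tau(\varphi)|^2$ and $\sum_i\langle\nabla^\varphi_{e_i}\d\varphi(e_i),\tau(\varphi)\rangle=|\tau(\varphi)|^2$, while the surviving tangential term is exactly $\frac{1}{r^2}\d\varphi(\theta^\sharp)$. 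To evaluate the left-hand side I substitute $\tau(\varphi)=\tau(\Phi)+\frac{1}{r^2}|\d\Phi|^2\Phi$ and apply the Leibniz rule $\Delta(f\sigma)=(\Delta f)\sigma-2\nabla_{\grad f}\sigma+f\Delta\sigma$ with $f=|\d\Phi|^2$, $\sigma=\Phi$, together with $\Delta^\Phi\Phi=-\tau(\Phi)$ and $\Delta^\Phi\tau(\Phi)=-\tau_2(\Phi)$, so that $\Delta^\Phi\tau(\varphi)$ is expressed through $\tau_2(\Phi)$, $\Delta|\d\Phi|^2$, $\d\varphi(\grad|\d\Phi|^2)$, and multiples of $\Phi$ and $\tau(\Phi)$.

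Finally I would assemble $\tau_2(\varphi)$ from these three ingredients. The $\frac{1}{r^2}\d\varphi(\theta^\sharp)$ contributions from the curvature term and from the Laplacian comparison cancel; the $|\d\Phi|^4\Phi$ terms cancel after inserting $|\tau(\varphi)|^2=|\tau(\Phi)|^2-\frac{1}{r^2}|\d\Phi|^4$; and re-expressing the leftover tangential term $\frac{2}{r^2}|\d\Phi|^2\tau(\varphi)$ through $\tau(\Phi)=\tau(\varphi)-\frac{1}{r^2}|\d\Phi|^2\Phi$ reintroduces precisely the advertised $\frac{2}{r^4}|\d\Phi|^4$ in the coefficient of $\Phi$, yielding \eqref{ec-1}. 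I expect the main obstacle to be the bookkeeping of tangential versus normal components throughout the Laplacian comparison — in particular ensuring that all divergence contributions coalesce cleanly into $\di\theta^\sharp$ and that every $|\d\Phi|^4\Phi$ term is tracked with the correct sign and power of $r$, since a single misplaced factor collapses the final simplification.
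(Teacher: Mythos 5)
Your proposal is correct and takes essentially the same approach as the paper: the paper states Theorem \ref{th2} without proof, deferring to the $r=1$ computation in \cite{AOO23}, and your derivation — the umbilical splitting $\tau(\Phi)=\tau(\varphi)-\frac{1}{r^2}|\d\Phi|^2\Phi$, tracing the sphere's curvature tensor, and the Gauss-formula comparison of the two rough Laplacians — is exactly that computation adapted to radius $r$. All the key identities check out, including the coefficient $2\,\di\theta^\sharp-|\tau(\varphi)|^2$, the cancellation of the $\d\varphi(\theta^\sharp)$ contributions, and the final bookkeeping that produces the $\frac{2}{r^4}|\d\Phi|^4$ term in the coefficient of $\Phi$.
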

We note that for the case $r=1$, this result can be found in \cite{AOO23}.

Further, consider the diagram below
\bigskip
\begin{center}
  \begin{tikzpicture}
  \matrix (m) [matrix of math nodes,row sep=3em,column sep=4em,minimum width=2em]
  {
     \r^{m+1} & \r^{n+1} \\
     \s^m & \s^n\left (r\right) \\};
  \path[-stealth]
    (m-2-1) edge node [left] {$\i$} (m-1-1)
    (m-1-1) edge node [above] {$F$} (m-1-2)
    (m-2-1) edge node [above] {$\Phi$} (m-1-2)
    (m-2-2) edge node [right] {$\i$} (m-1-2)
    (m-2-1) edge node [below] {$\varphi$} (m-2-2);
\end{tikzpicture}
\end{center}
where $F:\r ^{m+1}\rightarrow\r ^{n+1}$ is a vector valued function such that each component is a homogeneous polynomial of degree $k$. Such a map $F$ is called form of degree $k$. When $k=2$, we say that $F$ is a quadratic form, and we will keep the same terminology also for the induced map $\varphi$. We will always assume that $\varphi$ is not constant. Then, we have

\begin{theorem}{\cite{BW03}}\label{th3}
The tension field of the map $\varphi$ is given by
\begin{align}\label{equ10}
  \tau(\varphi) =& -\Dz F + \left(\frac{1}{r^2}\left|\dz F\right|^2 - k(m+2k-1)\right) \Phi.
\end{align}
\end{theorem}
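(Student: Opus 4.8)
The plan is to factor $\Phi$ as $\Phi = F\circ\i$, with $\i:\s^m\to\r^{m+1}$ the canonical inclusion, and then exploit the composition law for tension fields together with the umbilicity relations already recorded. Throughout I would work with a local orthonormal frame $\{X_a\}_{a=1}^m$ tangent to $\s^m$ and write $\overline r$ for the (unit) outward position vector field of $\s^m$ in $\r^{m+1}$, so that $\{X_1,\dots,X_m,\overline r\}$ is an orthonormal frame of $\r^{m+1}$ along $\s^m$.

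The first milestone I would aim for is the intermediate formula
\begin{equation*}
  \tau(\Phi) = -\Dz F - k(m+k-1)\,\Phi .
\end{equation*}
To obtain it, apply $\tau(\Phi) = \d F(\tau(\i)) + \trace \nabla \d F(\d\i,\d\i)$. Since $\s^m$ is totally umbilical in $\r^{m+1}$ one has $\tau(\i) = -m\,\overline r$, and since $F$ is homogeneous of degree $k$, Euler's relation gives $\d F(\overline r) = kF = k\Phi$, whence $\d F(\tau(\i)) = -mk\,\Phi$. As both Euclidean spaces are flat, $\nabla\d F$ is the ordinary Hessian of $F$, so the tangential trace is the full flat Laplacian minus its radial part,
\begin{equation*}
  \trace \nabla\d F(\d\i,\d\i) = \sum_{a=1}^m \Hess F(X_a,X_a) = -\Dz F - \Hess F(\overline r,\overline r),
\end{equation*}
and differentiating Euler's relation once more yields $\Hess F(\overline r,\overline r) = k(k-1)\Phi$. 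Collecting the terms gives the displayed formula.

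Next I would pass from $\Phi$ to $\varphi$. Composing $\varphi$ with the embedding $\i:\s^n(r)\to\r^{n+1}$ and using the recorded second fundamental form $B(X,Y) = -\frac{1}{r^2}\langle X,Y\rangle\overline r$ gives $\tau(\Phi) = \tau(\varphi) - \frac{1}{r^2}|\d\Phi|^2\,\Phi$, that is,
\begin{equation*}
  \tau(\varphi) = \tau(\Phi) + \frac{1}{r^2}|\d\Phi|^2\,\Phi .
\end{equation*}
It then remains to express $|\d\Phi|^2$ through the flat quantity $|\dz F|^2$: splitting the flat differential into its tangential and radial parts along $\s^m$ and using $\dz F(\overline r)=k\Phi$ together with $|\Phi|^2=r^2$ gives $|\dz F|^2 = |\d\Phi|^2 + k^2 r^2$, hence $|\d\Phi|^2 = |\dz F|^2 - k^2 r^2$. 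Substituting this and the intermediate formula, and simplifying $k(m+k-1)+k^2 = k(m+2k-1)$, produces exactly \eqref{equ10}.

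The computation is essentially bookkeeping, so I do not expect a serious conceptual obstacle; the one point requiring genuine care rather than mere substitution is the twofold use of homogeneity, since Euler's identity must be applied both to the first derivatives and, after one further differentiation, to the radial Hessian term, and getting the two resulting constants right is precisely what makes the final coefficient come out as $k(m+2k-1)$. The remaining delicate point is consistency of sign conventions: \eqref{equ10} is consistent with the paper's choice $\Dz = -\trace \Hess$ (the positive flat Laplacian), which is exactly what produces the minus sign in front of $\Dz F$.
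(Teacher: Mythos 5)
Your proof is correct and follows essentially the same route as the paper: the composition law applied to $\Phi = F\circ\i$ with $\tau(\i) = -m\,\overline r$, Euler's identity used twice to evaluate the radial terms, the tangential/radial splitting giving $|\d\Phi|^2 = \left|\dz F\right|^2 - k^2r^2$, and the umbilicity relation $\tau(\varphi) = \tau(\Phi) + \frac{1}{r^2}|\d\Phi|^2\,\Phi$. The paper itself only cites Theorem \ref{th3} from \cite{BW03}, but its proof of the analogous Theorem \ref{th6} proceeds by exactly this scheme (Equations \eqref{ec-15}--\eqref{ec-21}), so your argument matches the paper's method with all signs and constants handled correctly.
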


For the special case of quadratic forms we have the following result.

\begin{proposition}{\cite{BW03}}\label{pr1}
Assume that $\varphi:\s^m\to\s^n$ is non constant quadratic form. Then the following relations are equivalent
\begin{itemize}
  \item[1)] $\tau(\varphi) = 0$,
  \item[2)] $\Dz F = 0$,
  \item[3)] $e(\varphi) = m+1$. 
\end{itemize}
\end{proposition}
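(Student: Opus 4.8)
The plan is to specialize the general tension-field formula of Theorem~\ref{th3} to the quadratic case $k=2$, $r=1$, to rewrite it in terms of the energy density, and then to exploit the crucial feature of quadratic forms: since each $F^\alpha$ is homogeneous of degree $2$, the Euclidean Laplacian $\Dz F$ is a \emph{constant} vector of $\r^{n+1}$.

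First I would record two preliminary identities on $\s^m$. Splitting the Euclidean differential $\dz F$ at a point $x\in\s^m$ into its radial and tangential parts, using Euler's relation for the radial derivative, $\dz F(x)=2F(x)=2\Phi$, together with $|\Phi|=1$ and the fact that the tangential part is $\d\varphi$, one obtains $|\dz F|^2 = 4 + 2e(\varphi)$. Substituting this into \eqref{equ10} (with $k=2$, $r=1$) yields
\begin{equation}\label{star}
\tau(\varphi) = -\Dz F + 2\left(e(\varphi)-m-1\right)\Phi.
\end{equation}
Since $\tau(\varphi)$ is tangent to $\s^n$ and hence orthogonal to $\Phi$, pairing \eqref{star} with $\Phi$ and using $|\Phi|^2=1$ gives
\begin{equation}\label{starstar}
\left\langle \Dz F, \Phi\right\rangle = 2\left(e(\varphi)-m-1\right),
\end{equation}
so that \eqref{star} reduces to $\tau(\varphi) = -\Dz F + \left\langle\Dz F,\Phi\right\rangle\Phi$; that is, $\tau(\varphi)$ is exactly the negative of the tangential projection of the constant vector $\Dz F$.

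From here several implications follow at once. Relation \eqref{starstar} makes $2)\Rightarrow 3)$ immediate, and the identity $\tau(\varphi)=-(\Dz F)^\top$ makes $2)\Rightarrow 1)$ immediate. For $1)\Rightarrow 2)$, if $\tau(\varphi)=0$ then the constant vector $c:=\Dz F$ satisfies $c=\left\langle c,\Phi\right\rangle\Phi$ pointwise; taking the inner product with $c$ gives $|c|^2=\left\langle c,\Phi\right\rangle^2$, so if $c\neq 0$ then $\Phi=\pm c/|c|$ would be constant on the connected sphere $\s^m$, contradicting that $\varphi$ is non-constant, whence $\Dz F=0$.

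The step I expect to be the real obstacle is $3)\Rightarrow 2)$, because \eqref{starstar} with $e(\varphi)=m+1$ only yields $\left\langle\Dz F,\Phi\right\rangle=0$, i.e. the constant vector $\Dz F$ is orthogonal to the image of $\Phi$ --- which need not span $\r^{n+1}$, so one cannot conclude $\Dz F=0$ pointwise. To close this gap I would integrate over $\s^m$. Writing $F^\alpha(x)=\sum_{i,j}a^\alpha_{ij}x_ix_j$ with $a^\alpha$ symmetric, the standard moments $\int_{\s^m}x_ix_j\,v_g = \frac{\vol(\s^m)}{m+1}\delta_{ij}$ give $\int_{\s^m}\Phi\,v_g = \frac{\vol(\s^m)}{2(m+1)}\Dz F$, since $\Dz F^\alpha = 2\,\trace a^\alpha$. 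Integrating the identity $\left\langle\Dz F,\Phi\right\rangle=0$ then produces $0=\big\langle \Dz F,\int_{\s^m}\Phi\,v_g\big\rangle = \frac{\vol(\s^m)}{2(m+1)}|\Dz F|^2$, which forces $\Dz F=0$. Together with the implications above this establishes $1)\Leftrightarrow 2)\Leftrightarrow 3)$.
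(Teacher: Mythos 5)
Your proof is correct, and all four implications you establish (1 $\Rightarrow$ 2, 2 $\Rightarrow$ 1, 2 $\Rightarrow$ 3, 3 $\Rightarrow$ 2) do yield the full equivalence. Note, however, that the paper itself offers no proof to compare against: Proposition \ref{pr1} is quoted from \cite{BW03}, accompanied only by the remark that the proof relies on the strict positivity of the energy density. The closest the paper comes to an argument is its own generalization, Proposition \ref{pr5}, whose proof homogenizes the identity $\Dz F = \left(\left|\dz F\right|^2 - k(m+2k-1)\right)\Phi$ from $\s^m$ to $\r^{m+1}$ and then invokes polynomial divisibility by $\left|\overline x\right|^2$ (no nonzero polynomial multiple of $\left|\overline x\right|^2$ is harmonic, and $F$ is of minimal degree in its class). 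Your route is genuinely different and more elementary: you exploit the feature special to $k=2$ that $\Dz F$ is a \emph{constant} vector $c\in\r^{n+1}$, so that $\tau(\varphi) = -c + \left\langle c,\Phi\right\rangle\Phi$, and then 1) $\Rightarrow$ 2) follows from non-constancy of $\varphi$ on the connected sphere, while 3) $\Rightarrow$ 2) follows from the spherical moment computation. What each approach buys: yours avoids any discussion of minimal degree (which for non-constant quadratic forms is automatic, since a lower-degree representative of $[F]$ would be constant) and of divisibility theory, and it uses only non-constancy rather than positivity of $e(\varphi)$; the paper's homogenization technique, by contrast, extends to forms of arbitrary degree $k$, where $\Dz F$ is no longer constant and your argument breaks down. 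Two small remarks: with the paper's sign convention $\Delta = -\trace\nabla^2$ one has $\Dz F^\alpha = -2\trace a^\alpha$, so your integral formula should read $\int_{\s^m}\Phi\, v_g = -\frac{\vol(\s^m)}{2(m+1)}\Dz F$ (immaterial, since you only need $\left|c\right|^2=0$); and in 3) $\Rightarrow$ 2) you could bypass integration altogether, since $\left\langle c, F\right\rangle$ is homogeneous of degree $2$ and vanishes on $\s^m$, hence vanishes identically on $\r^{m+1}$, and applying $\Dz$ gives $\left|c\right|^2 = 0$ directly.
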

\begin{remark}
We note that the proof of this result relies on the strict positivity of the energy density, i.e. $e(\varphi)>0$.
\end{remark}
Next, we recall several known results concerning harmonic forms of degree $k$.
\begin{proposition}{\cite{BW03}}\label{pr3}
Let $F:\r^{m+1}\rightarrow\r^{n+1}$ be a harmonic form of degree $k\in\mathbb{N}^*$. Suppose that $F$ restricts to the map $\varphi:\s^{m}\rightarrow\s^{n}$. Then $\varphi$ is harmonic with constant energy density $e(\varphi) = k(k + m -1)/2$, i.e. $\varphi$ is an eigenmap with $\nu = k(k + m -1)$.
\end{proposition}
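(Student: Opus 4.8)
The plan is to read everything off from the tension field formula of Theorem~\ref{th3}, using only the harmonicity of $F$ and the fact that $\tau(\varphi)$ is tangent to the target sphere. Since $F$ restricts to a map $\varphi$ into the unit sphere $\s^n=\s^n(1)$, we take $r=1$ throughout, and harmonicity of $F$ means precisely $\Dz F=0$. With this, Theorem~\ref{th3} collapses to
\begin{equation*}
\tau(\varphi)=\left(\left|\dz F\right|^2-k(m+2k-1)\right)\Phi .
\end{equation*}

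The crucial observation is that the right-hand side is a multiple of the position vector $\Phi$, hence normal to $\s^n$ along $\varphi$, whereas $\tau(\varphi)$ is a section of $\varphi^{-1}T\s^n$ and therefore tangent. A vector field that is simultaneously tangent and normal must vanish, so I would conclude at once that $\tau(\varphi)=0$, i.e. $\varphi$ is harmonic, and that the scalar coefficient vanishes identically on $\s^m$, giving $\left|\dz F\right|^2=k(m+2k-1)$.

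It then remains to translate the ambient quantity $\left|\dz F\right|^2$ into the energy density of $\varphi$. Here I would split the Euclidean differential of $F$ at a point $u\in\s^m$ into its radial and tangential parts. The tangential part is the differential of the restriction, contributing $|\d\varphi|^2=2e(\varphi)$; the radial part is governed by Euler's relation for homogeneous functions of degree $k$, namely $\nz_{\overline{r}}F=k\,\Phi$, which contributes $k^2|\Phi|^2=k^2$ since $|\Phi|=1$ on $\s^m$. This yields $\left|\dz F\right|^2=k^2+2e(\varphi)$ on $\s^m$, and combining with the identity from the tangency argument gives $2e(\varphi)=k(m+2k-1)-k^2=k(k+m-1)$, so $e(\varphi)=k(k+m-1)/2$ is constant. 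Finally, a harmonic map into the unit sphere with constant energy density satisfies $\tau(\Phi)=-2e(\varphi)\Phi$, which says precisely that $\varphi$ is an eigenmap with eigenvalue $\nu=2e(\varphi)=k(k+m-1)$.

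The only genuinely computational step — and the one I expect to be the main obstacle — is the radial/tangential decomposition of $\left|\dz F\right|^2$; once that is in hand, everything else is forced by the tangency of the tension field. An equivalent and perhaps cleaner route would bypass Theorem~\ref{th3} altogether: writing $F(\rho u)=\rho^k\Phi(u)$ and applying the polar form of the Euclidean Laplacian shows directly that $\Dz F=0$ is equivalent to $\tau(\Phi)=-k(k+m-1)\Phi$, after which the tangent/normal splitting of $\tau(\Phi)=\tau(\varphi)-|\d\varphi|^2\Phi$, using the second fundamental form $B(X,Y)=-\langle X,Y\rangle\Phi$ recorded in the excerpt, delivers both the harmonicity and the value of $e(\varphi)$ simultaneously.
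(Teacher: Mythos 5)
Your proposal is correct in every step. One structural remark first: the paper offers no proof of this proposition at all --- it is quoted from \cite{BW03} as a known result --- so there is no internal argument to compare against, and your derivation must be judged on its own merits; it passes. With $r=1$ and $\Dz F=0$, Theorem \ref{th3} indeed collapses to $\tau(\varphi)=\left(\left|\dz F\right|^2-k(m+2k-1)\right)\Phi$, and your tangency argument is sound: $\tau(\varphi)$ is a section of $\varphi^{-1}T\s^n$, hence pointwise orthogonal to the position vector $\Phi$, so pairing the identity with $\Phi$ (using $|\Phi|=1$) forces the scalar coefficient, and with it $\tau(\varphi)$, to vanish; this yields harmonicity and the constancy $\left|\dz F\right|^2=k(m+2k-1)$ on $\s^m$ simultaneously. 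The radial/tangential splitting that you flag as the main obstacle is not one: by Euler's relation $\dz F_{\overline x}(\overline x)=kF(\overline x)$, so on $\s^m$ one has $\left|\dz F\right|^2=k^2+\left|\d\varphi\right|^2$, which is precisely the identity $\left|\d\varphi\right|^2=\left|\dz F\right|^2-k^2$ that the paper itself quotes (from \cite{AOO23}) at the start of the proof of Proposition \ref{pr5}. Combining gives $2e(\varphi)=k(m+2k-1)-k^2=k(k+m-1)$, and the eigenmap conclusion follows, as you say, from $\tau(\Phi)=\tau(\varphi)-\left|\d\varphi\right|^2\Phi$ together with the paper's sign convention $\Delta=-\textnormal{trace}\,\nabla^2$, giving $\Delta\Phi=k(k+m-1)\Phi$. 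Your alternative route --- expanding the Euclidean Laplacian in polar form on $F(\rho u)=\rho^k\Phi(u)$ --- is essentially the classical proof in \cite{BW03} and has the advantage of not presupposing Theorem \ref{th3}; the route you actually take has the complementary advantage of staying entirely within the paper's stated toolkit, and the two are equivalent because the proof of Theorem \ref{th3} encodes the same polar computation.
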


\begin{proposition}{\cite{BW03}}\label{pr4}
Let $\varphi:\s^m\rightarrow\s^n$ be a harmonic with constant energy density $e(\varphi) = \alpha > 0$.
Then there exists a unique $k\in\mathbb{N}^*$ such that $\alpha = k(m+k-1)/2$ and there exists a unique vector valued function $F:\r^{m+1}\rightarrow\r^{n+1}$ such that each component is either a harmonic homogeneous polynomial of degree $k$, or the null polynomial, and $F$ restricts to $\varphi$.
\end{proposition}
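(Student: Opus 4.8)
The plan is to reduce the harmonicity of $\varphi$ to a componentwise eigenvalue equation for the Euclidean map $\Phi=\i\circ\varphi:\s^m\to\r^{n+1}$, and then to read off $k$ and reconstruct $F$ from the classical spectral theory of the Laplacian on $\s^m$ together with the theory of spherical harmonics.

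First I would exploit that the embedding $\i:\s^n\to\r^{n+1}$ is totally umbilical with second fundamental form $B(X,Y)=-\langle X,Y\rangle\Phi$ (taking $r=1$ in the formula recalled above). The composition law for the tension field then gives $\tau(\Phi)=\tau(\varphi)+\trace B(\d\varphi,\d\varphi)=\tau(\varphi)-|\d\Phi|^2\Phi$, where I have used the identification of $\d\varphi$ with $\d\Phi$. Since $\varphi$ is harmonic and $|\d\Phi|^2=2e(\varphi)=2\alpha$, this collapses to $\tau(\Phi)=-2\alpha\Phi$. Reading this relation componentwise, and using that for a real-valued function $\tau(\Phi)^a=\trace\nabla\d\Phi^a=-\Delta\Phi^a$ under the paper's sign convention, I conclude that every coordinate function $\Phi^a$ satisfies $\Delta\Phi^a=2\alpha\Phi^a$ on $\s^m$. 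Here the hypothesis that $e(\varphi)$ is constant is indispensable: it is precisely what turns this identity into a genuine eigenvalue equation with one and the same eigenvalue $2\alpha$ for all components.

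Next I would invoke the fact that the spectrum of the Laplacian on the unit sphere $\s^m$ is $\{k(k+m-1):k\in\n\}$, with the eigenspace attached to $k(k+m-1)$ consisting exactly of the restrictions to $\s^m$ of harmonic homogeneous polynomials of degree $k$ on $\r^{m+1}$. Because $k\mapsto k(k+m-1)$ is strictly increasing on $\n$ for $m\geq 1$, the eigenvalue $2\alpha$ pins down a unique $k\in\n$ with $\alpha=k(m+k-1)/2$; the assumption $\alpha>0$ rules out $k=0$, so in fact $k\in\n^*$. This establishes the uniqueness of $k$.

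Finally, each $\Phi^a$ lies in the eigenspace for $k(k+m-1)$ and is therefore the restriction of a harmonic homogeneous polynomial of degree $k$, unless $\Phi^a\equiv 0$, in which case I take the null polynomial. Concretely I would produce this polynomial as the homogeneous extension $F^a(x)=|x|^k\,\Phi^a\!\left(x/|x|\right)$, its uniqueness being forced by the fact that a degree-$k$ homogeneous polynomial is determined by its restriction to $\s^m$. Assembling the components into $F=(F^0,\dots,F^n):\r^{m+1}\to\r^{n+1}$ yields a form of degree $k$ whose components are harmonic or null and which satisfies $F|_{\s^m}=\Phi=\i\circ\varphi$, i.e. $F$ restricts to $\varphi$, and $F$ is unique. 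I expect the only delicate point to be the passage from abstract membership in the eigenspace to the explicit harmonic polynomial extension together with its uniqueness; once the spectral identification of eigenspaces with spherical harmonics is granted, the remainder is routine assembly.
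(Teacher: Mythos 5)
Your proof is correct; note that the paper gives no proof of this proposition at all---it is recalled verbatim from \cite{BW03}---and your argument is precisely the standard one behind that cited result: reduce, via the totally umbilical composition law, to the componentwise eigenvalue equation $\Delta\Phi^a=2\alpha\Phi^a$ on $\s^m$, then invoke the spectral theory of the sphere Laplacian (eigenvalues $k(k+m-1)$, eigenspaces the restrictions of degree-$k$ harmonic homogeneous polynomials) to pin down the unique $k$ and the unique homogeneous polynomial extension. The only point you leave tacit is that at least one component $\Phi^a$ is not identically zero---immediate from $|\Phi|\equiv 1$---which is what guarantees that $2\alpha$ genuinely lies in the spectrum and so determines $k$.
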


\begin{remark}
In general, the condition $\tau(\varphi) = 0$ does not imply that $\Dz F = 0$. The following example illustrates precisely how $\tau(\varphi)$ vanishes, but $\Dz F \neq 0$.
\end{remark} 
\begin{example}
Let $\tilde F:\r^3\to\r^3$
\begin{equation*}
  \tilde F(x,y,z)=\left( x^2+y^2+z^2\right)^p \cdot \left(x,y,z\right), \quad p\in \mathbb{N}, \ p\geq 1.
\end{equation*}
Consider its restriction $\varphi:\s^2\to\s^2$. Then, by direct computation in $\r^{m+1}$, by restricting to $\s^2$ we obtain
\begin{align*}
\Dz \tilde F=&-2p(2p+3)\cdot \tilde \Phi, \\
\left|\dz \tilde F\right|^2 =& \left(4p^2+4p+3\right).
\end{align*}
Now, from Equation \eqref{equ10} it follows directly that $\tau(\f) = 0$.

We note that although $\varphi$ is harmonic and it has constant energy density, $\tilde F$ is not harmonic, i.e. $\Dz \tilde F \neq 0$.

However, there exists a unique harmonic homogeneous polynomial map $F:\r^3\to\r^3$, $F(x,y,z) = (x,y,z)$ such that its restriction is $\varphi$, and 
$$
\tilde F(x,y,z) = \left( x^2+y^2+z^2\right)^p F(x,y,z).
$$
\end{example}

Now, in the set of the forms of degree $k$, $k\geq 1$, that take $\s^m$ to $\s^n$ we define the following relation 
$$
F \sim G \textnormal{  if and only if  } F_{\big | \s^m} = G_{\big | \s^m}.
$$
It is easy to observe that $\sim$ is an equivalence relation. We factorize the set of the forms of degree $k$ that take $\s^m$ to $\s^n$ to this relation and we obtain equivalence classes denoted $[F]$. 

We note that if $[F] = [G]$, then there exists an integer $p\geq 1$ such that we have on $\r^{m+1}$ either $F(\overline x) = |\overline x|^{2p}G(\overline x)$ or $G(\overline x) = |\overline x|^{2p}F(\overline x)$. Also, as no nonzero polynomial multiple of $|\overline x|^2$ is harmonic (see \cite{ABR13}), it follows that any class $[F]$ contains at most one harmonic form of degree $k$, that is of minimal degree in its class.
 
\begin{proposition}\label{pr5}
Let $F:\r^{m+1}\rightarrow\r^{n+1}$ be a form of degree $k$, of minimal degree in its class $[F]$, such that its restriction $\varphi:\s^m\rightarrow\s^n$ is harmonic. Then $\varphi$ has constant energy density $e(\varphi) = k(k + m -1)/2$ and $\Dz F = 0$.
\end{proposition}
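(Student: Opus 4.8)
The plan is to convert the harmonicity of $\varphi$ into a genuine polynomial identity on $\r^{m+1}$ and then exploit the irreducibility of $|\overline{x}|^2$. First I would use that $\varphi$ harmonic means $\tau(\varphi)=0$, so Theorem \ref{th3} (with $r=1$ and $\Phi=F_{|\s^m}$) gives, on $\s^m$,
$$
\Dz F=\left(|\dz F|^2-k(m+2k-1)\right)F.
$$
Since $F$ is homogeneous of degree $k$, the expressions $\Dz F$, $|\dz F|^2$ and $F$ are homogeneous of degrees $k-2$, $2k-2$ and $k$ respectively; multiplying each term by the appropriate power of $|\overline{x}|^2$ (which equals $1$ on $\s^m$) promotes this spherical relation to an identity between homogeneous polynomials of degree $3k-2$ that agree on $\s^m$, hence an identity on all of $\r^{m+1}$:
$$
|\overline{x}|^{2k}\,\Dz F=\left(|\dz F|^2-k(m+2k-1)|\overline{x}|^{2k-2}\right)F=:A\,F,
$$
where $A$ is a scalar polynomial of degree $2k-2$. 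The only thing to check here is that a homogeneous polynomial vanishing on $\s^m$ vanishes identically, which is immediate by radial scaling.

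Next I would bring in the minimality hypothesis. By the description of the equivalence classes preceding the statement, if $|\overline{x}|^2$ divided $F$, say $F=|\overline{x}|^2G$, then $G$ would be a form of degree $k-2$ with $G_{|\s^m}=F_{|\s^m}=\varphi$, contradicting the minimality of the degree of $F$ in $[F]$. Hence $|\overline{x}|^2$ does not divide $F$, so some component $F_a$ satisfies $|\overline{x}|^2\nmid F_a$. Reading the $a$-th component of the identity $|\overline{x}|^{2k}\,\Dz F=A\,F$, the element $(|\overline{x}|^2)^{k}$ divides $A\,F_a$; since $|\overline{x}|^2$ is irreducible in $\r[\overline{x}]$ (recall $m+1\ge 2$) and does not divide $F_a$, it follows that $(|\overline{x}|^2)^{k}$ divides $A$. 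But $\deg A=2k-2<2k$, which forces $A=0$, and therefore $\Dz F=0$.

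Finally, once $\Dz F=0$ is established, $F$ is a harmonic form of degree $k$ restricting to $\varphi$, so Proposition \ref{pr3} yields that $\varphi$ has constant energy density $e(\varphi)=k(k+m-1)/2$, completing the proof. I expect the main obstacle to be the passage from the spherical relation to the genuine polynomial identity together with the clean use of minimality: it is precisely the condition $|\overline{x}|^2\nmid F$ that prevents the scalar factor $A$ from being a nonzero multiple of a high power of $|\overline{x}|^2$, and the Example following Proposition \ref{pr1}, where the non-minimal $\tilde F=(x^2+y^2+z^2)^p(x,y,z)$ has $\Dz\tilde F\neq 0$, shows the conclusion genuinely fails without it. An alternative finish, once constant energy density is known, would be to invoke Proposition \ref{pr4} to produce the unique harmonic form in $[F]$ and identify it with $F$ by uniqueness of the minimal representative; the divisibility argument above seems more direct.
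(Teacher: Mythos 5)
Your proof is correct and is essentially the paper's own argument: harmonicity plus Theorem \eqref{th3} gives the spherical relation, homogenization promotes it to the polynomial identity $\left|\overline x\right|^{2k}\Dz F = A\,F$ on $\r^{m+1}$, and the minimality of $F$ in $[F]$ enters precisely through divisibility of $F$ by $\left|\overline x\right|^2$. The only difference is organizational: where the paper splits into three cases according to whether $A$ vanishes, is a nonzero constant multiple of $\left|\overline x\right|^{2k-2}$, or is neither, and derives a contradiction with minimality in the last two, you handle everything at once by noting that $\left|\overline x\right|^2$ is prime in $\r[x^1,\ldots,x^{m+1}]$ and that $\deg A = 2k-2 < 2k$ forces $A=0$ --- which also makes rigorous the paper's informal step about ``the largest power of $\left|\overline x\right|^2$'' that can divide the scalar factor.
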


\begin{proof}
It is known that (one may see, for example, \cite{AOO23})
$$
\left|\d\varphi\right|^2 = \left|\d\Phi\right|^2 = \left|\dz F\right|^2- k^2.
$$
Therefore, it follows that $\left|\dz F\right|^2$ is constant on $\s^m$  if and only if $\varphi$ has constant energy density. 

From Equation \eqref{equ10} it follows that on $\s^m$ we have
\begin{equation}\label{e2}
\Dz F = \left(\left|\dz F\right|^2-k(m+2k-1)\right) \Phi.
\end{equation}

If $\left|\dz F\right|^2-k(m+2k-1)=0$, then the conclusion follows immediately.

If $\left|\dz F\right|^2-k(m+2k-1)=c$, where $c$ is non-zero real constant, then on $\s^m$ we have
$$
\Dz F = c \cdot \Phi, 
$$
and further on $\r^{m+1}$ we have 
$$
\left|\overline x\right|^2\Dz F = c F,
$$
but this is a contradiction to the hypothesis that $F$ is of minimal degree in its class.

Now suppose that $\f$ does not have constant energy density, therefore $\left|\dz F\right|^2-k(m+2k-1)$ is not constant on $\s^m$.
By homogenizing Equation \eqref{e2} we obtain
\begin{equation}\label{e3}
\left|\overline x\right|^{2k}\Dz F = \left(\left|\dz F\right|^2-k(m+2k-1)\left|\overline x\right|^{2k-2}\right) F,
\end{equation}
that holds on $\r^{m+1}$.

As $F$ is a homogeneous polynomial vector function of degree $k$, $\Dz F$ is a homogeneous polynomial vector function of degree $k-2$,  $\left|\dz F\right|^2 -k(m+2k-1)\left|\overline x\right|^{2k-2}$ is a homogeneous polynomial function of degree $2k-2$, from Equation \eqref{e3} it follows that $\left|\overline x\right|^2$ divides $F$ because the largest power of $\left|\overline x\right|^2$ that can divide $\left|\dz F\right|^2 -k(m+2k-1)\left|\overline x\right|^{2k-2}$ is $\left|\overline x\right|^{2k-2}$. But this is a contradiction to the hypothesis that $F$ is minimal degree in its class.  Therefore, our assumption that and the energy density of $\varphi$ is not constant, is false.
\end{proof}
\begin{remark}
For more details on homogeneous polynomial maps and their decomposition one may see \cite{ABR13}.
\end{remark}
\begin{proposition}\label{pr2}
Let $F:\r^{m+1}\rightarrow\r^{n+1}$ be a quadratic form, such that it restricts to $\varphi:\s^m\rightarrow\s^n\left (r\right)$. Then, on $\s^m$ we have
\begin{equation*}
-2 \Dz\left(\left|\dz F\right|^2\right) - 2\left|\nz\dz F\right|^2 + \left|\Dz F\right|^2 = 4r^2(m+1)(m+3).
\end{equation*}
\end{proposition}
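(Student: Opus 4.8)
The plan is to reduce the identity to linear algebra, using that $F$ is quadratic so that all three quantities on the left-hand side are in fact constant on $\r^{m+1}$. Write $\overline x=(x_0,\dots,x_m)\in\r^{m+1}$ and let $F^a$, $a=1,\dots,n+1$, denote the components of $F$. Each $F^a$ is a homogeneous quadratic, so there is a unique symmetric matrix $A^a$ with $F^a(\overline x)=\langle A^a\overline x,\overline x\rangle$. Because $\nz$ is the flat connection, all derivatives are componentwise: $\partial_iF^a=2(A^a\overline x)_i$ and $\partial_i\partial_jF^a=2A^a_{ij}$. The key structural point is that the second derivatives are constant, which is exactly why the three terms below turn out to be independent of $\overline x$.

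First I would evaluate the three terms in terms of the two scalar invariants $S_1=\sum_a(\trace A^a)^2$ and $S_2=\sum_a\trace((A^a)^2)$. From $\Dz F^a=-\sum_i\partial_i\partial_iF^a=-2\trace A^a$ one gets $|\Dz F|^2=4S_1$; the Euclidean Hessian gives $|\nz\dz F|^2=\sum_{a,i,j}(2A^a_{ij})^2=4S_2$; and writing $|\dz F|^2=\sum_{a,i}(\partial_iF^a)^2=4\langle M\overline x,\overline x\rangle$ with $M=\sum_a(A^a)^2$ shows that $|\dz F|^2$ is itself a quadratic form, whence $\Dz|\dz F|^2=-8\trace M=-8S_2$. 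Substituting, the left-hand side collapses to $-2(-8S_2)-2(4S_2)+4S_1=4S_1+8S_2=4(S_1+2S_2)$.

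It then remains to prove $S_1+2S_2=r^2(m+1)(m+3)$, and here I would use the hypothesis that $F$ restricts to $\s^n(r)$. After homogenising this says $|F|^2=r^2|\overline x|^4$ identically on $\r^{m+1}$, an equality of quartic forms. I would apply the flat bi-Laplacian $\Dz^2=\Delta_0^2$ (where $\Delta_0=\sum_i\partial_i^2=-\Dz$) to both sides, which turns each quartic into a constant. On the right a direct computation gives $\Dz^2(r^2|\overline x|^4)=8r^2(m+1)(m+3)$. On the left I would use $\Delta_0|F|^2=2\langle\Delta_0F,F\rangle+2|\dz F|^2$ together with $\Delta_0F^a=2\trace A^a$ (constant) and the formula for $|\dz F|^2$ above; a second application of $\Delta_0$ then yields $\Dz^2|F|^2=8S_1+16S_2$. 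Equating the two constants gives $S_1+2S_2=r^2(m+1)(m+3)$, and combining with the previous paragraph finishes the proof.

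The argument is essentially mechanical, so the main obstacle is bookkeeping rather than ideas: one must keep the sign convention $\Dz=-\Delta_0$ consistent through both the single and the double applications, and track the numerical factors (a factor $2$ from the degree and a factor $4$ from squaring the first derivatives of a quadratic). The one genuinely conceptual step worth isolating is the passage from ``$\varphi$ takes values in $\s^n(r)$'' to the homogeneous quartic identity $|F|^2=r^2|\overline x|^4$; this is what makes the bi-Laplacian trick available and is the real source of the right-hand side $4r^2(m+1)(m+3)$.
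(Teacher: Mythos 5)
Your proof is correct, and it is worth noting that the paper itself states Proposition \ref{pr2} without proof (deferring to \cite{AOO23}, where the case $r=1$ is treated), so there is no in-paper argument to match line by line. Your core idea coincides with the standard one: homogenize the sphere condition to the quartic identity $|F|^2=r^2|\overline x|^4$ on $\r^{m+1}$ and apply the flat Laplacian twice. Where you differ is in the packaging. The argument in the style of \cite{AOO23} applies $\Dz$ twice directly to $|F|^2=r^2|\overline x|^4$, uses that $\Dz F$ is constant for a quadratic form (so $\Dz\Dz F=0$ and $\dz\,\Dz F=0$), and invokes the flat Bochner--Weitzenb\"ock identity, which for quadratic $F$ reduces to $\Dz\left(\left|\dz F\right|^2\right)=-2\left|\nz\dz F\right|^2$; this produces exactly the three-term combination in the statement. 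You instead diagonalize the whole problem into the two matrix invariants $S_1=\sum_a(\trace A^a)^2$ and $S_2=\sum_a\trace\left((A^a)^2\right)$, compute all three terms explicitly ($|\Dz F|^2=4S_1$, $\left|\nz\dz F\right|^2=4S_2$, $\Dz\left(\left|\dz F\right|^2\right)=-8S_2$, which in particular verifies the Bochner relation by hand), and then extract $S_1+2S_2=r^2(m+1)(m+3)$ from the bi-Laplacian of the constraint. Your route is more elementary (no Weitzenb\"ock formula needed, everything is finite-dimensional linear algebra) and makes transparent why all terms are constant on all of $\r^{m+1}$, not just on $\s^m$; the Bochner-based route is shorter and generalizes more readily to forms of higher degree $k$, where the Hessian is no longer constant and your matrix reduction is unavailable.
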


Following the computations from \cite{AOO23}, we obtain
\begin{theorem}\label{th4}
Let $F:\r^{m+1}\rightarrow\r^{n+1}$ be a form of degree $k$ such that it restricts to $\varphi:\s^m\rightarrow\s^n\left (r\right)$. The bitension field of the map $\varphi$ is given by
\begin{align}\label{ec-2}
\tau_2(\varphi) = & \Dz\Dz F + 2\left(mk+2k^2-3k-m+3-\frac{1}{r^2}\left|\dz F\right|^2\right)\Dz F \nonumber \\
               & + \frac{1}{r^2}\left( - 2\Dz\left(\left|\dz F\right|^2\right) - 2\left|\nz\dz F\right|^2 + \left|\Dz F\right|^2\right.\\
               & \left. - 2\left(2mk+6k^2-6k-m+3\right)\left|\dz F\right|^2 + \frac{2}{r^2}\left|\dz F\right|^4 + 4r^2k^2\left(m+2k-1\right)\right) \Phi \nonumber \\
               & + \frac{2}{r^2}\dz F\left(\stackrel{o}{\textnormal{grad}}\left(\left|\dz F\right|^2\right)\right), \nonumber
\end{align}
where $\dz$, $\nz$, $\Dz$ and $\stackrel{o}{\textnormal{grad}}$ denote operators that act on $\r^{m+1}$.
\end{theorem}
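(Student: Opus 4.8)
The plan is to take Theorem~\ref{th2} as the starting point: formula \eqref{ec-1} already expresses $\tau_2(\varphi)$ entirely through quantities attached to the Euclidean-valued map $\Phi=\i\circ\varphi:\s^m\to\r^{n+1}$, namely $\tau_2(\Phi)$, $\tau(\Phi)$, $|\d\Phi|^2$, $\Delta|\d\Phi|^2$, $\di\theta^\sharp$, $|\tau(\Phi)|^2$ and $\d\Phi(\grad|\d\Phi|^2)$. Since the target $\r^{n+1}$ is flat, the curvature term in $\tau_2(\Phi)$ vanishes, so $\tau_2(\Phi)=-\Delta\tau(\Phi)=\Delta\Delta\Phi$, all Laplacians acting componentwise. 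The whole proof thus reduces to rewriting each of these spherical quantities in terms of the Euclidean operators $\dz,\nz,\Dz,\stackrel{o}{\textnormal{grad}}$ acting on the homogeneous extension $F$. The bridge is the orthogonal splitting $T_{\overline x}\r^{m+1}=\r\,\overline x\oplus T_{\overline x}\s^m$ together with the standard relation, valid for any function $h$ homogeneous of degree $d$ on $\r^{m+1}$,
\begin{equation*}
\Delta\big(h_{|\s^m}\big)=\big(\Dz h\big)_{|\s^m}+d(d+m-1)\,h_{|\s^m}.
\end{equation*}

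First I would record the elementary identities coming from homogeneity and the Euler relation $\dz F(\overline x)=k\Phi$: on $\s^m$ one has $|\d\Phi|^2=|\dz F|^2-k^2r^2$, and applying the displayed formula to $F$ (degree $k$) and then to $\Dz F$ (degree $k-2$) gives
\begin{equation*}
\tau(\Phi)=-\Dz F-k(k+m-1)\Phi,\qquad \tau_2(\Phi)=\Dz\Dz F+2\big(k^2+km-3k-m+3\big)\Dz F+k^2(k+m-1)^2\Phi.
\end{equation*}
Decomposing the Euclidean gradient of the degree-$(2k-2)$ function $|\dz F|^2$ into radial and tangential parts and using $\dz F(\overline x)=k\Phi$ yields
\begin{equation*}
\d\Phi\big(\grad|\d\Phi|^2\big)=\dz F\big(\stackrel{o}{\textnormal{grad}}|\dz F|^2\big)-2k(k-1)|\dz F|^2\,\Phi,
\end{equation*}
which already produces the last term of \eqref{ec-2}; similarly $\Delta|\d\Phi|^2=\Dz(|\dz F|^2)+(2k-2)(2k+m-3)|\dz F|^2$ on $\s^m$.

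The main obstacle will be the terms $|\tau(\Phi)|^2$ and $\di\theta^\sharp$, both of which hide inner products that must be converted into the Euclidean scalars appearing in \eqref{ec-2}. For these I would differentiate the defining constraint $|F|^2=r^2|\overline x|^{2k}$: applying $\Dz$ and restricting to $\s^m$ yields $\langle\Dz F,\Phi\rangle=|\dz F|^2-r^2k(2k+m-1)$, which turns $|\tau(\Phi)|^2=|\Dz F|^2+2k(k+m-1)\langle\Dz F,\Phi\rangle+k^2(k+m-1)^2r^2$ into Euclidean data. For the divergence term I would use $\di\theta^\sharp=|\tau(\Phi)|^2+\langle\d\Phi,\d\tau(\Phi)\rangle$ (immediate from $\theta(X)=\langle\d\Phi(X),\tau(\Phi)\rangle$ and $\tau(\Phi)=\trace\nabla\d\Phi$), together with the Bochner-type identity obtained by applying $\Dz$ to $|\dz F|^2$,
\begin{equation*}
\langle\dz F,\dz(\Dz F)\rangle=\tfrac12\Dz\big(|\dz F|^2\big)+|\nz\dz F|^2.
\end{equation*}
This is exactly the step that introduces $\Dz(|\dz F|^2)$ and $|\nz\dz F|^2$; combined with the radial/tangential splitting of $\langle\d\Phi,\d\tau(\Phi)\rangle$ (again via $\partial_r F=k\Phi$, $\partial_r(\Dz F)=(k-2)\Dz F$) it expresses $\di\theta^\sharp$ through $\Dz(|\dz F|^2)$, $|\nz\dz F|^2$, $|\Dz F|^2$ and $|\dz F|^2$ alone.

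Finally I would substitute everything into \eqref{ec-1} and collect terms. The coefficient of $\Dz\Dz F$ is $1$; the coefficient of $\Dz F$ combines the $2(k^2+km-3k-m+3)$ from $\tau_2(\Phi)$ with the $2k^2-\tfrac{2}{r^2}|\dz F|^2$ coming from $\tfrac{2}{r^2}|\d\Phi|^2\tau(\Phi)$, reproducing $2\big(mk+2k^2-3k-m+3-\tfrac1{r^2}|\dz F|^2\big)$, while the gradient term emerges with coefficient $\tfrac{2}{r^2}$ as noted. The only laborious point is the scalar coefficient of $\Phi$: one gathers $k^2(k+m-1)^2$ from $\tau_2(\Phi)$, the bracketed scalars of \eqref{ec-1}, and the $\Phi$-parts $-\tfrac{2}{r^2}k(k+m-1)(|\dz F|^2-k^2r^2)$ and $-\tfrac{4k(k-1)}{r^2}|\dz F|^2$ from the last two terms, and then checks that the resulting polynomial in $m,k$ collapses to the stated coefficient $\tfrac1{r^2}\big(-2\Dz(|\dz F|^2)-2|\nz\dz F|^2+|\Dz F|^2-2(2mk+6k^2-6k-m+3)|\dz F|^2+\tfrac{2}{r^2}|\dz F|^4+4r^2k^2(m+2k-1)\big)$. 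For $k=2$, Proposition~\ref{pr2} gives an independent identity among $|\Dz F|^2$, $|\nz\dz F|^2$ and $\Dz(|\dz F|^2)$, which I would use as a consistency check on the bookkeeping.
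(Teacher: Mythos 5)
Your proposal is correct and follows essentially the same route the paper relies on (the computations of \cite{AOO23}, reproduced in the paper's proof of Theorem \eqref{th7}): reduce to Equation \eqref{ec-1}, convert each term via homogeneity and Euler relations, handle $\textnormal{div}\,\theta^\sharp$ with a geodesic frame plus the Weitzenb\"ock identity $\frac{1}{2}\Dz\left(\left|\dz F\right|^2\right)=\left\langle\dz F,\Dz\dz F\right\rangle+\left|\nz\dz F\right|^2$, and collect coefficients. The only cosmetic difference is that you obtain $\left\langle\Dz F,\Phi\right\rangle$ by applying $\Dz$ to the constraint $|F|^2=r^2\left|\overline x\right|^{2k}$, whereas the paper extracts the equivalent identity from the tangency of $\tau(\varphi)$; your stated intermediate formulas and the final coefficient bookkeeping all check out.
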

We note that for the case when the radius $r=1$, the last two result can also be found in \cite{AOO23}.

\begin{proposition}\label{pr6}
Let $F:\r^{m+1}\to\r^{n+1}$ be a harmonic form of degree $k$ such that it restricts to a map $\varphi:\s^m\to\s^n\left (r\right)$. Then, on $\r^{m+1}$ we have
\begin{align}\label{e4}
\Dz\left(\left|\dz F\right|^2\right) & =- 2r^2k(k-1)(m+2k-1)(m+2k-3),\\
\left|\nz \dz F\right|^2 &=r^2k(k-1)\left(m^2-4m+3+4k(m-2)+4k^2\right).\nonumber
\end{align}
\end{proposition}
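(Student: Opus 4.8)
The plan is to reduce everything to two elementary facts about the harmonic form $F$: that its squared norm is forced by homogeneity, and that the flat Laplacian annihilates each of its components. Since each component $F^A$ is a homogeneous polynomial of degree $k$ and $\f=F_{|\s^m}$ takes values in $\s^n\left(r\right)$, homogeneity gives $|F|^2=r^2|\overline x|^{2k}$ on all of $\r^{m+1}$, while harmonicity of $F$ means $\Dz F=0$, i.e. $\sum_i\partial_i\partial_i F^A=0$ for every component. First I would differentiate the identity $|F|^2=r^2|\overline x|^{2k}$ twice in $\r^{m+1}$ and then take the trace, so that the term $\sum_A F^A\,\Dz F^A$ drops out by harmonicity; this yields the closed form $|\dz F|^2=r^2k(m+2k-1)|\overline x|^{2k-2}$ on $\r^{m+1}$. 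This single formula is the engine of the whole argument.

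For the first identity I would simply apply the flat Laplacian to this closed form, using $\Dz(|\overline x|^{2a})=-2a(2a+m-1)|\overline x|^{2a-2}$ in $\r^{m+1}$ with $a=k-1$; restricting to $\s^m$ (where $|\overline x|=1$) then produces $\Dz(|\dz F|^2)=-2r^2k(k-1)(m+2k-1)(m+2k-3)$. For the second identity I would invoke the flat Bochner--Weitzenb\"ock identity $\tfrac12\Delta_{\mathrm{eucl}}|\nabla u|^2=|\nabla^2u|^2+\langle\nabla u,\nabla\Delta u\rangle$ applied to each harmonic component $u=F^A$; since $\Delta_{\mathrm{eucl}}F^A=0$, summing over $A$ gives $|\nz\dz F|^2=\tfrac12\Delta_{\mathrm{eucl}}|\dz F|^2=-\tfrac12\Dz(|\dz F|^2)=r^2k(k-1)(m+2k-1)(m+2k-3)$ on $\s^m$. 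A short algebraic check that $(m+2k-1)(m+2k-3)=m^2-4m+3+4k(m-2)+4k^2$ then rewrites this in the stated form. In particular the two displayed equalities are not independent: they are linked by $\Dz(|\dz F|^2)=-2|\nz\dz F|^2$, which is precisely the flat Bochner identity for the harmonic $F$.

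I do not expect a genuine obstacle here, only bookkeeping. The one point requiring real care is the sign and normalization convention: here $\Dz=-\sum_i\partial_i\partial_i$ (consistent with the sign convention fixed in the Conventions paragraph and with the Example, where $\Dz\tilde F=-2p(2p+3)\cdot\tilde\Phi$), so the flat Bochner formula must be transcribed with the matching sign, and the factors of $2$ must be tracked faithfully through the trace step. A second, purely interpretive, point is that the closed form $|\dz F|^2=r^2k(m+2k-1)|\overline x|^{2k-2}$ is genuinely a function on $\r^{m+1}$, and both right-hand sides in the statement carry a hidden factor $|\overline x|^{2k-4}$ away from the sphere; the stated constants are their restrictions to $\s^m$, exactly in the spirit of Proposition \ref{pr2}. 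Ensuring that harmonicity removes precisely the Hessian-trace term $\sum_A F^A\,\Dz F^A$ and nothing else is the only place where an error could realistically slip in.
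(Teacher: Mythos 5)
Your proof is correct, and all the constants check out. For the first identity you follow essentially the paper's own route: the paper quotes Proposition \ref{pr3} to get $\left|\dz F\right|^2=kr^2(m+2k-1)$ on $\s^m$, homogenizes this to $\left|\dz F\right|^2=kr^2(m+2k-1)\left|\overline x\right|^{2(k-1)}$ on $\r^{m+1}$, and then applies the flat Laplacian, exactly as you do (your derivation of the closed form by differentiating $|F|^2=r^2\left|\overline x\right|^{2k}$ twice is just an inlined proof of the cited fact). For the second identity, however, you take a genuinely different and more economical route. The paper argues that $F$ harmonic implies $\varphi$ harmonic, hence biharmonic, so $\tau_2(\varphi)=0$; it then substitutes all previously computed quantities into the bitension formula \eqref{ec-2} of Theorem \ref{th4} and solves that equation for $\left|\nz\dz F\right|^2$. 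You instead apply the flat componentwise Bochner identity to the harmonic components of $F$, which yields the clean link $\Dz\left(\left|\dz F\right|^2\right)=-2\left|\nz\dz F\right|^2$ and makes the second identity an immediate consequence of the first. Your route is more self-contained (it does not rest on the heavy formula \eqref{ec-2}) and it makes explicit a structural fact the paper leaves implicit, namely that the two displayed identities are not independent. Two further points in your write-up are well taken: first, with the convention $\Dz=-\trace\nz^2$ the correct Weitzenb\"ock identity is $\tfrac12\Dz\left(\left|\dz F\right|^2\right)=\left\langle\dz F,\Dz\dz F\right\rangle-\left|\nz\dz F\right|^2$, and your transcription has the right sign (note that the version written inside the proof of Theorem \ref{th7}, with a plus sign in front of $\left|\nz\dz F\right|^2$, is inconsistent with this convention and with Proposition \ref{pr6} itself); second, your observation that both right-hand sides carry a hidden factor $\left|\overline x\right|^{2k-4}$ away from the sphere, so that the stated constants are restrictions to $\s^m$, accurately reflects how the paper itself derives them.
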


\begin{proof}
If $F$ is harmonic, then from Proposition \eqref{pr3} it follows that $\varphi$ is harmonic and on $\s^m$ we have
$$
\left|\dz F\right|^2=kr^2(m+2k-1),
$$
therefore on $\r^{m+1}$ we have 
$$
\left|\dz F\right|^2=kr^2(m+2k-1)\left|\overline x\right|^{2k-2}.
$$
Then, by direct computation we obtain on $\s^m$
\begin{equation}\label{e5}
\Dz\left(\left|\dz F\right|^2\right)  =- 2r^2k(k-1)(m+2k-1)(m+2k-3).
\end{equation}
As $F$ is harmonic, $\varphi$ is harmonic, thus biharmonic, and from Equation \eqref{ec-2} we obtain by direct computations on $\s^m$
$$
\left|\nz \dz F\right|^2 =r^2k(k-1)\left(m^2-4m+3+4k(m-2)+4k^2\right).
$$
\end{proof}

\section{New construction method of proper biharmonic maps}

We begin this section with an application to Theorem \eqref{th4}.
\begin{theorem}\label{th12}
Let $G:\r^{m+1}\to\r^{n+1}$ be a harmonic form of degree $k$ such that its restriction $\psi:\s^m\to\s^n$ is not constant. Let $F:\r^{m+1}\to\r^{(m+1)(n+1)}$ be a form of degree $k+1$ defined by
\begin{equation*}
F(\overline x) = \left(x^1G(\overline x), x^2G(\overline x), \ldots, x^{m+1}G(\overline x)\right).
\end{equation*}
Then, its restriction $\varphi:\s^m\to\s^{(m+1)(n+1)-1}$ is proper biharmonic if and only if $m=1$.
\end{theorem}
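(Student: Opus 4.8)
The plan is to compute both the tension field and the bitension field of $\f$ directly from Theorems \eqref{th3} and \eqref{th4}, and then to show that they are proportional, with a proportionality factor that vanishes exactly when $m=1$.

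First I would fix the component notation $F^{i,a}(\overline{x})=x^iG^a(\overline{x})$, with $i\in\{1,\dots,m+1\}$ and $a\in\{0,\dots,n\}$ (so that on $\s^m$ one checks $|F|^2=|\overline{x}|^2|G|^2=1$, confirming the target $\s^{(m+1)(n+1)-1}$), and record the two facts that make the computation collapse: since $\p$ takes values in the unit sphere and $G$ is homogeneous of degree $k$, one has $|G|^2=|\overline{x}|^{2k}$ on $\r^{m+1}$, and since $G$ is harmonic, Proposition \eqref{pr3} gives $|\dz G|^2=k(m+2k-1)|\overline{x}|^{2k-2}$. A short computation using $\Dz G=0$ and Euler's relation $\sum_i x^i\partial_iG^a=kG^a$ then yields $(\Dz F)^{i,a}=-2\partial_iG^a$, hence $\Dz\Dz F=0$ (because $\Dz$ commutes with $\partial_i$ and $G$ is harmonic), together with $|\dz F|^2=C\,|\overline{x}|^{2k}$ for the constant $C=(m+1)+k+km+2k^2$.

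Next I feed these into \eqref{equ10}, evaluated for a form of degree $k+1$ with $r=1$. The coefficient of $\Phi$ becomes $C-(k+1)(m+2k+1)=-2k$, so $\tau(\f)=-\Dz F-2k\,\Phi$, i.e. $\tau(\f)^{i,a}=2(\partial_iG^a-kx^iG^a)=2V^{i,a}$, where I set $V^{i,a}:=\partial_iG^a-kx^iG^a$. I then turn to \eqref{ec-2}, again with degree $k+1$ and $r=1$. The coefficient of $\Dz F$ simplifies to $2(1-m)$, and since $\Dz\Dz F=0$ one is left with $\tau_2(\f)=2(1-m)\Dz F+P\,\Phi$ for a single scalar $P$; here the final term of \eqref{ec-2} contributes to $P\,\Phi$ because $\stackrel{o}{\grad}(|\dz F|^2)$ is radial (a multiple of $\overline{x}$) and $\dz F(\overline{x})=(k+1)\Phi$ on $\s^m$, while every remaining ingredient is constant on $\s^m$. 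Rather than expand $P$ through $|\nz\dz F|^2$, $\Dz(|\dz F|^2)$ and $|\dz F|^4$ (which can be done via Proposition \eqref{pr6}), I would determine $P$ from the fact that the bitension field is tangent to the target sphere: pairing with $\Phi$ and using $\langle\Dz F,\Phi\rangle=-2k$ and $|\Phi|^2=1$ on $\s^m$ forces $P=-4k(m-1)$. This gives the clean identity $\tau_2(\f)=4(m-1)V=2(m-1)\,\tau(\f)$.

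The decisive step is then the claim that $V\not\equiv0$. Suppose $V=0$ on $\s^m$, i.e. $\partial_iG^a=kx^iG^a$ there for all $i,a$. Comparing degrees and homogenizing gives $|\overline{x}|^2\partial_iG^a=kx^iG^a$ on $\r^{m+1}$; applying $\sum_i\partial_i$ and using $\Dz G=0$ together with Euler's relation on each side yields $2kG^a=k(m+k+1)G^a$, whence $(m+k-1)G^a=0$ for every $a$. Since $G\not\equiv0$ and $m\ge1$, $k\ge1$, this forces $m+k=1$, a contradiction. Therefore $V\ne0$, so $\tau(\f)=2V\ne0$ and $\f$ is never harmonic, while $\tau_2(\f)=2(m-1)\tau(\f)$ vanishes precisely when $m=1$; hence $\f$ is proper biharmonic if and only if $m=1$. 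I expect the main obstacle to be the bookkeeping in \eqref{ec-2}: extracting the coefficient of $\Dz F$ and justifying the tangency shortcut for $P$ (or, alternatively, carrying out the full expansion of the $\Phi$-coefficient and verifying it equals $-4k(m-1)$). Once the identity $\tau_2(\f)=2(m-1)\tau(\f)$ is in hand, the non-vanishing of $V$ is the only genuinely new ingredient, and it reduces to the short homogenization argument above.
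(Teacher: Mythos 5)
Your proposal is correct, and although it shares the paper's computational skeleton, it departs from the paper's own proof at the decisive step in a way worth recording. The preliminary computations agree: $\Dz F=-2\left(\partial_1 G,\dots,\partial_{m+1}G\right)$, $\Dz\Dz F=0$, $\left|\dz F\right|^2=\left(m+k+1+km+2k^2\right)\left|\overline x\right|^{2k}$, the coefficient $2(1-m)$ of $\Dz F$ in \eqref{ec-2}, and $\tau(\f)=-\Dz F-2k\Phi$. From there the paper leaves the $\Phi$-coefficient partly unexpanded (it still contains $\left|\nz\dz F\right|^2$) and, for $m\neq1$, argues by squaring: taking norms in \eqref{ec-10} determines that coefficient only up to sign, and each sign is then excluded by an Euler-relation contradiction; the orthogonality of $\tau_2(\f)$ to $\Phi$ is used only implicitly, and only in the case $m=1$. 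You instead invoke that orthogonality for every $m$ --- legitimate, since $\tau_2(\f)$ is a section of $\f^{-1}T\s^{(m+1)(n+1)-1}$ and the position vector is normal to the sphere --- together with $\left\langle\Dz F,\Phi\right\rangle=-2k$, which pins the coefficient down exactly and yields the clean identity $\tau_2(\f)=2(m-1)\tau(\f)$. This buys a shorter and more robust proof: no evaluation of $\left|\nz\dz F\right|^2$, no sign case analysis, and both implications of the equivalence at once, given your separate proof that $\tau(\f)\neq0$ (your homogenization-plus-divergence argument forcing $(m+k-1)G\equiv0$ is valid; the paper instead derives $\left|\dz G\right|^2=k^2$ on $\s^m$ and contradicts \eqref{ec-3} --- either works). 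Two minor remarks. First, your passing claim that every remaining ingredient of the $\Phi$-coefficient is constant on $\s^m$ is unnecessary, and for $\left|\nz\dz F\right|^2$ not obvious a priori; this is harmless because the tangency computation determines the coefficient pointwise. Second, your identity is in fact sharper than \eqref{ec-9} as displayed: the explicit constant there contains a slip (for $m=1$ it would give $\left|\nz\dz F\right|^2=4\left(1+k+k^2-k^3\right)$, whereas the direct computation in Example \eqref{ex1} gives $8k^2+4k^4$), which does not damage the paper's argument only because the coefficient is afterwards treated as an unknown --- precisely the spirit your tangency argument makes systematic.
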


\begin{proof}
We aim to compute the terms from the right hand side of Equation \eqref{ec-2} in terms of $G$.

Let $\i:\s^{(m+1)(n+1)-1}\to \r ^{(m+1)(n+1)}$ be the standard isometric embedding of the unit Euclidean sphere into Euclidean space and consider the composition map
$$
\Phi = \i \circ \varphi:\s^{(m+1)(n+1)-1}\to \r ^{(m+1)(n+1)}.
$$

As the vector function $G$ is harmonic, i.e. $\Dz G = 0$, it follows from Theorem \eqref{th3} that $\psi$ is harmonic and 
\begin{equation*}
\left|\dz G\right|^2 = k(m+2k-1), \quad \textnormal{ on } \s^m,
\end{equation*} 
and further
\begin{equation}\label{ec-3}
\left|\dz G\right|^2 = k(m+2k-1)\left|\overline x\right|^{2(k-1)}, \quad \textnormal{ on } \r^{m+1}.
\end{equation} 
Next, we compute $\left|\dz F\right|^2$  and $\Dz F$. On $\r^{m+1}$ we have
\begin{align}\label{ec-3.1}
\frac{\partial F}{\partial x^1}\left(\overline x\right) &= \left(G\left(\overline x\right) + x^1 \frac{\partial G}{\partial x^1}\left(\overline x\right), x^2 \frac{\partial G}{\partial x^1}\left(\overline x\right), \ldots, x^{m+1}\frac{\partial G}{\partial x^1}\left(\overline x\right)\right) \nonumber\\

\frac{\partial F}{\partial x^2}\left(\overline x\right) &= \left( x^1 \frac{\partial G}{\partial x^2}\left(\overline x\right), G\left(\overline x\right) + x^2 \frac{\partial G}{\partial x^2}\left(\overline x\right), \ldots, x^{m+1}\frac{\partial G}{\partial x^2}\left(\overline x\right)\right) \\
\ldots & \nonumber\\
\frac{\partial F}{\partial x^{m+1}}\left(\overline x\right) &= \left( x^1 \frac{\partial G}{\partial x^{m+1}}\left(\overline x\right),  x^2 \frac{\partial G}{\partial x^{m+1}}\left(\overline x\right), \ldots, G\left(\overline x\right) + x^{m+1}\frac{\partial G}{\partial x^{m+1}}\left(\overline x\right)\right).\nonumber
\end{align}
Then, on $\r^{m+1}$ we have
\begin{align*}
\left|\dz F\right|^2_{\overline x} =& (m+1) \left|G\left(\overline x\right)\right|^2 + \left|\overline x\right|^2 \left|\dz G\right|^2_{\overline x} + 2\left\langle G\left(\overline x\right), \dz G_{\overline x}\left(\overline x\right)\right\rangle \\
=& (m+1) \left|G\left(\overline x\right)\right|^2 + \left|\overline x\right|^2 \left|\dz G\right|^2_{\overline x} + 2\left\langle G\left(\overline x\right), k G\left(\overline x\right)\right\rangle \\
=& (m+2k+1) \left|G\left(\overline x\right)\right|^2 + \left|\overline x\right|^2 \left|\dz G\right|^2_{\overline x}.
\end{align*}
Therefore, as $G$ is a $k$-form, i.e. $\left|G\left(\overline x\right)\right|^2 = \left|\overline x\right|^{2k}$, and taking into account Equation \eqref{ec-3} we obtain on $\r^{m+1}$
\begin{equation}\label{ec-4}
\left|\dz F\right|^2_{\overline x} = \left(m+2k+1+k(m+2k-1)\right)\left|\overline x\right|^{2k}.
\end{equation}
Next, from Equations \eqref{ec-3.1} we obtain on $\r^{m+1}$
\begin{align}\label{ec-4.1}
\frac{\partial F}{\partial \left(x^1\right)^2}\left(\overline x\right) &= \left(2\frac{\partial G}{\partial x^1}\left(\overline x\right) + x^1 \frac{\partial G}{\partial \left(x^1\right)^2}\left(\overline x\right), x^2 \frac{\partial G}{\partial \left(x^1\right)^2}\left(\overline x\right), \ldots, x^{m+1}\frac{\partial G}{\partial \left(x^1\right)^2}\left(\overline x\right)\right) \nonumber\\

\frac{\partial F}{\partial \left(x^2\right)^2}\left(\overline x\right) &= \left( x^1 \frac{\partial G}{\partial \left(x^2\right)^2}\left(\overline x\right), 2\frac{\partial G}{\partial x^2}\left(\overline x\right) + x^2 \frac{\partial G}{\partial \left(x^2\right)^2}\left(\overline x\right), \ldots, x^{m+1}\frac{\partial G}{\partial \left(x^2\right)^2}\left(\overline x\right)\right) \\

\ldots &\nonumber \\

\frac{\partial F}{\partial \left(x^{m+1}\right)^2}\left(\overline x\right) &= \left( x^1 \frac{\partial G}{\partial \left(x^{m+1}\right)^2}\left(\overline x\right),  x^2 \frac{\partial G}{\partial \left(x^{m+1}\right)^2}\left(\overline x\right), \ldots, 2\frac{\partial G}{\partial x^{m+1}}\left(\overline x\right) + x^{m+1} \frac{\partial G}{\partial \left(x^{m+1}\right)^2}\left(\overline x\right)\right).\nonumber
\end{align}
As $G$ is harmonic, by summing Equations \eqref{ec-4.1} we obtain on $\r^{m+1}$ 
\begin{equation}\label{ec-5}
\Dz F  = -2\left(\frac{\partial G}{\partial x^1}\left(\overline x\right), \frac{\partial G}{\partial x^2}\left(\overline x\right), \ldots, \frac{\partial G}{\partial x^{m+1}}\left(\overline x\right)\right), 
\end{equation}
and further, as $G$ is harmonic, 
\begin{equation}\label{ec-6}
\Dz \Dz F = 0. 
\end{equation}
From Equation \eqref{ec-4} it follows by direct computation that on $\s^m$ we have
\begin{equation}\label{ec-7}
\Dz \left(\left|\dz F\right|^2\right) = -2k(m+2k-1)\left(m+2k+1+k(m+2k-1)\right).
\end{equation}
Now, we compute $\dz F\left(\stackrel{o}{\textnormal{grad}}\left(\left|\dz F\right|^2\right)\right)$. From Equation \eqref{ec-4} it follows that on $\r^{m+1}$ we have
\begin{equation*}
\stackrel{o}{\textnormal{grad}}\left(\left|\dz F\right|^2\right) = \left(m+2k+1+k(m+2k-1)\right) 2k\left|\overline x\right|^{2(k-1)}\overline x.
\end{equation*}
Thus, on $\s^m$ we have
\begin{align}\label{ec-8}
\dz F\left(\stackrel{o}{\textnormal{grad}}\left(\left|\dz F\right|^2\right)\right) = & 2k\left(m+2k+1+k(m+2k-1)\right) \dz F_{\overline x}\left(\overline x\right) \nonumber \\
& = 2k(k+1)\left(m+2k+1+k(m+2k-1)\right)\Phi.
\end{align}
Now, we replace Equations \eqref{ec-4}, ... , \eqref{ec-8} in Equation \eqref{ec-2} and we obtain by direct calculations 
\begin{equation}\label{ec-9}
\tau_2(\varphi) = 2(1-m)\Dz F + \left\{-2\left|\nz\dz F\right|^2 +4\left(1+2k-2k^3-k^2(m-3)+3\right) \right\} \Phi.
\end{equation}

If $m=1$, Equation \eqref{ec-9} becomes 
$$
\tau_2(\varphi) = \left\{-2\left|\nz\dz F\right|^2 +8\left(1+k+k^2-k^3\right) \right\} \Phi.
$$
Thus, 
$$
\left|\nz\dz F\right|^2 = 4\left(1+k+k^2-k^3\right)
$$
and
$$
\tau_2(\varphi) = 0.
$$

If $\tau_2(\varphi) = 0$, then from Equation \eqref{ec-9} on $\s^m$ we have
\begin{equation}\label{ec-10}
2(1-m)\Dz F = - \left\{-2\left|\nz\dz F\right|^2 +4\left(1+2k-2k^3-k^2(m-3)+3\right) \right\} \Phi.
\end{equation}
It follows that on $\s^m$
$$
4(1-m)^2\left|\Dz F\right|^2 = \left(-2\left|\nz\dz F\right|^2 +4\left(1+2k-2k^3-k^2(m-3)+3\right) \right)^2,
$$
and further, using Equations \eqref{ec-4} and \eqref{ec-5}, we obtain on $\s^m$
$$
16k(m+2k-1)(1-m)^2 = \left(-2\left|\nz\dz F\right|^2 +4\left(1+2k-2k^3-k^2(m-3)+3\right) \right)^2.
$$
Therefore, the coefficient of $\Phi$ from Equation \eqref{ec-10} becomes either 
$$
4(m-1)\sqrt{k(m+2k-1)} \quad \textnormal{ or } \quad -4(m-1)\sqrt{k(m+2k-1)}.
$$ 

First, we assume that the coefficient of $\Phi$ is $4(m-1)\sqrt{k(m+2k-1)}$. Then Equation \eqref{ec-10}becomes
\begin{equation*}
2(m-1)\Dz F =  4(m-1)\sqrt{k(m+2k-1)} \Phi.
\end{equation*} 
We assume $m\neq 1$. Then, on $\s^{(m+1)(n-1)}$ we have $\Dz F =  2\sqrt{k(m+2k-1)} \Phi$. This is equivalent to
\begin{align*}
-\frac{\partial G}{\partial x^1}\left(\overline x\right) &= \sqrt{k(m+2k-1)} x^1 G\left(\overline x\right), \\
-\frac{\partial G}{\partial x^2}\left(\overline x\right) &= \sqrt{k(m+2k-1)} x^2 G\left(\overline x\right), \\
\ldots& \\
-\frac{\partial G}{\partial x^{m+1}}\left(\overline x\right) &= \sqrt{k(m+2k-1)} x^{m+1} G\left(\overline x\right). \\
\end{align*}
It follows that 
\begin{align*}
-x^1\frac{\partial G}{\partial x^1}\left(\overline x\right) &= \sqrt{k(m+2k-1)} \left(x^1\right)^2 G\left(\overline x\right), \\
-x^2\frac{\partial G}{\partial x^2}\left(\overline x\right) &= \sqrt{k(m+2k-1)} \left(x^2\right)^2 G\left(\overline x\right), \\
\ldots & \\
-x^{m+1}\frac{\partial G}{\partial x^{m+1}}\left(\overline x\right) &= \sqrt{k(m+2k-1)} \left(x^{m+1}\right)^2 G\left(\overline x\right). \\
\end{align*}
and by summing the above equations we obtain on $\s^m$
\begin{equation*}
-\dz G_{\overline x}\left(\overline x\right) = \sqrt{k(m+2k-1)} G\left(\overline x\right),
\end{equation*}
that is further equivalent to 
\begin{equation*}
-k G\left(\overline x\right) = \sqrt{k(m+2k-1)} G\left(\overline x\right),
\end{equation*}
that is false.

Now, we assume that the coefficient of $\Phi$ is $4(m-1)\sqrt{k(m+2k-1)}$. Similar, we obtain on $\s^m$
\begin{equation*}
k G\left(\overline x\right) = \sqrt{k(m+2k-1)} G\left(\overline x\right),
\end{equation*}
that is also false.

Therefore, the supposition $m\neq 1$ leads only to false results. In conclusion, $\tau_2(\varphi) = 0$ implies $m=1$.

Now, for $m=1$ from Equation \eqref{ec-9} it follows directly that $\tau_2(\varphi) = 0$. Concerning the harmonicity of $\varphi$, we replace Equations \eqref{ec-4} and \eqref{ec-5} in Equation \eqref{equ10} and we obtain
\begin{align*}
\tau(\varphi) =& -\Dz F -2k \Phi\\
              =& 2\left(\frac{\partial G}{\partial x^1}, \frac{\partial G}{\partial x^2}, \ldots, \frac{\partial G}{\partial x^{m+1}}\right) -2k\left(x^1 G, x^2 G, \ldots, x^{m+1} G\right).
\end{align*}
If we assume that $\tau(\varphi) = 0$, from the last equation we obtain on $\s^m$
$$
\left|\dz G\right|^2 = k^2,
$$
that is a contradiction to Equation \eqref{ec-3}.

In conclusion, the map $\varphi$ is proper biharmonic if and only if $m=1$.
\end{proof}

\begin{example}\label{ex1} To confirm the validity of Theorem \eqref{th12}, we construct and study the biharmonicity of a special class of closed curves in $\s^3$. While the construction satisfies the hypotheses of the theorem, we independently verify its biharmonicity by computing the tension and bitension fields. 

We consider a harmonic form of degree $k$ 
$$
G:\r^2\to\r^2.
$$
As we already know,  an orthogonal basis, with respect to the usual product, for the linear space of homogeneous harmonic polynomials of degree $k$ in 2 variables is given in polar coordinates by $r^k\cos(k\theta)$ and $r^k\sin(k\theta)$ (see, for example, \cite{ABR13}). Therefore, taking a complex number $z=x+i y$, an orthogonal basis, with respect to the usual product, for the linear space of homogeneous harmonic polynomials of degree $k$ in 2 variables is given by \begin{align*}
P_k(x,y) &= \textnormal{Re}\left(z^k\right),\\
Q_k(x,y) &= \textnormal{Im}\left(z^k\right).
\end{align*}
Therefore, without loss of generality we can assume that 
$$
G(x,y) = \left( P_k(x,y), Q_k(x,y)\right).
$$
Now we define 
$$
F:\r^2\to\r^4, \quad F(x,y)=\left(xP_k(x,y), xQ_k(x,y), yP_k(x,y),y Q_k(x,y)\right),
$$
and we consider its restriction $\varphi:\s^1\to\s^3$.
For this particular case, by direct computations we obtain on $\r^2$
\begin{align}\label{ec-11}
\left|\dz G\right|^2 =& \ 2k^2\left(x^2+y^2\right)^{k-1},\\
\left|\dz F\right|^2 =& \ 2\left(k^2+k+1\right)\left(x^2+y^2\right)^{k},\nonumber
\end{align}
and therefore $\varphi$ is parameterised by arc length. Further on $\r^2$ we have
\begin{align}\label{ec-11.1}
\Dz F =& -2\left(\frac{\partial G}{\partial x}, \frac{\partial G}{\partial y}\right),\nonumber\\
\left|\Dz F\right|^2 =& \ 8k^2 \left(x^2+y^2\right)^{k-1},\\
\Dz \Dz F =& \ 0, \nonumber\\
\Dz\left(\left|\dz F\right|^2\right) = & -8k^2\left(k^2+k+1\right)\left(x^2+y^2\right)^{k-1},\nonumber 
\end{align}
and on $\s^1$
\begin{align*}
\left|\nz \dz F\right|^2 = & 8k^2 + 4k^4,\\ 
\dz F\left(\stackrel{o}{\textnormal{grad}}\left(\left|\dz F\right|^2\right)\right) = &  4k(k+1)\left(1+k+k^2\right)\Phi,
\end{align*}
that checks with the general case computed in the proof of the previous theorem.

Now, replacing in Equation \eqref{equ10} and \eqref{ec-1}, we obtain 
\begin{align*}
\tau(\varphi) =& -\Dz F - 2k \Phi,\\
\tau_2(\varphi) =& 0. 
\end{align*}
If $\tau(\f)  = 0$, then on $\s^1$ we have $-\Dz F = 2k F$, and further $\left|\Dz F\right| = 4k^2$ that is a contradiction to the second equation from System \eqref{ec-11.1}.
Therefore we obtain that $\varphi$ is proper biharmonic.
\end{example}

Moreover, it is not difficult to see that by acting on $F$ with the isometry
\begin{align*}
T = \frac{1}{\sqrt{2}}\begin{bmatrix}
             1 & 0 & 0 & 1 \\
             0 & 1 & 1 & 0 \\
             0 & 1 & -1 & 0 \\
             1 & 0 & 0 & -1
           \end{bmatrix},
\end{align*}
we obtain a new vector function $\tilde F = \left(\tilde F_1, \tilde F_2, \tilde F_3, \tilde F_4\right)$ such that
\begin{align*}
\tilde F_1 + i \tilde F_3 =& \frac{1}{\sqrt{2}}\overline z \cdot z^k = \frac{1}{\sqrt{2}}\left|z\right|^2\cdot z^{k-1}, \\
\tilde F_4 + i \tilde F_2 =& \frac{1}{\sqrt{2}}z \cdot z^k = \frac{1}{\sqrt{2}}z^{k+1}.
\end{align*}
Therefore, after rearranging the components of $\tilde F$, it takes the form
$$
\tilde F(z) = \frac{1}{\sqrt{2}}\left(|z|^2 z^{k-1}, z^{k+1}\right).
$$
On $S^1$, 
$$
\tilde \Phi(\cos t, \sin t) = \left(\cos \left(2(k-1)t\right),\sin \left(2(k-1)t\right),\cos \left(2(k+1)t\right),\sin \left(2(k+1)t\right)\right).
$$
\begin{remark}
The previous examples is part of a larger family of biharmonic curves in $\s^3$, first time discovered in \cite{CMO01} where the biharmonic curves in $\s^3$ were classified. 

\end{remark}

We note that since the degree $k$ can be odd, it follows that in these cases there is no constant component of the map $\varphi$, therefore the image of $\varphi$ does not lie in a hyperplane as in the case of the maps constructed using Theorem \eqref{th1}.

\bigskip

Next, as an application to Theorem \eqref{th2} we consider the special case when 
\begin{equation}\label{ec-12}
\varphi = \i \circ \left(\varphi_1,\varphi_2\right):\s^m\to\s^n,
\end{equation}
where $n = n_1+n_2+1$, $\i$ is the canonical inclusion of the standard product $\s^{n_1}(r_1)\times\s^{n_2}(r_2)$ in $\s^n$,  and $\varphi_1$ and $\varphi_2$ are given as in the below diagrams

\bigskip
\begin{center}
  \begin{tikzpicture}
  \matrix (m) [matrix of math nodes,row sep=3em,column sep=4em,minimum width=2em]
  {
     \r^{m+1} & \r^{n_1+1} \\
     \s^m & \s^{n_1}(r_1) \\};
  \path[-stealth]
    (m-2-1) edge node [left] {$\i$} (m-1-1)
    (m-1-1) edge node [above] {$F_1$} (m-1-2)
    (m-2-1) edge node [above] {$\Phi_1$} (m-1-2)
    (m-2-2) edge node [right] {$\i_1$} (m-1-2)
    (m-2-1) edge node [below] {$\varphi_1$} (m-2-2);
\end{tikzpicture} 
\begin{tikzpicture}
  \matrix (m) [matrix of math nodes,row sep=3em,column sep=4em,minimum width=2em]
  {
     \r^{m+1} & \r^{n_2+1} \\
     \s^m & \s^{n_2}(r_2) \\};
  \path[-stealth]
    (m-2-1) edge node [left] {$\i$} (m-1-1)
    (m-1-1) edge node [above] {$F_2$} (m-1-2)
    (m-2-1) edge node [above] {$\Phi_2$} (m-1-2)
    (m-2-2) edge node [right] {$\i_2$} (m-1-2)
    (m-2-1) edge node [below] {$\varphi_2$} (m-2-2);
\end{tikzpicture}
\end{center}
where $r_1^2 + r_2^2 = 1$ and $F_1$ and $F_2$ are forms of degree $k_1$, respectively $k_2$, i.e. on $\r^{m+1}$ we have $\left|F_1\left(\overline{x}\right)\right|^2 = r_1^2\left|\overline x\right|^{2k_1}$, respectively $\left|F_2\left(\overline{x}\right)\right|^2 = r_2^2\left|\overline x\right|^{2k_2}$.
Then, for $\varphi$ we have the following diagram
\bigskip
\begin{center}
  \begin{tikzpicture}
  \matrix (m) [matrix of math nodes,row sep=6em,column sep=7em,minimum width=4em]
  {
     \r^{m+1} & \r^{n_1+n_2+2} \\
     \s^m & \s^{n_1+n_2+1} \\};
  \path[-stealth]
    (m-2-1) edge node [left] {$\i$} (m-1-1)
    (m-1-1) edge node [above] {$F=(F_1,F_2)$} (m-1-2)
    (m-2-1) edge node [above] {$\Phi=(\Phi_1,\Phi_2)$} (m-1-2)
    (m-2-2) edge node [right] {$\i$} (m-1-2)
    (m-2-1) edge node [below] {$\varphi=\i\circ(\varphi_1,\varphi_2)$} (m-2-2);
\end{tikzpicture} 
\end{center}

\begin{theorem}\label{th6}
The tension field of the map $\varphi$ given in Equation \eqref{ec-12} is given by
\begin{align}\label{ec-13}
\tau(\varphi) =& -\Dz F + \left(\left(\left|\dz F\right|^2 - k_1^2r_1^2-k_2^2r_2^2+k_1(1-m-k_1)\right)\Phi_1,\right. \\
               &\left.  \left(\left|\dz F\right|^2 - k_1^2r_1^2-k_2^2r_2^2+k_2(1-m-k_2)\right)\Phi_2\right).\nonumber
\end{align}
\end{theorem}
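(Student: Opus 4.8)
The plan is to re-run the scheme behind Theorem \eqref{th3}, but adapted to the block map $F=(F_1,F_2)$, which is \emph{not} homogeneous of a single degree. First I would pass to the ambient Euclidean space. Since the standard embedding $\i:\s^n\to\r^{n+1}$ is totally umbilical with second fundamental form $B(X,Y)=-\langle X,Y\rangle\overline{r}$ (here the target radius is $1$ and $\overline{r}=\Phi$ on $\s^n$, because $|\Phi|^2=r_1^2+r_2^2=1$ on $\s^m$), the composition rule for $\Phi=\i\circ\varphi$ gives $\tau(\Phi)=\tau(\varphi)+\trace_g B(\d\varphi,\d\varphi)=\tau(\varphi)-|\d\Phi|^2\Phi$, hence
\[
\tau(\varphi)=\tau(\Phi)+|\d\Phi|^2\,\Phi .
\]
Everything then reduces to computing the flat tension field $\tau(\Phi)$ and the energy-density factor $|\d\Phi|^2$ in terms of the operators $\dz$ and $\Dz$ on $\r^{m+1}$.

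Next I would compute $\tau(\Phi)$. As $\r^{n+1}$ is flat, $\tau(\Phi)=-\Delta\Phi$ is just the negative rough Laplacian of the restriction $F_{|\s^m}$ taken componentwise. The crucial point here --- and the reason the statement splits into two different $\Phi_1$- and $\Phi_2$-coefficients --- is that $F_1$ and $F_2$ are homogeneous of distinct degrees $k_1,k_2$. Writing the Euclidean Laplacian in polar form and using homogeneity, a component of degree $d$ satisfies on $\s^m$ the identity
\[
\Delta\!\left(F^j_{\,|\s^m}\right)=\Dz F^j+d(d+m-1)\,F^j .
\]
Applying this blockwise, with $d=k_1$ on the $\Phi_1$-block and $d=k_2$ on the $\Phi_2$-block, yields
\[
\tau(\Phi)=-\Dz F-\big(k_1(k_1+m-1)\Phi_1,\;k_2(k_2+m-1)\Phi_2\big).
\]

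Then I would compute $|\d\Phi|^2$. Decomposing the full Euclidean differential $\dz F_i$ along the orthonormal frame consisting of the unit radial field $\overline{x}$ and a basis tangent to $\s^m$, the tangential part is exactly $\d\Phi_i$, while Euler's identity for the degree-$k_i$ form gives $\dz F_i(\overline{x})=k_iF_i=k_i\Phi_i$, whose squared length on $\s^m$ is $k_i^2 r_i^2$. Hence
\[
|\d\Phi_i|^2=|\dz F_i|^2-k_i^2 r_i^2,\qquad i=1,2,
\]
and summing the two blocks,
\[
|\d\Phi|^2=|\dz F|^2-k_1^2 r_1^2-k_2^2 r_2^2 .
\]
Substituting the last two displays into $\tau(\varphi)=\tau(\Phi)+|\d\Phi|^2\Phi$ and using $-k_i(k_i+m-1)=k_i(1-m-k_i)$ reproduces \eqref{ec-13} exactly.

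The only genuine obstacle is conceptual rather than computational: because $F$ carries two distinct homogeneity degrees, Theorem \eqref{th3} cannot be invoked directly, and one must re-derive it separately on each homogeneous block so that the degree-dependent correction $k_i(1-m-k_i)$ attaches to the correct component $\Phi_i$, while the common energy-density factor $|\d\Phi|^2$ multiplies the entire position vector $\Phi=(\Phi_1,\Phi_2)$. Once the blockwise polar Laplacian identity and the radial--tangential splitting of $|\dz F|^2$ are in place, the remainder is bookkeeping.
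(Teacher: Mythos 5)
Your proposal is correct and takes essentially the same route as the paper: both reduce to the Euclidean picture via $\tau(\varphi)=\tau(\Phi)+|\d\Phi|^2\,\Phi$ using the umbilical embedding, compute $|\d\Phi|^2=|\dz F|^2-k_1^2r_1^2-k_2^2r_2^2$ by the radial--tangential splitting together with Euler's identity applied blockwise, and obtain the degree-dependent correction $k_i(1-m-k_i)$ on each block from the restriction identity for the Laplacian of a homogeneous function. The only cosmetic difference is that the paper derives that restriction identity from the tension-field composition formula for $\s^m\hookrightarrow\r^{m+1}$ in a geodesic frame (its Equations \eqref{ec-16}--\eqref{ec-20}), whereas you quote the polar form of the Euclidean Laplacian directly; these are the same computation.
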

\begin{proof}
It is well-known that the standard isometric embedding $\i:\s^n\to \r ^{n+1}$ is a totally umbilical hypersurface with the unit normal vector field $\overline r$, that  associates to any point the corresponding position vector.

For simplicity of notation, an arbitrary point $p\in\s^m$ will be denoted by $\overline x$. Now let $\overline x\in\s^m$ be an arbitrary point and consider a geodesic frame field $\{X_i\}_{i=1}^m$ around $\overline x$, defined on the open subset $U$ of $\s^m$, $\overline x\in U$.

On U, as in \cite{AOO23}, we have
\begin{equation}\label{ec-15}
|\d \Phi|^2=\sum_{i=1}^{m}\left|\d \Phi(X_i)\right|^2 = \sum_{i=1}^{m}\left|\dz F(X_i)\right|^2 = \left|\dz F\right|^2-\left|\dz F(\overline r)\right|^2 = \left|\dz F\right|^2-|\overline  r F|^2
\end{equation}
Then, at $\overline x$ we have
\begin{equation*}
|\d \Phi|^2_{\overline x} = \left|\dz F\right|^2_{\overline x} - \left|(F\circ\gamma)'(1)\right|^2,
\end{equation*}
where $\gamma(t)= t\overline x = t \overline r(\overline x)$.

On $U$ we have
\begin{align}\label{ec-16}
\tau(\Phi) &= \dz F(\tau(i)) + \textnormal{trace}\nz\dz F(\d \i\cdot,\d \i\cdot) \nonumber\\
           &= \dz F(-m \overline r)+\tz(F)-(\nz\dz F)(\overline r,\overline r) \\
           &= - m \overline r F + \tz(F) -  \overline r(\overline r F)+ r F \nonumber\\
           &= \tz(F)+(1-m) \overline r F- \overline r(\overline r F).\nonumber
\end{align}
Equivalently, on $U$
\begin{equation*}
\Delta\Phi = \Dz F - (1-m)\overline r F + \overline r(\overline r F).
\end{equation*}

Since $F_1$ is a form of degree $k_1$ and $F_2$ is a form of degree $k_2$, we have
 \begin{align}\label{ec-17}
   \left(\overline rF\right)(\overline{x}) & = \overline r(\overline{x})F \nonumber\\
    & = \frac{\textnormal{d}}{\textnormal{d}t}\Big|_{t=1}\left\{F(t\overline{x})\right\}\\
    & = \frac{\textnormal{d}}{\textnormal{d}t}\Big|_{t=1}\left\{\left(t^{k_1}F_1(\overline{x}),t^{k_2}F_2(\overline{x})\right)\right\} \nonumber\\
    & = (k_1 F_1(\overline{x}), k_2 F_2(\overline{x})).\nonumber
 \end{align}
 This kind of formula holds for the action of $\overline r$ on $\Dz F$, as
\begin{equation*}
  \left( \Dz F \right)(t\overline{x})=\left( t^{k_1 - 2}\left( \Dz F_1\right)(\overline{x}), t^{k_2 - 2} \left(\Dz F_2\right)(\overline{x}) \right).
\end{equation*}
Thus,
\begin{equation}\label{ec-18}
  \overline r\left( \Dz F \right) = \left( (k_1 - 2) \Dz F_1,(k_2-2)\Dz F_2 \right).
\end{equation}

Using equation \eqref{ec-17}, on $\s^m$ Equation \eqref{ec-15} becomes
\begin{align}\label{ec-21}
\left| \textnormal{d}\Phi\right|^2 & = \left| \dz F\right|^2 - \left|(k_1 F_1,k_2F_2)\right|^2 \\
                                   & = \left| \dz F\right|^2 - \left(k_1 ^2 r_1^2 + k_2^2 r_2^2\right). \nonumber
\end{align}

Using Equations \eqref{ec-16} and \eqref{ec-17}, it follows that
\begin{align}\label{ec-19}
  \tau\left(\Phi\right) =& \tz\left(F\right) - m (k_1F_1,k_2F_2) - \left( k_1(k_1 - 1)F_1, k_2 (k_2-1)F_2\right)\\
                        =& \tz\left(F\right) +(1 - m) (k_1\Phi_1,k_2\Phi_2) - ( k_1 ^{2}\Phi_1, k_2 ^{2} \Phi_2).\nonumber
\end{align}
Thus, on $\s^m$ we have
\begin{align}\label{ec-20}
  \Delta \Phi =& \Dz F + \left( k_1\left(m + k_1 - 1\right)\Phi_1, k_2 \left(m + k_2-1\right)\Phi_2\right)\\
              =& \left(\Dz F_1 + k_1(m+k_1-1)\Phi_1,\Dz  F_2 + k_2 (m+k_2-1)\Phi_2 \right).\nonumber
\end{align}

Since
\begin{align*}
\tau(\varphi)  = & \tau(\Phi) + \left|\d \Phi\right|^2\Phi, 
\end{align*}
using Equations \eqref{ec-19} and \eqref{ec-21} it follows that 
\begin{align*}
\tau(\varphi)  = & \tau(\Phi) + \left(\left|\dz F\right|^2 -k_1^2r_1^2-k_2^2r_2^2\right)\Phi \\
               = &\left(-\Dz F_1 + k_1(1-m-k_1)\Phi_1, -\Dz F_2 + k_2(1-m-k_2)\Phi_2\right)\\
               & +  \left(\left|\dz F\right|^2 -k_1^2r_1^2-k2^2r_2^2\right)\Phi \\  
               = & -\Dz F + \left(\left(\left|\dz F\right|^2 - k_1^2r_1^2-k_2^2r_2^2+k_1(1-m-k_1)\right)\Phi_1,\right. \\
               &\left.  \left(\left|\dz F\right|^2 - k_1^2r_1^2-k_2^2r_2^2+k_2(1-m-k_2)\right)\Phi_2\right).
\end{align*}
\end{proof}

\begin{corollary}\label{cor1}
If $\Dz F_1 = 0$ and $\Dz F_2 = 0$, then the map $\varphi$ given in Equation \eqref{ec-12} is harmonic if and only if $k_1 = k_2$.
\end{corollary}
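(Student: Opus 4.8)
The plan is to feed the hypothesis $\Dz F_1 = \Dz F_2 = 0$ into the tension field formula of Theorem \eqref{th6} and to reduce the harmonicity of $\varphi$ to two scalar equations. First, since $\Dz F = (\Dz F_1, \Dz F_2) = 0$, the leading term $-\Dz F$ of Equation \eqref{ec-13} drops out. Moreover $F_1$ and $F_2$ are now harmonic forms of degrees $k_1$ and $k_2$ restricting to $\varphi_1:\s^m\to\s^{n_1}(r_1)$ and $\varphi_2:\s^m\to\s^{n_2}(r_2)$, so by the radius-$r$ version of Proposition \eqref{pr3} (the value recorded in the proof of Proposition \eqref{pr6}) each $\varphi_i$ is harmonic with constant energy density and $|\dz F_i|^2 = k_i r_i^2(m+2k_i-1)$ on $\s^m$. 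Summing, $|\dz F|^2 = |\dz F_1|^2 + |\dz F_2|^2$ is the constant $k_1 r_1^2(m+2k_1-1) + k_2 r_2^2(m+2k_2-1)$.

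With these reductions Equation \eqref{ec-13} becomes $\tau(\varphi) = (c_1\Phi_1, c_2\Phi_2)$, where for $i=1,2$ the scalar
$$
c_i = |\dz F|^2 - k_1^2 r_1^2 - k_2^2 r_2^2 + k_i(1-m-k_i)
$$
is a genuine constant. Since $\Phi_1$ and $\Phi_2$ are the position vectors of the factor spheres, we have $|\Phi_1| \equiv r_1 > 0$ and $|\Phi_2| \equiv r_2 > 0$, and they lie in orthogonal coordinate subspaces of $\r^{n_1+n_2+2}$; hence $\tau(\varphi) = 0$ is equivalent to the pair of conditions $c_1 = 0$ and $c_2 = 0$ holding simultaneously.

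To conclude I would compare the two conditions. A direct computation gives
$$
c_1 - c_2 = k_1(1-m-k_1) - k_2(1-m-k_2) = (k_1-k_2)(1-m-k_1-k_2).
$$
Because $m\geq 1$ and $k_1,k_2\geq 1$, the factor $1-m-k_1-k_2$ is strictly negative, hence nonzero; so $c_1 = c_2$ if and only if $k_1 = k_2$. This gives the forward direction, for harmonicity forces $c_1 = c_2 = 0$, whence $k_1 = k_2$. For the converse, putting $k_1 = k_2 = k$ and using $r_1^2 + r_2^2 = 1$ collapses $|\dz F|^2$ to $k(m+2k-1)$ and $k_1^2 r_1^2 + k_2^2 r_2^2$ to $k^2$, so that $c_1 = c_2 = k(m+k-1) + k(1-m-k) = 0$ and $\varphi$ is harmonic. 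No step presents a real obstacle once Theorem \eqref{th6} is available; the only points demanding care are checking that $|\dz F|^2$ is constant, so that $\tau(\varphi)=0$ genuinely decouples into two scalar identities, and verifying that the difference factor $1-m-k_1-k_2$ cannot vanish in the admissible range of $m$, $k_1$, $k_2$.
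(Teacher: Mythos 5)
Your proof is correct and follows essentially the same route as the paper: both substitute $\Dz F = 0$ and the constant energy densities $\left|\dz F_i\right|^2 = k_i r_i^2(m+2k_i-1)$ into Equation \eqref{ec-13} and reduce harmonicity to the vanishing of the two scalar coefficients multiplying $\Phi_1$ and $\Phi_2$. The only cosmetic difference is that the paper factors both coefficients explicitly as multiples of $(k_2-k_1)$ using $r_2^2 = 1-r_1^2$, whereas you extract $k_1 = k_2$ from the difference $c_1 - c_2 = (k_1-k_2)(1-m-k_1-k_2)$; both lead to the same conclusion.
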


\begin{proof}
If $\Dz F = 0$, then $\Dz F_1 = 0$ and $\Dz F_2 = 0$. It follows from Theorem \eqref{th3} that on $\s^m$ we have $\left|\dz F_1\right|^2 = k_1r_1^2(m+2k_1-1)$ and $\left|\dz F_2\right|^2 = k_2r_2^2(m+2k_2-1)$. Therefore, as $r_2^2 = 1 - r^2_1$, from Equation \eqref{ec-13} it follows by direct calculations that
\begin{align*}
\tau(\varphi) =& (1-r_1^2)\left(k_2-k_1\right)\left(\left(m+k_2+k_1+1\right)\Phi_1, -\left(m+k_2+k_1+1\right)\Phi_2\right).
\end{align*}
Thus, $\varphi$ is harmonic if and only if $k_1 = k_2$.
\end{proof}

\begin{corollary}\label{cor3}
If $F_1:\r^{m+1}\to\r^{n_1+1}$ and $F_2:\r^{m+1}\to\r^{n_2+1}$ are forms of degree $k_1$, respectively $k_2$, each of them of minimal degree in its class, such that their restrictions $\varphi_1:\s^m\to\s^{n_1}(r_1)$ and $\varphi_2:\s^m\to\s^{n_2}(r_2)$ are harmonic, then the map $\f$ given in Equation \eqref{ec-12} is harmonic if and only if $k_1 = k_2$. 
\end{corollary}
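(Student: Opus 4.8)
The plan is to reduce the statement to Corollary \eqref{cor1}, whose only extra hypothesis, beyond the harmonicity of $\varphi_1$ and $\varphi_2$, is that $\Dz F_1 = 0$ and $\Dz F_2 = 0$. So it suffices to show that, under the minimality assumption, each form $F_i$ is automatically harmonic; this is precisely the content of Proposition \eqref{pr5}, with the single caveat that there the target is the \emph{unit} sphere, whereas here $\varphi_1$ and $\varphi_2$ take values in $\s^{n_1}(r_1)$ and $\s^{n_2}(r_2)$.

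First I would re-run the argument of Proposition \eqref{pr5} in the radius-$r_i$ setting. Fix $i\in\{1,2\}$. Since $\varphi_i$ is harmonic, Theorem \eqref{th3} gives, on $\s^m$,
$$
\Dz F_i = \left(\frac{1}{r_i^2}\left|\dz F_i\right|^2 - k_i(m+2k_i-1)\right)\Phi_i .
$$
I would then argue by cases on the coefficient $c_i := \tfrac{1}{r_i^2}\left|\dz F_i\right|^2 - k_i(m+2k_i-1)$, exactly as in the proof of Proposition \eqref{pr5}. If $c_i\equiv 0$ on $\s^m$, then $\Dz F_i$ vanishes on $\s^m$, hence identically, being a homogeneous polynomial; this gives $\Dz F_i = 0$. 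If $c_i$ is a nonzero constant, then using $\Phi_i = F_i$ on $\s^m$ and homogenizing yields $\left|\overline x\right|^2\Dz F_i = c_i F_i$ on $\r^{m+1}$, so $\left|\overline x\right|^2$ divides $F_i$, contradicting that $F_i$ is of minimal degree in its class. If $c_i$ is nonconstant, I would homogenize as in Equation \eqref{e3} and compare degrees to again force $\left|\overline x\right|^2 \mid F_i$, the same contradiction. Hence $\Dz F_i = 0$ for $i=1,2$.

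With $\Dz F_1 = 0$ and $\Dz F_2 = 0$ in hand, the hypotheses of Corollary \eqref{cor1} hold, and I would invoke it directly to conclude that $\varphi$ is harmonic if and only if $k_1 = k_2$. The only point requiring genuine attention, rather than pure bookkeeping, is the radius-$r_i$ extension of Proposition \eqref{pr5}; but this is mild, since the divisibility-and-degree argument depends only on $\Dz F_i$ having degree $k_i-2$, on $F_i$ having degree $k_i$, and on the fact that no nonzero multiple of $\left|\overline x\right|^2$ can divide a form of minimal degree in its class. The value of the radius never enters that reasoning, so the extension is automatic.
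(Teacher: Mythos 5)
Your proposal is correct and follows essentially the same route as the paper, whose entire proof reads: Proposition \eqref{pr5} gives $\Dz F_1 = 0$ and $\Dz F_2 = 0$, and the conclusion then follows from Corollary \eqref{cor1}. Your additional care in re-running the argument of Proposition \eqref{pr5} with the factor $1/r_i^2$ from Theorem \eqref{th3} addresses a point the paper silently glosses over (it cites Proposition \eqref{pr5} directly even though that proposition is stated for unit target spheres), so your version is, if anything, slightly more rigorous.
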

\begin{proof}
Using Proposition \eqref{pr5} it follows that $\Dz F_1 = 0$ and $\Dz F_2 = 0$. The conclusion follows from Corollary \eqref{cor1}.
\end{proof}

\begin{theorem}\label{cor2}
Let $\varphi_1:\s^m\to\s^{n_1}(r_1)$ and $\varphi_2:\s^m\to\s^{n_2}(r_2)$ be two harmonic maps with constant energy densities, such that $r_1^2+r_2^2=1$. Then the map 
\begin{equation*}
\varphi = \i \circ \left(\varphi_1,\varphi_2\right):\s^m\to\s^{n_1+n_2+1},
\end{equation*}
 is harmonic if and only if  $\left|\d \varphi_1\right|^2 /r_1^2 = \left|\d \varphi_2\right|^2/r_2^2$. 
\end{theorem}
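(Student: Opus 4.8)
The plan is to convert the analytic hypothesis into the algebraic data of harmonic forms and then read off $\tau(\varphi)$ directly from Theorem \eqref{th6}. First I would represent each factor by a harmonic form. Since $\varphi_1,\varphi_2$ are harmonic with constant energy density, I would apply Proposition \eqref{pr4}; but that statement concerns the unit sphere, so a preliminary rescaling is needed. The homothety $\hat\varphi_i=\varphi_i/r_i:\s^m\to\s^{n_i}$ is again harmonic and still has constant energy density $e(\hat\varphi_i)=e(\varphi_i)/r_i^2$, so Proposition \eqref{pr4} yields a unique $k_i\in\mathbb{N}^*$ and a harmonic form $\hat F_i$ of degree $k_i$ restricting to $\hat\varphi_i$. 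Setting $F_i=r_i\hat F_i$ gives a harmonic form of degree $k_i$ with $\left|F_i\right|^2=r_i^2\left|\overline x\right|^{2k_i}$ restricting to $\varphi_i$, so that $\Dz F_1=\Dz F_2=0$ and the whole setting of Theorem \eqref{th6} is in force. (I would assume both maps nonconstant, so that the energy densities are strictly positive as Proposition \eqref{pr4} requires; the constant case is degenerate and can be checked separately.)

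With the forms in hand, I would substitute into Equation \eqref{ec-13}. Because $\Dz F=0$, the tension field collapses to $\tau(\varphi)=\left(c_1\Phi_1,\,c_2\Phi_2\right)$, where $c_1=\left|\dz F\right|^2-k_1^2r_1^2-k_2^2r_2^2+k_1(1-m-k_1)$ and $c_2$ is obtained by interchanging the indices $1$ and $2$. For harmonic forms I would use $\left|\dz F_i\right|^2=k_ir_i^2(m+2k_i-1)$ from Proposition \eqref{pr6} (equivalently Proposition \eqref{pr3}), whence $\left|\dz F\right|^2=\left|\dz F_1\right|^2+\left|\dz F_2\right|^2$, together with the Euler relation $\overline r F_i=k_iF_i$, which gives, as in Equation \eqref{ec-21}, $\left|\d\varphi_i\right|^2=\left|\dz F_i\right|^2-k_i^2r_i^2=k_ir_i^2(m+k_i-1)$, i.e. $\left|\d\varphi_i\right|^2/r_i^2=k_i(m+k_i-1)$. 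Carrying out the (routine) algebra and using $r_1^2+r_2^2=1$, the coefficient simplifies to
\[
c_1=r_2^2\left(k_2(m+k_2-1)-k_1(m+k_1-1)\right)=r_2^2\left(\frac{\left|\d\varphi_2\right|^2}{r_2^2}-\frac{\left|\d\varphi_1\right|^2}{r_1^2}\right),
\]
and symmetrically $c_2=-r_1^2\left(k_2(m+k_2-1)-k_1(m+k_1-1)\right)$.

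To conclude, I would observe that $\Phi_1,\Phi_2$ are the position vector fields of maps into spheres of positive radii $r_1,r_2$, hence are nowhere zero, so $\tau(\varphi)=0$ holds precisely when $c_1=c_2=0$. Since both $c_1$ and $c_2$ are nonzero scalar multiples of the single bracket $k_2(m+k_2-1)-k_1(m+k_1-1)$, they vanish simultaneously, and this occurs exactly when $\left|\d\varphi_1\right|^2/r_1^2=\left|\d\varphi_2\right|^2/r_2^2$. This settles both implications of the equivalence at once.

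The hard part is really the first step: Proposition \eqref{pr4} is the engine that turns ``harmonic with constant energy density'' into a harmonic form, and since it is stated only for the unit sphere, the rescaling argument and the careful bookkeeping of the radius factors $r_1,r_2$ are where the attention is needed; after that the proof is a substitution into Equation \eqref{ec-13} combined with the harmonic-form identities. I would add as a remark that, because $k\mapsto k(m+k-1)$ is strictly increasing on $\mathbb{N}^*$, the energy-density condition $\left|\d\varphi_1\right|^2/r_1^2=\left|\d\varphi_2\right|^2/r_2^2$ is equivalent to $k_1=k_2$, so this theorem recovers and reformulates Corollaries \eqref{cor1} and \eqref{cor3} intrinsically in terms of energy densities.
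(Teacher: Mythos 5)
Your proof is correct and follows essentially the same route as the paper's: Proposition \eqref{pr4} is used to represent $\varphi_1,\varphi_2$ by harmonic forms $F_1,F_2$, and then the tension field formula \eqref{ec-13} of Theorem \eqref{th6} shows $\varphi$ is harmonic precisely when the normalized energy densities agree. The differences are only in presentation: you inline the computation that the paper delegates to Corollary \eqref{cor1} (your coefficients $c_1=r_2^2\bigl(k_2(m+k_2-1)-k_1(m+k_1-1)\bigr)$, $c_2=-r_1^2\bigl(\cdots\bigr)$ are in fact the corrected form of the expression appearing in that corollary's proof), and you make explicit the target rescaling needed to invoke Proposition \eqref{pr4} for spheres of radius $r_i$, as well as the nonconstancy caveat, points the paper passes over silently.
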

\begin{proof}
As $\f_1$ and $\f_2$ are harmonic with constant energy densities, from Proposition \eqref{pr4} there exist the unique non-negative integers $k_1$ and $ k_2$ such that 
\begin{align}\label{ec-13.1}
e(\f_1) &= \frac{1}{2} k_1r^2_1(k_1 + m - 1), \\
e(\f_2) &= \frac{1}{2} k_2r^2_2( k_2 + m - 1),\nonumber
\end{align} 
and there exist unique vector valued functions $ F_1:\r^{m+1}\to\r^{n_1+1}$ and $ F_2:\r^{m+1}\to\r^{n_2+1}$, such that each of their components is either a harmonic homogeneous polynomial of degree $k_1$, respectively $ k_2$, or the null polynomial, and they restrict to $\f_1$, respectively $\f_2$.
Applying Corollary \eqref{cor1} for $F_1$ and $F_2$, it follows that $\varphi$ is harmonic if and only if $k_1 = k_2$ and using Equation \eqref{ec-13.1} this is further equivalent to 
$$
\frac{1}{r_1^2}\left|\d \varphi_1\right|^2  = \frac{1}{r_2^2}\left|\d \varphi_2\right|^2.
$$
\end{proof}

\begin{theorem}\label{th7}
The bitension field of the map $\varphi$ given in Equation \eqref{ec-12} is given by
\begin{align}\label{ec-14}
  \tau_2(\varphi) =&\Dz\Dz F \nonumber \\
  &+ \left( 2(mk_1+k_1^2-3k_1-m+3)\Dz  F_1 + k_1^2(m+k_1-1)^2 \Phi_1,\right.\nonumber\\
  &\left. 2(mk_2+k_2^2-3k_2-m+3)\Dz  F_2 + k_2^2(m+k_2-1)^2 \Phi_2 \right)\nonumber  \\
   & + 2\left( \left|\dz F \right| ^2 -k_1^2r_1^2-k_2^2r_2^2\right)\nonumber\\
   & \cdot \left( -\Dz F_1 + k_1(1-m-k_1)\Phi_1,  -\Dz F_2 + k_2(1-m-k_2)\Phi_2\right)  \nonumber\\
   & +\left\{ -2\Dz \left( \left|\dz F \right| ^2 \right) - 2 \left|\stackrel{o}{\nabla} \dz F \right| ^2 + \left|\Dz  F \right|^2 \right. \nonumber \\ &\left.+2(m+2k_1-3)\left|\dz F_1 \right| ^2+2(m+2k_2-3)\left|\dz F_2 \right| ^2\right. \\
   & \left. -r_1^2k_1^2(m^2+4mk_1-6m+5k_1^2-7k_1+5) -2k_1(m+k_1-1)\left|\dz F_1 \right| ^2\right.\nonumber\\
   & \left. - r_2^2k_2^2(m^2+4mk_2-6m+5k_2^2-7k_2+5) - 2k_2(m+k_2-1)\left|\dz F_2 \right| ^2\right.\nonumber\\
   &\left.+2\left( \left|\dz F \right| ^2 - k_1^2r_1^2 - k_2^2r_2^2 \right)^2\right\}\Phi+2\dz F\left( \stackrel{o}{\textnormal{grad}}\left( \left|\dz F \right| ^2  \right)\right)\nonumber\\
   &-4\left( (k_1-1)\left|\dz F_1 \right| ^2 + (k_2-1) \left|\dz F_2 \right| ^2\right)(k_1\Phi_1,k_2\Phi_2).\nonumber
\end{align}
\end{theorem}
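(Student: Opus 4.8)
The plan is to specialise the master identity \eqref{ec-1} of Theorem \eqref{th2} to the unit target $r=1$ (here $\f$ lands in $\s^{n_1+n_2+1}$) and then to evaluate, for the composite lift $\Phi=(\Phi_1,\Phi_2)$, $F=(F_1,F_2)$, each of the flat-space quantities appearing there: $\tau_2(\Phi)$, $|\d\Phi|^2$, $\tau(\Phi)$, $|\tau(\Phi)|^2$, $\Delta|\d\Phi|^2$, $\textnormal{div}\,\theta^\sharp$ and $\d\Phi(\textnormal{grad}|\d\Phi|^2)$. The structural remark driving everything is that, since the target splits as $\r^{n_1+1}\times\r^{n_2+1}$, every one of these objects splits along the two factors, and on each factor I may recycle the intermediate formulas established in the proof of Theorem \eqref{th6}: $\overline rF=(k_1F_1,k_2F_2)$ from \eqref{ec-17}, $\overline r(\Dz F)$ from \eqref{ec-18}, $\tau(\Phi)$ from \eqref{ec-19}, $\Delta\Phi$ from \eqref{ec-20}, and $|\d\Phi|^2=|\dz F|^2-(k_1^2r_1^2+k_2^2r_2^2)$ from \eqref{ec-21}. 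I will use freely the homogeneity of $F_i$ together with $|\Phi_i|^2=r_i^2$, $\langle\Phi_1,\Phi_2\rangle=0$ and $\langle\d\Phi(X),\Phi\rangle=0$.

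First I dispose of the vector-valued contributions. Because the ambient $\r^{n_1+n_2+2}$ is flat, the curvature term in $\tau_2(\Phi)$ vanishes and $\tau_2(\Phi)=-\Delta\tau(\Phi)$ splits as $(-\Delta\tau(\Phi_1),-\Delta\tau(\Phi_2))$. Writing $\tau(\Phi_i)=-\Dz F_i+k_i(1-m-k_i)\Phi_i$ and applying the umbilic computation \eqref{ec-16} once to the degree-$(k_i-2)$ form $\Dz F_i$ and once to $\Phi_i$ gives $\tau_2(\Phi_i)=\Dz\Dz F_i+2(mk_i+k_i^2-3k_i-m+3)\Dz F_i+k_i^2(m+k_i-1)^2\Phi_i$, which is exactly the first two lines of \eqref{ec-14}. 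The term $2|\d\Phi|^2\tau(\Phi)$ reproduces lines three and four after inserting \eqref{ec-21} and \eqref{ec-19}. Finally, for $\d\Phi(\textnormal{grad}|\d\Phi|^2)$ I split the Euclidean gradient $\stackrel{o}{\textnormal{grad}}(|\dz F|^2)$ into its $\s^m$-tangential part, which equals $\textnormal{grad}|\d\Phi|^2$, and its radial part; since $|\dz F_i|^2$ is homogeneous of degree $2(k_i-1)$ the latter is $\sum_i 2(k_i-1)|\dz F_i|^2\,\overline r$, and applying $\d\Phi=\dz F$ together with $\dz F(\overline r)=(k_1\Phi_1,k_2\Phi_2)$ produces the term $2\dz F(\stackrel{o}{\textnormal{grad}}(|\dz F|^2))$ and the final correction line of \eqref{ec-14}.

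The substantive part is the scalar coefficient of $\Phi$, namely $-\Delta|\d\Phi|^2+2\,\textnormal{div}\,\theta^\sharp-|\tau(\Phi)|^2+2|\d\Phi|^4$. I read off $|\d\Phi|^4$ at once from \eqref{ec-21}. Expanding $|\tau(\Phi)|^2$ from the split $\tau(\Phi)$ produces $|\Dz F|^2$ together with cross terms governed by $\langle\Dz F_i,\Phi_i\rangle$; this pairing I obtain by homogenising $|F_i|^2=r_i^2|\overline x|^{2k_i}$ and using $\Dz(|F_i|^2)=2\langle\Dz F_i,F_i\rangle-2|\dz F_i|^2$, which gives $\langle\Dz F_i,\Phi_i\rangle=|\dz F_i|^2-k_ir_i^2(m+2k_i-1)$ on $\s^m$. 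For $\Delta|\d\Phi|^2$ I apply the scalar version of \eqref{ec-20} to each homogeneous piece $|\dz F_i|^2$ of degree $2(k_i-1)$. For $\textnormal{div}\,\theta^\sharp$ I use $\textnormal{div}\,\theta^\sharp=|\tau(\Phi)|^2+\langle\d\Phi,\nabla\tau(\Phi)\rangle$ together with the Bochner--Weitzenb\"ock identity for the flat-valued map $\Phi$ over $\s^m$, whose Ricci tensor is $(m-1)g$, namely $-\tfrac12\Delta|\d\Phi|^2=|\nabla\d\Phi|^2+\langle\d\Phi,\nabla\tau(\Phi)\rangle+(m-1)|\d\Phi|^2$. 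Combining these, $-\Delta|\d\Phi|^2+2\,\textnormal{div}\,\theta^\sharp=-2\Delta|\d\Phi|^2+2|\tau(\Phi)|^2-2|\nabla\d\Phi|^2-2(m-1)|\d\Phi|^2$, so that the announced $-2\Dz(|\dz F|^2)$ and $-2|\nz\dz F|^2$ emerge once $\Delta|\d\Phi|^2$ and $|\nabla\d\Phi|^2$ are rewritten through the operators $\Dz$, $\nz$ on $\r^{m+1}$ (the latter rewriting costing radial-correction terms). Assembling all four pieces and collecting the polynomial coefficients of $|\dz F_1|^2$, $|\dz F_2|^2$, $r_1^2$ and $r_2^2$ gives the brace in \eqref{ec-14}.

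The only genuine difficulty is the bookkeeping in the previous paragraph. The two second-order quantities $\Delta|\d\Phi|^2$ and $\textnormal{div}\,\theta^\sharp$ force a careful passage between the intrinsic operators on $\s^m$ and the flat operators $\Dz$, $\nz$ on $\r^{m+1}$, and rewriting $|\nabla\d\Phi|^2$ as $|\nz\dz F|^2$ plus radial corrections is exactly where the polynomial terms $2(m+2k_i-3)|\dz F_i|^2$ and $-2k_i(m+k_i-1)|\dz F_i|^2$ are generated. Because $k_1\neq k_2$ in general, the two factors carry distinct homogeneity weights, so the $|\dz F_1|^2$- and $|\dz F_2|^2$-contributions never amalgamate and must be tracked with separate coefficients. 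A companion subtlety is that the contributions sort into three channels — those proportional to the common radial field $\Phi=(\Phi_1,\Phi_2)$, and those attached to $\Phi_1$ and to $\Phi_2$ with unequal coefficients (arising from $\tau_2(\Phi_i)$, from $2|\d\Phi|^2\tau(\Phi)$, and from $\dz F(\overline r)$) — and keeping these apart is precisely what produces the asymmetric shape of \eqref{ec-14}.
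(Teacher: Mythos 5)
Your proposal is correct, and its skeleton coincides with the paper's proof: specialize Equation \eqref{ec-1} to $r=1$, recycle the intermediate identities \eqref{ec-17}--\eqref{ec-21} from the proof of Theorem \eqref{th6}, and evaluate every remaining term via the homogeneity of $F_1,F_2$. Your treatment of $\tau_2(\Phi)$, of $2|\d\Phi|^2\tau(\Phi)$, of $\Delta|\d\Phi|^2$ and of the gradient term is exactly the paper's. The one genuinely different step is $\textnormal{div}\,\theta^\sharp$: the paper expands $\theta(X_i)=\langle\dz F(X_i),\tz(F)\rangle$ directly in a geodesic frame, collecting the radial corrections, and then applies the flat Weitzenb\"ock identity $\langle \dz F,\Dz\dz F\rangle=\tfrac12\Dz\left(|\dz F|^2\right)+|\nz\dz F|^2$, so that $|\nz\dz F|^2$ enters with no further conversion; you instead use $\textnormal{div}\,\theta^\sharp=|\tau(\Phi)|^2+\langle\d\Phi,\nabla\tau(\Phi)\rangle$ together with the intrinsic Bochner formula on $\s^m$ (Ricci $=(m-1)g$). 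Your identities are correct in the paper's sign conventions, and this route is cleaner at the level of general formulas, but it shifts the work into rewriting $|\nabla\d\Phi|^2$ as $|\nz\dz F|^2$ plus radial corrections, for which you need
\begin{equation*}
(\nz\dz F)(X,\overline r)=\bigl((k_1-1)\dz F_1(X),(k_2-1)\dz F_2(X)\bigr),\qquad
(\nz\dz F)(\overline r,\overline r)=\bigl(k_1(k_1-1)\Phi_1,k_2(k_2-1)\Phi_2\bigr);
\end{equation*}
this is comparable in length to the paper's direct expansion, and it does reproduce the same coefficients of $|\dz F_1|^2$ and $|\dz F_2|^2$. Likewise, your derivation of $\langle\Dz F_i,\Phi_i\rangle=|\dz F_i|^2-k_ir_i^2(m+2k_i-1)$ by homogenizing $|F_i|^2=r_i^2|\overline x|^{2k_i}$ is a correct variant of the paper's, which obtains the same identity from the relation between $\tau(\Phi_i)$ and $\tau(\varphi_i)$.

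One caveat: carried out exactly, your assembly (and equally the paper's own intermediate formulas for $2\,\textnormal{div}\,\theta^\sharp$ and $-|\tau(\Phi)|^2$) gives the pure constant in the brace as
\begin{equation*}
r_i^2k_i^2\Bigl[(m+k_i-1)(m+3k_i-1)-2(m+2k_i-3)(m+2k_i-1)\Bigr]
=-r_i^2k_i^2\bigl(m^2+4mk_i-6m+5k_i^2-12k_i+5\bigr),
\end{equation*}
i.e. with $-12k_i$, not the printed $-7k_i$. The value $-12k_i$ is confirmed independently: setting $k_1=k_2=k$ and $r_1^2+r_2^2=1$ in \eqref{ec-14} must recover Theorem \eqref{th4} with $r=1$, and the constant $4k^2(m+2k-1)$ there is obtained precisely with $-12k$, not with $-7k$. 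So you should not expect to land on \eqref{ec-14} verbatim; the discrepancy is a slip in the printed coefficient, not a flaw in your plan.
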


\begin{proof}
In the same setup as in the proof of Theorem \eqref{th6}, we continue to study the terms from the right-hand side of Equation \eqref{ec-1} in the case when the radius is $1$, aiming to express $\tau_2(\varphi)$ in terms of $F_1$ and $F_2$. 

Using Equation \eqref{ec-21}, on $\s^m$ we have
\begin{equation*}
  \textnormal{grad}\left(\left| \textnormal{d}\Phi\right|^2\right) = \stackrel{o}{\textnormal{grad}}\left( \left| \dz F\right|^2 \right) - \overline r \left( \left| \dz F\right|^2 \right).
\end{equation*}
Therefore, on $\s^m$
\begin{align}\label{ec-22}
  2\textnormal{d}\Phi\left( \textnormal{grad}\left(\left| \textnormal{d}\Phi\right|^2 \right)\right) & = 2\dz F\left(\stackrel{o}{ \textnormal{grad}}\left(\left| \dz F\right|^2\right) - \overline  r \left( \left| \dz F\right|^2  \right)\overline r\right)\nonumber\\
  & = 2\dz F\left(\stackrel{o}{ \textnormal{grad}}\left(\left| \dz F\right|^2\right)\right) - 2\left[  \overline r \left( \left| \dz F\right|^2 \right) \right]\cdot (k_1\Phi_1,k_2 \Phi_2).
\end{align}
We note that
\begin{align*}
  \left| \left( \dz F \right)(t\overline{x})\right|^2 &= \left| \left( \dz F_1 \right)(t\overline{x})\right|^2 + \left| \left( \dz F_2 \right)(t\overline{x})\right|^2 \\
   & = t^{2(k_1 - 1)}\left| \left( \dz F_1 \right)(\overline{x})\right|^2 + t^{2(k_2 - 1)} \left| \left( \dz F_2 \right)(\overline{x})\right|^2.
\end{align*}
It follows that
\begin{equation}\label{ec-23}
\overline r \left(\left|  \dz F \right|^2\right) = 2(k_1-1) \left|  \dz F_1 \right|^2 + 2(k_2-1)\left|  \dz F_2 \right|^2.
\end{equation}
From Equations \eqref{ec-22} and \eqref{ec-23}, on $\s^m$ we have
\begin{align}\label{ec-24}
  2\textnormal{d}\Phi\left( \textnormal{grad}\left(\left| \textnormal{d}\Phi\right|^2 \right)\right) =& 2\dz F\left(\stackrel{o}{ \textnormal{grad}}\left(\left| \dz F\right|^2\right)\right)\nonumber \\
  &- 2\left( 2(k_1-1)\left| \dz F_1\right|^2 + 2(k_2-1) \left| \dz F_2\right|^2\right) (k_1\Phi_1,k_2\Phi_2).
\end{align}
Further, we compute $\left|\tau(\Phi) \right|^2$, that is given by
\begin{equation}\label{ec-24.1}
\left|\tau(\Phi) \right|^2 = \left|\tau(\Phi_1) \right|^2 + \left|\tau(\Phi_2) \right|^2.
\end{equation}
From Equation \eqref{ec-19} it follows that
\begin{align*}
  \left|\tau(\Phi_1) \right|^2 &= \left|\tz(F_1) - k_1(m+k_1-1)\Phi_1 \right|^2 \\
   & = \left|\tz(F_1) \right|^2 + r_1^2k_1 ^2 (m+k_1-1)^2 - 2k_1(m+k_1-1) \left\langle \tz(F_1) ,\Phi_1 \right\rangle.
\end{align*}

As 
\begin{equation*}
\tau(\Phi_1) = \tau(\varphi_1) - \frac{1}{r_1^2}|\d \varphi_1|^2\Phi_1 = \tau(\varphi_1) - \frac{1}{r_1^2}|\d \Phi_1|^2\Phi_1.
\end{equation*}
using Equation \eqref{ec-19} we obtain
\begin{equation*}
   \tz\left(F_1\right) = \tau\left(\varphi_1 \right)- \frac{1}{r_1^2} \left| \textnormal{d} \Phi_1 \right|^2\cdot \Phi_1 + k_1(m+k_1-1)\Phi_1
\end{equation*}
Then, it follows that 
\begin{align}\label{ec-19.1}
  \left\langle \tz(F_1),\Phi_1 \right\rangle & = -\left| \textnormal{d} \Phi_1 \right|^2 + r_1^2k_1 (m+k_1-1)\\
   & = - \left|\dz F_1 \right|^2 + k_1 ^2 r_1^2 + r_1^2k_1 (m+k_1-1).\nonumber
\end{align}
Thus, on $\s^m$ we have
\begin{equation}\label{ec-24.2}
 \left|\tau(\Phi_1) \right|^2 =  \left|\tz(F_1)\right |^2 + r_1^2k_1^2 (m+k_1-1)^2 - 2k_1(m+k_1-1)\left(-\left| \dz F_1 \right|^2 + r_1^2k_1 (m+2k_1-1)\right).
\end{equation}
Similar we have for $\Phi_2$
\begin{equation}\label{ec-24.3}
 \left|\tau(\Phi_2) \right|^2 =  \left|\tz(F_2)\right |^2 + r_2^2k_2^2 (m+k_2-1)^2 - 2k_2(m+k_2-1)\left(-\left| \dz F_2 \right|^2 + r_2^2k_2 (m+2k_2-1)\right).
\end{equation}
Then, from Equations \eqref{ec-24.1}, \eqref{ec-24.2} and \eqref{ec-24.3} we obtain
\begin{align*}
  \left|\tau(\Phi) \right|^2 =&  \left|\tz(F_1)\right |^2 + \left|\tz(F_2)\right |^2  + 2k_1(m+k_1-1) \left| \dz F_1 \right|^2 + 2k_2(m+k_2-1) \left| \dz F_2 \right|^2\\
   & + r_1^2k_1^2 (m+k_1-1)^2 + r_2^2k_2^2 (m+k_2-1)^2\\
   & - 2r_1^2k_1^2 (m+k_1-1)(m+2k_1-1) - 2 r_2^2k_2^2 (m+k_2-1)(m+2k_2-1)\\
   =& \left|\tz(F_1)\right |^2 + \left|\tz(F_2)\right |^2  + 2k_1(m+k_1-1) \left| \dz F_1 \right|^2 + 2k_2(m+k_2-1) \left| \dz F_2 \right|^2\\
   &-r_1^2k_1^2\left(3k_1^2+4k_1(m-1)+(m-1)^2\right) -r_2^2k_2^2\left(3k_2^2+4k_2(m-1)+(m-1)^2\right).
\end{align*}
Next, using Equation \eqref{ec-20} it follows that
\begin{align*}
  \tau_2(\Phi) =& \ \Delta \Delta \Phi \\
               =& \left(\Delta \Delta \Phi_1, \Delta \Delta \Phi_2\right)\\
               =&  \left(\Dz\Dz F_1 + 2 \left(mk_1+k_1^2-3k_1 - m +3\right)\Dz F_1 + k_1^2 \left(m+k_1-1\right)^2\Phi_1,\right.\\
                &  \left.\Dz \Dz F_2 + 2 \left(mk_2+k_2^2-3k_2 - m +3\right)\Dz F_2 + k_2^2 \left(m+k_2-1\right)^2\Phi_2 \right).
\end{align*}
From Equation \eqref{ec-21} we obtain
\begin{align*}
  \Delta\left( \left| \textnormal{d}\Phi \right|^2 \right) =&\Delta \left( \left| \dz F \right|^2 \right) \\
   =&\Dz  \left( \left| \dz F \right|^2 \right) - (1-m)\overline r\left( \left| \dz F \right|^2 \right) + \overline r\left( \overline r\left(\left| \dz F \right|^2\right) \right)\\
   =&\Dz  \left( \left| \dz F \right|^2 \right) - (1-m)\left( 2(k_1-1)\left| \dz F_1 \right|^2 +  2(k_2-1)\left| \dz F_2 \right|^2\right)\\
   & + 4(k_1-1)^2 \left| \dz F_1 \right|^2 + 4(k_2-1)^2 \left| \dz F_2 \right|^2\\
   =&\Dz  \left( \left| \dz F \right|^2 \right) + 2(k_1-1)(m+2k_1-3)\left| \dz F_1 \right|^2 + 2(k_2-1)(m+2k_2-3)\left| \dz F_2 \right|^2
\end{align*}
In the following, first we will compute $\theta$, and then we will compute $\textnormal{div}\theta^\sharp$.
\begin{align*}
\theta\left (X\right) &= \left\langle\d\varphi\left (X\right),\tau(\varphi)\right\rangle = \left\langle\d\Phi\left (X\right),\tau(\Phi)\right\rangle\\
          &= \left\langle\dz F\left (X\right),\tz(F)-k(m+k-1)\Phi\right\rangle\\
          &= \left\langle\dz F\left (X\right),\tz(F)\right\rangle, \quad \textnormal{ on } \s^m.
\end{align*}
We know that 
\begin{equation*}
  \textnormal{div } \theta^{\sharp}=\sum_{k=1}^{m}\left\langle X_i, \nabla_{X_i}\theta^{\sharp}\right\rangle.
\end{equation*}
Then, at $\overline{x}$ we have
\begin{align*}
 \textnormal{div } \theta^{\sharp} & = \sum_{i=1}^{m}X_i\left\langle X_i, \theta^{\sharp}\right\rangle \\
   & = \sum_{i=1}^{m}X_i\left( \theta \left(X_i\right) \right) = \sum_{i=1}^{m}X_i\left\langle \dz F (X_i), \tz(F) \right\rangle   \\
   & = \sum_{i=1}^{m} \left\{ \left\langle\nz _{X_i} \dz F (X_i), \tz(F) \right\rangle + \left\langle \dz F (X_i),\nz _{X_i} \tz(F) \right\rangle  \right\}\\
   & =  \sum_{i=1}^{m} \left\{ \left\langle \left(\stackrel{o}{\nabla} \dz F\right) \left(X_i,X_i\right) + \dz F\left(\nz _{X_i},X_i  \right), \tz(F) \right\rangle + \left\langle \dz F (X_i),\nz _{X_i} \tz(F) \right\rangle  \right\} \\
  & = \sum_{i=1}^{m} \left\{ \left\langle \left(\stackrel{o}{\nabla} \dz F\right) \left(X_i,X_i\right) - rF, \tz(F) \right\rangle \right\}+ \left\langle \dz F, \dz  \left( \tz(F)\right) \right\rangle  - \left\langle r(F), r\left(\tz(F)\right)\right\rangle.
\end{align*}
Further,
\begin{align*}
  \textnormal{div } \theta^{\sharp}= & \left\langle\Dz  F + r\left(rF\right) - rF +m rF,\Dz  F\right\rangle + \left\langle \dz F, \dz  \left( \tz(F)\right) \right\rangle -  \left\langle r(F), r\left(\tz(F)\right)\right\rangle \\
   =&  \left\langle\Dz  F + (m-1)(k_1\Phi_1,k_2\Phi_2) + (k_1 ^2 \Phi_1, k_2^2 \Phi_2),\Dz  F\right\rangle\\
   & - \left\langle \dz F, \dz  \left(\Dz (F)\right) \right\rangle -  \left\langle \left(k_1  \Phi_1, \Phi_2 F_2\right), \left( (k_1-2)\Dz F_1, (k_2-2)\Dz F_2\right)\right\rangle \\
   =& \left|\Dz F \right|^2 + (m-1)k_1\left\langle \Phi_1,\Dz F_1\right\rangle+(m-1)k_2\left\langle \Phi_2,\Dz F_2\right\rangle\\
  & +k_1 ^2\left\langle \Phi_1,\Dz F_1\right\rangle + k_2 ^2 \left\langle \Phi_2,\Dz F_2\right\rangle - \left\langle \dz F, \dz  \left(\Dz (F)\right) \right\rangle \\
  & +k_1 (k_1-2)\left\langle \Phi_1,\Dz F_1\right\rangle + k_2 (k_2-2) \left\langle \Phi_2,\Dz F_2\right\rangle \\
  =& \left|\Dz F \right|^2 - \left\langle \dz F, \dz  \left(\Dz (F)\right) \right\rangle +k_1 (m+2k_1-3)\left\langle \Phi_1,\Dz F_1\right\rangle\\
   &+ k_2 (m+2k_2-3)\left\langle \Phi_2,\Dz F_2\right\rangle.
\end{align*}
Then, using Equation \eqref{ec-19.1} we obtain
\begin{align*}
  \textnormal{div } \theta^{\sharp} =& \left|\Dz F \right|^2 - \left\langle \dz F, \dz  \left(\Dz (F)\right) \right\rangle\\
  & + k_1 (m+2k_1-3)\left( \left|\dz F_1 \right| ^2 - r_1^2k_1^2 - r_1^2 k_1(m+k_1-1) \right) \\
   & + k_2 (m+2k_2-3)\left( \left|\dz F_2 \right| ^2 - r_2^2k_2^2 - r_2^2 k_2(m+k_2-1) \right).
\end{align*}

We know that $\dz\Dz  =\Dz  \dz  $. It follows that
\begin{equation*}
\left\langle \dz F, \dz  \left(\Dz F\right) \right\rangle = \left\langle \dz F,\Dz  \left( \dz F\right) \right\rangle.
\end{equation*}
From the Weitzenb$\stackrel{..}{\textnormal{o}}$ck formula for $\dz F$ we get 
\begin{equation*}
  \frac{1}{2}\Dz  \left( \left| \dz F \right|^2 \right) = \left\langle \dz F,\Dz \dz F \right\rangle + \left|\nz \dz F \right|^2.
\end{equation*}
It follows that
\begin{equation*}
\left \langle \dz F,\Dz  \left( \dz F\right) \right \rangle =  \frac{1}{2}\Dz  \left( \left| \dz F \right|^2 \right) + \left|\nz \dz F \right|^2.
\end{equation*}
So, using the above result we have
\begin{align*}
 \textnormal{div } \theta^{\sharp}= & \left|\Dz F \right|^2 -  \frac{1}{2}\Dz  \left( \left| \dz F \right|^2 \right) - \left|\nz \dz F \right|^2 \\
  & + k_1 (m+2k_1-3)\left( \left|\dz F_1 \right| ^2  - r_1^2 k_1(m+2k_1-1) \right) \\
   & + k_2 (m+2k_2-3)\left( \left|\dz F_2 \right| ^2  - r_2^2 k_2(m+2k_2-1) \right).
\end{align*}
We replace in Equation \eqref{ec-1} the terms we have computed and we obtain
\begin{align*}
  \tau_2(\varphi) =&\left(\Dz\Dz F_1 + 2(mk_1+k_1^2-3k_1-m+3)\Dz  F_1 + k_1^2(m+k_1-1)^2 \Phi_1,\right.\\
  &\left.\Dz\Dz F_2 + 2(mk_2+k_2^2-3k_2-m+3)\Dz  F_2 + k_2^2(m+k_2-1)^2 \Phi_2 \right)  \\
   & + 2\left( \left|\dz F_1 \right| ^2 + \left|\dz F_2 \right| ^2 -k_1^2r_1^2-k_2^2r_2^2\right)\\
   & \cdot \left( -\Dz F_1 + k_1(1-m-k_1)\Phi_1,  -\Dz F_2 + k_2(1-m-k_2)\Phi_2\right)  \\
   & +\left\{ -2\Dz \left( \left|\dz F \right| ^2 \right) - 2 \left|\stackrel{o}{\nabla} \dz F \right| ^2 + \left|\Dz  F \right|^2 +2(m+2k_1-3)\left|\dz F_1 \right| ^2\right. \\
   & \left. +2(m+2k_2-3)\left|\dz F_2 \right| ^2 -r_1^2k_1^2(m^2+4mk_1-6m+5k_1^2-7k_1+5) \right.\\
   & \left. - r_2^2k_2^2(m^2+4mk_2-6m+5k_2^2-7k_2+5) -2k_1(m+k_1-1)\left|\dz F_1 \right| ^2\right.\\
   &\left. - 2k_2(m+k_2-1)\left|\dz F_2 \right| ^2+2\left( \left|\dz F_1 \right| ^2 + \left|\dz F_2 \right| ^2 - k_1^2r_1^2 - k_2^2r_2^2 \right)^2\right\}\Phi\\
   &-4\left( (k_1-1)\left|\dz F_1 \right| ^2 + (k_2-1) \left|\dz F_2 \right| ^2\right)(k_1\Phi_1,k_2\Phi_2)\\
   & +2\dz F\left( \stackrel{o}{\textnormal{grad}}\left( \left|\dz F \right| ^2  \right)\right).
\end{align*}
\end{proof}

\begin{remark}
In the case $k_1 = k_2 = k$, we obtain a particular case of Theorem \eqref{th4} for the case when the radius is $r=1$.
\end{remark}

\begin{example}To test the Theorem \eqref{th7}, we now construct an explicit map from $\s^1$ to $\s^3$. Although this map meets the conditions of the theorem, we verify its behavior through a direct computation. 

As we already discussed in Example \eqref{ex1},  an orthogonal basis, with respect to the usual product, for the linear space of homogeneous harmonic polynomials of degree $k$ in 2 variables is given in polar coordinates by $r^k\cos(k\theta)$ and $r^k\sin(k\theta)$. 

Let $z=x+iy$ and 
\begin{equation*}
P_k(x,y)=\textnormal{Re}(z^k) \quad \textnormal{ and } \quad   Q_k(x,y)=\textnormal{Im}(z^k).
\end{equation*}
Then $P_k(x,y)$ and $Q_k(x,y)$ form a basis for the linear space of homogeneous harmonic polynomials of degree $k$ in 2 variables. It follows that for the $k$-form $G_k:\r^2\to\r^2$ given by $G_k=\left(P_k, Q_k\right)$ we have on $\r^2$
\begin{align}\label{ec-30}
\left|\dz G_k \right|^2 &= 2\cdot k^2 \left| z \right|^{2(k-1)} \\
                        &= 2\cdot k^2 (x^2+y^2)^{k-1}\nonumber
\end{align}

Now, let $k_1$ and $k_2$ be non-negative integers, $k_1\neq k_2$. We consider the degrees $k_1$ and $k_2$ and we define the vector functions $F_{1}, \ F_{2} :\r^2\to\r^2$ by $F_{1} = r_1G_{k_1}$ and $F_{2} = r_2G_{k_2}$, such that $r_1^2+r_2^2=1$. Now we define the vector function $F:\r^2\to\r^4$ defined by $F=\left( F_{1}, F_{2}\right)$. We want to see when the map $\varphi$ defined as in Equation \eqref{ec-12}, i.e. the restriction of $F$, is proper biharmonic.

It is clear that 
\begin{equation}\label{ec-31}
\Dz F_{1} = \Dz F_{2}  = 0.
\end{equation}

From Equation \eqref{ec-30} we have on $\r^{m+1}$
\begin{align}\label{ec-32}
\left|\dz F_{1} \right|^2 &= 2 r_1^2 k_1^2 (x^2+y^2)^{k_1-1}. \\
\left|\dz F_{2} \right|^2 &= 2 r_2^2 k_2^2 (x^2+y^2)^{k_2-1}.\nonumber
\end{align}

As $F_1$ and $F_2$ are harmonic, from Proposition \eqref{pr6} we obtain on $\s^m$
\begin{align}\label{ec-33}
\Dz\left(\left|\dz F_{1}\right|^2\right) =&- 8r_1^2 k_1^2\left(k_1-1\right), \\
\Dz\left(\left|\dz F_{2}\right|^2\right) =&- 8r_2^2 k_2^2\left(k_2-1\right),\nonumber
\end{align}
and 
\begin{align}\label{ec-34}
\left| \nz \dz F_{1}\right|^2 =& 8r_1^2 k_1^2\left(2k_1-1\right), \\
\left| \nz \dz F_{2}\right|^2 =& 8r_2^2 k_2^2\left(2k_2-1\right).\nonumber
\end{align}

Then by direct computations, on $\s^{m}$ we have
\begin{align}\label{ec-35}
\dz F_1\left(\stackrel{o}{\textnormal{grad}}\left(\left|\dz F_{1}\right|^2\right)\right) =& 4r_1^2 k_1^2\left(k_1-1\right)F_{k_1}, \\
\dz F_2\left(\stackrel{o}{\textnormal{grad}}\left(\left|\dz F_{2}\right|^2\right)\right) =& 4r_2^2 k_2^2\left(k_2-1\right)F_{k_2}.\nonumber
\end{align}

Replacing Equations \eqref{ec-31}, ..., \eqref{ec-35} in Equation \eqref{ec-14} we obtain 
\begin{align*}
\tau_2(\varphi) =& (2r_1^2-1)(k_1-k_2)^2(k_1+k_1)^2\left((r_1^2-1)F_{1}, r_1^2 F_{2}\right). 
\end{align*}
As $k_1\neq k_2$, from Corollary \eqref{cor1} it follows that the map $\f$ is not harmonic. Therefore, the map  $\varphi$ is proper biharmonic if and only if $r_1 = r_2= 1/\sqrt{2}$.

We note that, as in Example \eqref{ex1}, the proper biharmonic map $\varphi$ constructed is part of the same larger family of proper biharmonic curves in $\s^3$ from \cite{CMO01} .
\end{example}

\begin{example} Now, we test Theorem \eqref{th7} for the case when $F_1$ and $F_2$ are not harmonic.

Let $F_1:\r^4\to\r^4$ and $F_2:\r^4\to\r$ given by
\begin{align*}
F_1\left(\overline x\right) =& \left(\frac{1}{\sqrt{2}}\left(\left(x^1\right)^2 + \left(x^2\right)^2 - \left(x^3\right)^2 - \left(x^4\right)^2\right), \sqrt{2}\left(x^1x^3 - x^2x^4\right),\right.\\
                           & \left. \frac{1}{\sqrt{2}}\left(\sqrt{2}\left(x^1x^4 + x^2x^3\right)-\frac{1}{2}\left|\overline x\right|^2\right),\frac{1}{\sqrt{2}}\left(\sqrt{2}\left(x^1x^4 + x^2x^3\right)+\frac{1}{2}\left|\overline x\right|^2\right)\right).
\end{align*}
and
$$
F_2(\overline x) = \frac{1}{2}\left|\overline x\right|^4.
$$
It is easy to see that $\left|F_1(\overline x)\right|^2 = (3/4)\left|\overline x\right|^4$. 

Now, consider the map $\varphi$ as in Equation \eqref{ec-12}. 
By direct calculations, on $\r^4$ we have
$$
\left|\dz F_1\right|^2 = 7\left|\overline x\right|^2 \quad \textnormal{ and } \quad \left|\dz F_2\right|^2 = 4\left|\overline x\right|^6,
$$
and on $\s^3$ we have
\begin{align*}
  \Dz F_1 =& \left(0,0,\frac{4}{\sqrt{2}}, -\frac{4}{\sqrt{2}}\right), & \Dz \Dz F_1&=0, \\
  \Dz F_2 =& -12, & \Dz\Dz F_2 &= 96.
\end{align*}

Since $F_1$ is a quadratic form, using Proposition \eqref{pr2} it follows that
$$
-2\Dz \left( \left|\dz F_1 \right| ^2 \right) - 2 \left|\stackrel{o}{\nabla} \dz F_1 \right| ^2 + \left|\Dz  F_1 \right|^2 = 72.
$$
For $F_2$ by direct calculations we obtain
$$
-2\Dz \left( \left|\dz F_2 \right| ^2 \right) - 2 \left|\stackrel{o}{\nabla} \dz F_2 \right| ^2 + \left|\Dz  F_2 \right|^2 = 432.
$$
Then, on $\s^3$ we have
$$
\dz F\left(\stackrel{o}{\textnormal{grad}}\left(\left|\dz F\right|^2\right)\right) = 76\left(F_1, 1\right).
$$
We replace all in Equation \eqref{ec-14} and we obtain $\tau_2(\varphi) = 0$ on $\s^3$, thus $\varphi$ is biharmonic. 
Using Theorem \eqref{th6}, by direct calculations we obtain 
$$
\tau(\varphi) = \left(0,0,-\frac{4}{\sqrt{2}},\frac{4}{\sqrt{2}},12\right) - \left(4F_1, 10\right).
$$
Thus, $\varphi$ is not harmonic, therefore it is proper-biharmonic.
\end{example}
\begin{remark}
We can rewrite $\varphi$ as 
\begin{align*}
\varphi\left(\overline x\right) =& \left(\frac{1}{\sqrt{2}}\left(\left(x^1\right)^2 + \left(x^2\right)^2 - \left(x^3\right)^2 - \left(x^4\right)^2\right), \sqrt{2}\left(x^1x^3 - x^2x^4\right),\right.\\
                           & \left. \frac{1}{\sqrt{2}}\left(\sqrt{2}\left(x^1x^4 + x^2x^3\right)-\frac{1}{2}\left|\overline x\right|^2\right),\frac{1}{\sqrt{2}}\left(\sqrt{2}\left(x^1x^4 + x^2x^3\right)+\frac{1}{2}\left|\overline x\right|^2\right), \frac{1}{2}\left|\overline x\right|^2\right).
\end{align*} 
We note that this example first appeared in \cite{A23}. We note that although the last component is constant on the sphere, the first $4$ components do not form an harmonic map.

However, by applying orthogonal transformations on the components of $\varphi$ we can bring the map to
\begin{align*}
\varphi\left(\overline x\right) =& \left(\frac{1}{\sqrt{2}}\left(\left(x^1\right)^2 + \left(x^2\right)^2 - \left(x^3\right)^2 - \left(x^4\right)^2\right), \sqrt{2}\left(x^1x^3 - x^2x^4\right),\right.\\
                           & \left. \sqrt{2}\left(x^1x^4 + x^2x^3\right), \frac{1}{\sqrt{2}}\left(\left(x^1\right)^2 + \left(x^2\right)^2 + \left(x^3\right)^2 + \left(x^4\right)^2\right), 0\right).
\end{align*}
The first $3$ components of $\varphi$ form an harmonic map $\psi:\s^3\to \s^2\left(1/\sqrt{2}\right)$ and $\varphi$ is of the construction type outlined in Theorem \eqref{th1}.
\end{remark}

\begin{example}
Let $F_1:\r^3\to\r^5$ a quadratic form such that its restriction $\varphi_1$ is the Veronese map 
\begin{equation*}
  F_1(x,y,z)=r_1\left(\frac{1}{2}\left( x^2+y^2-2z^2 \right), \frac{\sqrt{3}}{2}\left(x^2-y^2 \right),\sqrt{3}xy,\sqrt{3}xz, \sqrt{3}yz\right).
\end{equation*}
and consider $F_2:\r^3\to\r^7$ a form of degree $3$ (that first appeared in \cite{CCK71}), given by  
\begin{align*}
  F_2(x,y,z)=&r_2\left( \frac{1}{2}z\left( -3x^2-3y^2+2z^2\right),\frac{\sqrt{6}}{4}x\left(-x^2-y^2+4z^2 \right), \frac{\sqrt{15}}{2}z\left(x^2-y^2 \right), \right.\\ &\left.\frac{\sqrt{10}}{4}x\left(x^2-3y^2\right),\frac{\sqrt{6}}{4}y\left(-x^2-y^2+4z^2 \right), \sqrt{15}xyz, \frac{\sqrt{10}}{4} y\left(3x^2-y^2\right)\right),
\end{align*}
such that $r_1^2+r_2^2=1$.

Consider the map $\varphi$ as in Equation \eqref{ec-12}. 

In this case, we have on $\r^3$
\begin{equation*}
\Dz F_1=0, \quad \Dz F_2=0
\end{equation*}
and on $\r^3$
\begin{equation*}
\left|\dz F_1\right|^2 = 10 r_1^2|\overline x|^2, \quad \left|\dz F_2\right|^2 = 21 r_2^2|\overline x|^4.
\end{equation*}
As $F_1$ is a quadratic form, using Proposition \eqref{pr2} it follows that on $\s^2$ we have
$$
-2\Dz \left( \left|\dz F_1 \right| ^2 \right) - 2 \left|\stackrel{o}{\nabla} \dz F_1 \right| ^2 + \left|\Dz  F_1 \right|^2 = 60r_1^2.
$$
For $F_2$, using Proposition \eqref{pr6} we obtain on $\s^2$
$$
-2\Dz \left( \left|\dz F_2 \right| ^2 \right) - 2 \left|\stackrel{o}{\nabla} \dz F_2 \right| ^2 + \left|\Dz  F_2 \right|^2 = 420r_2^2.
$$
Further, on $\s^m$ we have
$$
\dz F\left(\stackrel{o}{\textnormal{grad}}\left(\left|\dz F\right|^2\right)\right) = 2(10r_1^2+42r_2^2)\left(2F_1,3F_2\right).
$$
We replace in Equation \eqref{ec-14} and we obtain 
$$
\tau_2(\f) = 12(1-2r_1^2)\left(F_1, -2F_2\right).
$$
Therefore, $\tau_2(\varphi) = 0$ if and only if $r_1 = r_2 = 1/\sqrt{2}$.
\end{example}

\begin{theorem}\label{th8}
If $F_1:\r^{m+1}\to\r^{n_1+1}$ and $F_2:\r^{m+1}\to\r^{n_2+1}$ are two harmonic forms of degree $k_1$, respectively $k_2$, then the map $\varphi$ given in Equation \eqref{ec-12} is proper biharmonic if and only if $r_1 = r_2 = 1/\sqrt{2}$ and $k_1\neq k_2$. 
\end{theorem}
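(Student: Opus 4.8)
The plan is to feed the harmonic hypotheses into the bitension field formula \eqref{ec-14}, reduce the vanishing of $\tau_2(\varphi)$ to two scalar equations, and solve them. First I would separate biharmonicity from properness. Since $F_1$ and $F_2$ are harmonic we have $\Dz F_1 = 0$ and $\Dz F_2 = 0$, so Corollary \eqref{cor1} applies and shows that $\varphi$ is harmonic if and only if $k_1 = k_2$. Hence $\varphi$ fails to be harmonic exactly when $k_1 \neq k_2$, and the whole problem reduces to deciding, in that case, when $\tau_2(\varphi) = 0$.

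Next I would simplify \eqref{ec-14}. The harmonicity of $F_1, F_2$ annihilates several terms at once: $\Dz\Dz F = 0$, every summand carrying a factor $\Dz F_1$ or $\Dz F_2$ drops out, and $|\Dz F|^2 = 0$. By Proposition \eqref{pr3} each $L_i := |\dz F_i|^2 = k_i r_i^2(m+2k_i-1)$ is constant on $\s^m$, so $|\d\Phi|^2 = L_1 + L_2 - (k_1^2 r_1^2 + k_2^2 r_2^2)$ is constant; consequently its gradient vanishes, so the term $\dz F(\stackrel{o}{\textnormal{grad}}(|\dz F|^2))$ cancels against the accompanying radial correction $-4\big((k_1-1)L_1 + (k_2-1)L_2\big)(k_1\Phi_1,k_2\Phi_2)$. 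Finally Proposition \eqref{pr6} supplies the constant values of $\Dz(|\dz F_i|^2)$ and $|\nz \dz F_i|^2$. After these substitutions $\tau_2(\varphi)$ collapses to the form $(A_1\Phi_1, A_2\Phi_2)$ with $A_1, A_2$ constants depending only on $m, k_1, k_2, r_1$. Since $\Phi_i$ is the position vector of a point of $\s^{n_i}(r_i)$ it never vanishes, so $\tau_2(\varphi) = 0$ is equivalent to $A_1 = A_2 = 0$.

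The heart of the argument is solving this pair. Writing $a_i := k_i(m+k_i-1)$ and $D := |\d\Phi|^2 = L_1 + L_2 - k_1^2 r_1^2 - k_2^2 r_2^2$, the constants take the shape $A_i = a_i^2 - 2D a_i + C$, where $C$ is a quantity symmetric in the two indices and hence common to both, so that $A_1 - A_2 = (a_1 - a_2)(a_1 + a_2 - 2D)$. A short computation gives $a_1 - a_2 = (k_1 - k_2)(m + k_1 + k_2 - 1)$ and $D = r_1^2 a_1 + r_2^2 a_2$. In the proper case $k_1 \neq k_2$ the factor $a_1 - a_2$ is nonzero, so $A_1 = A_2$ forces $a_1 + a_2 = 2(r_1^2 a_1 + r_2^2 a_2)$; substituting $r_2^2 = 1 - r_1^2$ this reduces to $(a_1 - a_2)(2r_1^2 - 1) = 0$, that is $r_1^2 = r_2^2 = 1/2$. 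Conversely, at $r_1^2 = r_2^2 = 1/2$ one has $D = (a_1 + a_2)/2$, whence $A_1 = A_2 = (a_1^2 + a_2^2)/2 + C$, and the remaining task is to check that this common value is identically zero. This is the one nontrivial algebraic identity on which the theorem rests, and I expect it to be the main obstacle, since it requires carrying the explicit coefficients of \eqref{ec-14} through Propositions \eqref{pr3} and \eqref{pr6} without error. Granting it, the theorem follows: if $r_1 = r_2 = 1/\sqrt{2}$ and $k_1 \neq k_2$ then $A_1 = A_2 = 0$ and $\varphi$ is non-harmonic, hence proper biharmonic; while if $\varphi$ is proper biharmonic then $k_1 \neq k_2$ (else it is harmonic by Corollary \eqref{cor1}) and $A_1 = A_2 = 0$, which forces $r_1^2 = r_2^2 = 1/2$.
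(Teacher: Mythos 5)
Your reduction of \eqref{ec-14} is sound and follows the same route as the paper: harmonicity of $F_1,F_2$ kills $\Dz\Dz F$, every term carrying a factor $\Dz F_i$, and $|\Dz F|^2$; Proposition \eqref{pr3} makes both energy densities constant; and the gradient term does cancel the radial correction, since the two together equal $2\,\d\Phi\left(\textnormal{grad}\left(|\d\Phi|^2\right)\right)=0$. Your necessity argument is complete and in fact cleaner than the paper's: from $A_i=a_i^2-2Da_i+C$ with common $C$ you get $A_1-A_2=(a_1-a_2)(a_1+a_2-2D)=(a_1-a_2)^2(1-2r_1^2)$, so $k_1\neq k_2$ together with $\tau_2(\varphi)=0$ forces $r_1^2=1/2$, with no need to know $C$. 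The paper instead grinds out all coefficients via \eqref{ec-40}--\eqref{ec-43} and reads everything off the factorized formula \eqref{ec-44}.

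The gap is the converse. You reduce sufficiency to the claim that the common value $A_1=A_2$ vanishes at $r_1^2=r_2^2=1/2$, call it ``the one nontrivial algebraic identity on which the theorem rests,'' and then grant it. That identity is precisely the computational content of the theorem: without it you have only shown that at $r_1=r_2=1/\sqrt{2}$ the bitension field is \emph{some} constant multiple of $\Phi$, not that it is zero, so the backward implication is unproven. (Your stated expression for the common value is also off: with $D=(a_1+a_2)/2$ one gets $A_1=A_2=C-a_1a_2$, not $(a_1^2+a_2^2)/2+C$; this slip is harmless only because you never use it.) The paper closes this step by direct substitution into \eqref{ec-14}, arriving at \eqref{ec-44}. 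Within your framework there is a way to finish without ever computing $C$: the bitension field is a section of $\varphi^{-1}T\s^n$, hence pointwise orthogonal to the position vector $\Phi$, and for $\tau_2(\varphi)=(A_1\Phi_1,A_2\Phi_2)$ this gives $r_1^2A_1+r_2^2A_2=0$. Combined with $A_1=A_2$ at $r_1=r_2=1/\sqrt{2}$, this forces $A_1=A_2=0$. Better yet, the orthogonality relation together with your difference formula determines both constants for arbitrary radii, namely $A_1=r_2^2(a_1-a_2)^2(1-2r_1^2)$ and $A_2=-r_1^2(a_1-a_2)^2(1-2r_1^2)$, which recovers \eqref{ec-44} exactly and would make your argument a complete, computation-free proof of the theorem.
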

\begin{proof}
Using Theorem \eqref{th3} the conditions $\Dz F_1 = 0$ and $\Dz F_2 = 0$ imply that on $\r^{m+1}$ we have
\begin{align}\label{ec-40}
\left|\dz F_1\right|^2 & = k_1r_1^2(m+2k_1-1)\left|\overline x\right|^{2(k_1-1)}, \\
\left|\dz F_2\right|^2 & = k_2r_2^2(m+2k_2-1)\left|\overline x\right|^{2(k_1-1)}. \nonumber
\end{align}
Now, using Proposition \eqref{pr6} we obtain on $\s^m$ 
\begin{align}\label{ec-41}
\Dz\left(\left|\dz F_1\right|^2\right) & =- 2r_1^2k_1(k_1-1)(m+2k_1-1)(m+2k_1-3), \\
\Dz\left(\left|\dz F_2\right|^2\right) & =- 2r_2^2k_2(k_2-1)(m+2k_2-1)(m+2k_2-3).\nonumber
\end{align}
and 
\begin{align}\label{ec-42}
\left|\nz \dz F_1\right|^2 &=r_1^2k_1(k_1-1)\left(m^2-4m+3+4k_1(m-2)+4k_1^2\right),\\
\left|\nz \dz F_2\right|^2 &=r_2^2k_2(k_2-1)\left(m^2-4m+3+4k_2(m-2)+4k_2^2\right).\nonumber
\end{align} 
Next, using Equation \eqref{ec-40} we obtain on $\s^m$ by direct computations
\begin{align}\label{ec-43}
2\dz F\left(\stackrel{o}{ \textnormal{grad}}\left(\left| \dz F\right|^2\right)\right)=  &- 2\left( 2(k_1-1)\left| \dz F_1\right|^2 + 2(k_2-1) \left| \dz F_2\right|^2\right) (k_1\Phi_1,k_2\Phi_2).
\end{align}

Taking into account that $r_1^2+r_2^2=1$ and replacing Equations \eqref{ec-40}, ..., \eqref{ec-43} in Equation \eqref{ec-14} we obtain by direct calculations that the bitension field of the map $\varphi$ is given by
\begin{align}\label{ec-44}
\tau_2(\varphi) =& (2r_1^2-1)(k_1-k_2)^2(m+k_1+k_1-1)^2\left((r_1^2-1)\Phi_1, r_1^2 \Phi_2\right). 
\end{align}
We recall from Corollary \eqref{cor1}  that  $k_1 = k_2$ is equivalent in this case to $\varphi$ being harmonic. Thus, we impose $k_1\neq k_2$.
From Equation \eqref{ec-44} it follows directly that the map $\varphi$ is proper biharmonic if and only if $r_1 = 1/\sqrt{2}$.
\end{proof}

\begin{remark}
The above theorem can be viewed as a particular case of a more general result recently established in \cite{B25} (see Theorem 1.3). However, in \cite{B25}, the author employs a different method in a broader setting, while in this paper Theorem \eqref{th8} follows as an application to Theorem \eqref{th7}.
\end{remark}

\begin{corollary}\label{cor5}
If $F_1:\r^{m+1}\to\r^{n_1+1}$ and $F_2:\r^{m+1}\to\r^{n_2+1}$ are forms of degree $k_1$, respectively $k_2$, each of them of minimal degree in its class, such that their restrictions $\varphi_1:\s^m\to\s^{n_1}$ and $\varphi_2:\s^m\to\s^{n_2}$ are harmonic, then the map $\f$ given in Equation \eqref{ec-12} is proper biharmonic if and only if $k_1 \neq k_2$ and $r_1 =r_2 =1/\sqrt{2}$. 
\end{corollary}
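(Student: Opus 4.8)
The plan is to reduce the statement directly to Theorem \eqref{th8}, in exactly the way that Corollary \eqref{cor3} reduces to Corollary \eqref{cor1}. The only discrepancy between the hypotheses here and those of Theorem \eqref{th8} is that Theorem \eqref{th8} requires $F_1$ and $F_2$ to be harmonic forms, i.e. $\Dz F_1 = 0$ and $\Dz F_2 = 0$, whereas here $F_1$ and $F_2$ are assumed only to be of minimal degree in their classes with harmonic restrictions $\varphi_1$ and $\varphi_2$. Closing this gap is precisely the purpose of Proposition \eqref{pr5}.

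First I would apply Proposition \eqref{pr5} separately to $F_1$ and to $F_2$. Since each $F_i$ is of minimal degree in its class and its restriction $\varphi_i$ is harmonic, Proposition \eqref{pr5} guarantees that $\varphi_i$ has constant energy density and, crucially, that $\Dz F_i = 0$. In other words, under the minimality-of-degree assumption the harmonicity of the restriction forces the form itself to be harmonic, so the hypotheses of Theorem \eqref{th8} are met automatically.

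Having secured $\Dz F_1 = 0$ and $\Dz F_2 = 0$, I would then invoke Theorem \eqref{th8} for the harmonic forms $F_1$ and $F_2$ of degrees $k_1$ and $k_2$. This yields at once that $\varphi = \i\circ(\varphi_1,\varphi_2)$ is proper biharmonic if and only if $r_1 = r_2 = 1/\sqrt{2}$ and $k_1\neq k_2$, which is exactly the claimed conclusion.

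I do not expect any computational obstacle: both the conversion step and the final step are direct appeals to earlier results, so the argument is essentially a two-line reduction. The only point requiring mild care is to apply Proposition \eqref{pr5} with the correct radius normalization for each factor, so that the relations $\left|F_i(\overline x)\right|^2 = r_i^2\left|\overline x\right|^{2k_i}$ are respected and each $\varphi_i$ is read as a harmonic map into the sphere of the appropriate radius; this is bookkeeping rather than a genuine difficulty.
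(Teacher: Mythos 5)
Your proposal is correct and matches the paper's own proof exactly: Proposition \eqref{pr5} is applied to each $F_i$ to deduce $\Dz F_1 = \Dz F_2 = 0$, and the conclusion then follows immediately from Theorem \eqref{th8}. Your remark about the radius normalization is sensible bookkeeping, and the paper itself glosses over this point in the same way.
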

\begin{proof}
Using Proposition \eqref{pr5} it follows that $\Dz F_1 = 0$ and $\Dz F_2 = 0$. The conclusion follows from Theorem \eqref{th8}.
\end{proof}

\begin{theorem}
Let $\varphi_1:\s^m\to\s^{n_1}(r_1)$ and $\varphi_2:\s^m\to\s^{n_2}(r_2)$ be two harmonic maps with constant energy densities, such that $r_1^2+r_2^2=1$. Then the map 
\begin{equation*}
\varphi = \i \circ \left(\varphi_1,\varphi_2\right):\s^m\to\s^{n_1+n_2+1},
\end{equation*}
is proper biharmonic if and only if  $r_1 =r_2 =1/\sqrt{2}$ and $e(\f_1)\neq e(\f_2)$.
\end{theorem}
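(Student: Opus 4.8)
The plan is to reduce the statement entirely to Theorem \eqref{th8} by replacing the two harmonic maps of constant energy density by the harmonic forms that induce them, exactly as was done in the proof of Theorem \eqref{cor2}. First I would invoke Proposition \eqref{pr4}: since $\varphi_1$ and $\varphi_2$ are harmonic with constant (positive) energy densities, there are unique integers $k_1,k_2\in\mathbb{N}^*$ and vector valued functions $F_1:\r^{m+1}\to\r^{n_1+1}$, $F_2:\r^{m+1}\to\r^{n_2+1}$, each component of which is either a harmonic homogeneous polynomial of degree $k_1$, respectively $k_2$, or the null polynomial, such that $F_1$ restricts to $\varphi_1$ and $F_2$ restricts to $\varphi_2$. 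These produce the energy densities
\begin{equation*}
e(\varphi_1)=\tfrac12 k_1 r_1^2(k_1+m-1),\qquad e(\varphi_2)=\tfrac12 k_2 r_2^2(k_2+m-1),
\end{equation*}
so that $F_1$ and $F_2$ are precisely harmonic forms of degrees $k_1$ and $k_2$ fitting the setup of Equation \eqref{ec-12}.

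Next, with $F_1,F_2$ harmonic forms, I would apply Theorem \eqref{th8} directly: the induced map $\varphi=\i\circ(\varphi_1,\varphi_2)$ is proper biharmonic if and only if $r_1=r_2=1/\sqrt{2}$ and $k_1\neq k_2$. It then only remains to translate the arithmetic condition $k_1\neq k_2$ into the geometric condition $e(\varphi_1)\neq e(\varphi_2)$, and this is exactly where the hypothesis $r_1=r_2=1/\sqrt{2}$ enters. Substituting $r_1^2=r_2^2=1/2$ into the energy density formulas gives $e(\varphi_i)=\tfrac14 k_i(k_i+m-1)$ for $i=1,2$, so both energy densities are values of the single function $f(k)=\tfrac14 k(k+m-1)$.

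The key point will be the injectivity of $f$ on the positive integers: for $m\geq1$ and $k\geq1$ one has $f(k+1)-f(k)=\tfrac14(2k+m)>0$, hence $f$ is strictly increasing, and in particular injective, on $\mathbb{N}^*$. Consequently $e(\varphi_1)=e(\varphi_2)$ forces $k_1=k_2$, and the converse is clear, so that under $r_1=r_2=1/\sqrt{2}$ one has $e(\varphi_1)\neq e(\varphi_2)$ if and only if $k_1\neq k_2$. Combining this equivalence with Theorem \eqref{th8} yields that $\varphi$ is proper biharmonic precisely when $r_1=r_2=1/\sqrt{2}$ and $e(\varphi_1)\neq e(\varphi_2)$, which is the assertion. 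I do not expect a serious obstacle here, as the argument is essentially an application of Proposition \eqref{pr4} followed by Theorem \eqref{th8}; the only genuinely new ingredient is the monotonicity of $k\mapsto k(k+m-1)$ on $\mathbb{N}^*$, which is what permits reading off the inequality of degrees from the inequality of the now equal-radius energy densities.
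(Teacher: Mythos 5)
Your proposal is correct and follows essentially the same route as the paper: Proposition \eqref{pr4} to produce the unique degrees $k_1,k_2$ and harmonic forms $F_1,F_2$ restricting to $\varphi_1,\varphi_2$, then Theorem \eqref{th8}, then the equivalence of $k_1\neq k_2$ with $e(\varphi_1)\neq e(\varphi_2)$ under $r_1=r_2=1/\sqrt{2}$. The only difference is that you justify this last equivalence explicitly via the strict monotonicity of $k\mapsto k(k+m-1)$ on $\mathbb{N}^*$, a detail the paper asserts without proof.
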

\begin{proof}
As in Corollary \eqref{cor2}, since $\f_1$ and $\f_2$ are harmonic with constant energy densities, from Proposition \eqref{pr4} it follows that there exist the unique non-negative integers $k_1$ and $k_2$ such that 
\begin{align*}
e(\f_1) &= \frac{1}{2}k_1r_1^2( k_1 + m - 1), \\
e(\f_2) &= \frac{1}{2}k_2r_2^2( k_2 + m - 1),
\end{align*} 
and there exist unique vector valued functions $ F_1:\r^{m+1}\to\r^{n_1+1}$ and $ F_2:\r^{m+1}\to\r^{n_2+1}$, such that each of their components is either a harmonic homogeneous polynomial of degree $k_1$, respectively $ k_2$, or the null polynomial, and they restrict to $\f_1$, respectively $\f_2$. 
Applying Theorem \eqref{th8} it follows that $\varphi$ is proper biharmonic if and only if $k_1 \neq k_2$ and $r_1 =r_2 =1/\sqrt{2}$. In this case, as the condition $k_1 \neq k_2$ is equivalent to $e(\f_1)\neq e(\f_2)$, the conclusion follows.
\end{proof}

As an application to Theorem \eqref{th8} we construct a new family of proper biharmonic maps starting from standard minimal immersions (one can see \cite{BW03}, \cite{dCW71} or \cite{ER93}).

Let $\left\{\Phi_1,\ldots,\Phi_{n(k)}\right\}$ be a basis for the vector space of spherical harmonics of order $k$ which is orthonormal with respect to the usual product, 
 where $n(k) = (m+2k-1)\left((m+k-2)!\right)/\left(k!(m-1)\right)$. Consider the map 
$$
\Phi = c(k)\left(\Phi_1,\ldots,\Phi_{n(k)}\right):\s^m\to\r^{n(k)},
$$
where $c(k)$ is a positive constant to be chosen. The image of $\Phi$ lies in a sphere $\s^{n(k)-1}\left (r\right)$, where the radius $r = \sqrt{m/\left(k(m+k-1)\right)}$, and  we can write $\Phi = \i\circ\varphi$, where $\f:\s^m\to\s^{n(k)-1}\left (r\right)$ is harmonic. As homothetic changes of the domain or codomain metrics preserve the harmoncity and biharmonicity, we can assume that $\varphi$ maps $\s^m$ into $\s^{n(k)-1}\left(1/\sqrt{2}\right)$.
\begin{theorem}
Let $k_1\neq k_2$ be two non-negative integers and let 
$$
\varphi_1:\s^m\to\s^{n(k_1)-1}\left(\frac{1}{\sqrt{2}}\right)\quad \textnormal{ and }\quad \varphi_2:\s^m\to\s^{n(k_2)-1}\left(\frac{1}{\sqrt{2}}\right)
$$
 be two harmonic maps constructed as above. Then the map 
$$
\varphi  = i\circ\left( \varphi_1,\varphi_2\right):\s^m\to\s^{n(k_1)+n(k_2)-1}
$$
is proper biharmonic.
\end{theorem}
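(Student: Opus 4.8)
The plan is to deduce the statement directly from Theorem \eqref{th8}, whose hypotheses are tailored precisely to this situation. The only things that genuinely need checking are that the two standard minimal immersions $\varphi_1$ and $\varphi_2$ are restrictions to $\s^m$ of \emph{harmonic} forms of degrees $k_1$ and $k_2$, and that both radii equal $1/\sqrt{2}$; once this is in place, the conclusion is immediate.

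First I would make the forms $F_1$ and $F_2$ explicit. By construction each component of $\varphi_i$ is a constant multiple of a spherical harmonic of order $k_i$, and every spherical harmonic of order $k_i$ is the restriction to $\s^m$ of a harmonic homogeneous polynomial of degree $k_i$. Homogeneously extending the components and absorbing the normalising constant $c(k_i)$, which preserves harmonicity, therefore yields a vector valued function $F_i:\r^{m+1}\to\r^{n(k_i)}$ each of whose components is a harmonic homogeneous polynomial of degree $k_i$, with $F_i\big|_{\s^m}=\varphi_i$. Thus $F_1$ and $F_2$ are harmonic forms of degrees $k_1$ and $k_2$ in the sense required by Theorem \eqref{th8}. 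Equivalently, since $\varphi_i$ is harmonic with constant energy density by Proposition \eqref{pr3}, the existence of such harmonic forms is guaranteed abstractly by Proposition \eqref{pr4}; and because one starts from a \emph{full} orthonormal basis of spherical harmonics of order $k_i$, no component is the null polynomial, so $F_i$ is a genuine form of degree exactly $k_i$.

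Next I would record the radius condition. By the normalisation fixed just before the statement, both $\varphi_1$ and $\varphi_2$ take values in the small sphere of radius $1/\sqrt{2}$, so $r_1=r_2=1/\sqrt{2}$ and hence $r_1^2+r_2^2=1$. This is exactly the constraint imposed on the radii in the construction of the diagonal map $\varphi=\i\circ(\varphi_1,\varphi_2)$ in Equation \eqref{ec-12}, and it matches the identifications $n_1=n(k_1)-1$, $n_2=n(k_2)-1$, so that the target $\s^{n_1+n_2+1}$ becomes $\s^{n(k_1)+n(k_2)-1}$. Consequently $\varphi$ fits the setup of Theorem \eqref{th8} with harmonic forms $F_1,F_2$ of degrees $k_1,k_2$ and with $r_1=r_2=1/\sqrt{2}$.

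Finally, since $k_1\neq k_2$ by hypothesis, Theorem \eqref{th8} applies verbatim: under the conditions $r_1=r_2=1/\sqrt{2}$ and $k_1\neq k_2$ it asserts that $\varphi$ is proper biharmonic, with the non-harmonicity already encoded (via Corollary \eqref{cor1}, harmonicity being equivalent to $k_1=k_2$). I expect no serious obstacle: the computational core is already isolated in Theorem \eqref{th7} and its specialisation Theorem \eqref{th8}, so the present statement is essentially a clean packaging of the classical fact that standard minimal immersions arise from orthonormal bases of spherical harmonics, i.e. from harmonic forms. The only point requiring mild care is the bookkeeping that the chosen normalisation really delivers radius $1/\sqrt{2}$ rather than the unnormalised radius $\sqrt{m/(k_i(m+k_i-1))}$, which is handled by the homothetic rescaling that preserves both harmonicity and biharmonicity.
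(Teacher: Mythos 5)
Your proposal is correct and takes exactly the route the paper intends: the paper states this theorem without a separate proof, presenting it as an immediate application of Theorem \eqref{th8} to the standard minimal immersions, whose components (being constant multiples of spherical harmonics) extend to harmonic forms of degrees $k_1\neq k_2$, rescaled so that both radii equal $1/\sqrt{2}$. Your write-up simply makes explicit the bookkeeping that the paper leaves implicit.
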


Encouraged by the positive results of our initial method, we adopt a different approach, seeking to extend our previous findings in a broader context. Now we consider the special case when 
\begin{equation}\label{ec-60}
\varphi = \i \circ \left(\varphi_1,\varphi_2\right):\s^{m_1}\times\s^{m_2}\to\s^n,
\end{equation}
where $n = n_1+n_2+1$, $\i$ is the canonical inclusion of the standard product $\s^{n_1}(r_1)\times\s^{n_2}(r_2)$ in $\s^n$,  and $\varphi_1$ and $\varphi_2$ are given as in the below diagrams

\bigskip
\begin{center}
  \begin{tikzpicture}
  \matrix (m) [matrix of math nodes,row sep=3em,column sep=4em,minimum width=2em]
  {
     \r^{m_1+1} & \r^{n_1+1} \\
     \s^{m_1} & \s^{n_1}(r_1) \\};
  \path[-stealth]
    (m-2-1) edge node [left] {$\i$} (m-1-1)
    (m-1-1) edge node [above] {$F_1$} (m-1-2)
    (m-2-1) edge node [above] {$\Phi_1$} (m-1-2)
    (m-2-2) edge node [right] {$\i_1$} (m-1-2)
    (m-2-1) edge node [below] {$\varphi_1$} (m-2-2);
\end{tikzpicture} 
\begin{tikzpicture}
  \matrix (m) [matrix of math nodes,row sep=3em,column sep=4em,minimum width=2em]
  {
     \r^{m_2+1} & \r^{n_2+1} \\
     \s^{m_2} & \s^{n_2}(r_2) \\};
  \path[-stealth]
    (m-2-1) edge node [left] {$\i$} (m-1-1)
    (m-1-1) edge node [above] {$F_2$} (m-1-2)
    (m-2-1) edge node [above] {$\Phi_2$} (m-1-2)
    (m-2-2) edge node [right] {$\i_2$} (m-1-2)
    (m-2-1) edge node [below] {$\varphi_2$} (m-2-2);
\end{tikzpicture}
\end{center}
where $r_1^2 + r_2^2 = 1$ and $F_1$ and $F_2$ are forms of degree $k_1$, respectively $k_2$, i.e. on $\r^{m_1+1}$, respectively $\r^{m_2}$, we have $\left|F_1\left(\overline{x}\right)\right|^2 = r_1^2\left|\overline x\right|^{2k_1}$, respectively $\left|F_2\left(\overline{x}\right)\right|^2 = r_2^2\left|\overline x\right|^{2k_2}$.
Then, for $\varphi$ we have the following diagram
\bigskip
\begin{center}
  \begin{tikzpicture}
  \matrix (m) [matrix of math nodes,row sep=6em,column sep=7em,minimum width=4em]
  {
     \r^{m_1+m_2+1} & \r^{n_1+n_2+2} \\
     \s^{m_1}\times\s^{m_2} & \s^{n_1+n_2+1} \\};
  \path[-stealth]
    (m-2-1) edge node [left] {$\i$} (m-1-1)
    (m-1-1) edge node [above] {$F=(F_1,F_2)$} (m-1-2)
    (m-2-1) edge node [above] {$\Phi=(\Phi_1,\Phi_2)$} (m-1-2)
    (m-2-2) edge node [right] {$\i$} (m-1-2)
    (m-2-1) edge node [below] {$\varphi=\i\circ(\varphi_1,\varphi_2)$} (m-2-2);
\end{tikzpicture} 
\end{center}

\begin{theorem}\label{th10}
Consider the map $\varphi$ given in Equation \eqref{ec-60}, such that $F_1$ and $F_2$ are harmonic. Then $\varphi$ is proper biharmonic if and only if $r_1 =r_2 = 1/\sqrt{2}$ and $e\left( \varphi_1 \right) \neq e\left( \varphi_2 \right)$.
\end{theorem}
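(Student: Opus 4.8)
The plan is to exploit the product structure of the domain to reduce the whole computation to the two single-sphere situations that already underlie Theorem \eqref{th8}, and then to feed the resulting quantities into the intrinsic formula of Theorem \eqref{th2} with target radius $1$. Write $\Phi=(\Phi_1,\Phi_2)=\i\circ\varphi:\s^{m_1}\times\s^{m_2}\to\r^{n_1+n_2+2}$, where $\Phi_1$ depends only on the first factor and $\Phi_2$ only on the second. Since $F_1$ and $F_2$ are harmonic, Theorem \eqref{th3} (applied on $\s^{m_1}$ and $\s^{m_2}$ with target radii $r_1,r_2$) shows that $\varphi_1,\varphi_2$ are harmonic eigenmaps with constant energy densities; in fact each component of $\Phi_i$ is a spherical harmonic of degree $k_i$ on $\s^{m_i}$, so that, writing $a_i=k_i(m_i+k_i-1)$, one has $\Delta\Phi_i=a_i\Phi_i$ and $|\dz F_i|$ restricts to give $|\d\Phi_i|^2=r_i^2a_i$ on $\s^{m_i}$.

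The first substantive step is to transfer these facts to the product. Because the product metric makes the Levi--Civita connection split as a direct sum and each $\Phi_i$ is constant along the complementary factor, the rough Laplacian decouples as $\Delta=\Delta^{(1)}+\Delta^{(2)}$ and annihilates all cross terms; hence $\tau(\Phi)=(-a_1\Phi_1,-a_2\Phi_2)$, the energy density $|\d\Phi|^2=r_1^2a_1+r_2^2a_2=:E$ is \emph{constant} on the product, and $\tau_2(\Phi)=\Delta\Delta\Phi=(a_1^2\Phi_1,a_2^2\Phi_2)$. From the umbilicity relation $\tau(\varphi)=\tau(\Phi)+E\Phi$ one then obtains $\tau(\varphi)=(a_2-a_1)(r_2^2\Phi_1,-r_1^2\Phi_2)$, so that $\varphi$ is harmonic precisely when $a_1=a_2$; since the radii are fixed, this is equivalent to $e(\varphi_1)=e(\varphi_2)$.

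Next I would substitute these quantities into Equation \eqref{ec-1} with $r=1$. The product structure yields two decisive simplifications: $|\d\Phi|^2\equiv E$ is constant, so $\Delta|\d\Phi|^2=0$ and $\d\Phi(\grad|\d\Phi|^2)=0$; and the one-form $\theta=\langle\d\Phi,\tau(\Phi)\rangle$ vanishes identically, because for $X_i$ tangent to the $i$-th factor $\theta(X_i)=-a_i\langle\d\Phi_i(X_i),\Phi_i\rangle=-\tfrac{a_i}{2}X_i|\Phi_i|^2=0$, whence $\di\theta^\sharp=0$. With $|\tau(\Phi)|^2=a_1^2r_1^2+a_2^2r_2^2$ the formula collapses to $\tau_2(\varphi)=\tau_2(\Phi)+(2E^2-|\tau(\Phi)|^2)\Phi+2E\tau(\Phi)$, and a short computation using $r_1^2+r_2^2=1$ gives
$$
\tau_2(\varphi)=(r_2^2-r_1^2)\,(a_1-a_2)^2\,(r_2^2\Phi_1,-r_1^2\Phi_2).
$$

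Finally the conclusion is read off directly: $\tau_2(\varphi)=0$ forces $r_1^2=r_2^2$ or $a_1=a_2$, while properness $\tau(\varphi)\neq0$ forces $a_1\neq a_2$; combining these, $\varphi$ is proper biharmonic exactly when $r_1=r_2=1/\sqrt{2}$ and $a_1\neq a_2$, equivalently $e(\varphi_1)\neq e(\varphi_2)$. I expect the main obstacle to be the careful justification of the decoupling on the product manifold --- that the tension field, its energy density, the one-form $\theta$, and the iterated Laplacian $\Delta\Delta\Phi$ all split as direct sums of the corresponding single-factor objects --- since this is the only place where the argument genuinely departs from the single-domain Theorem \eqref{th8}; once the decoupling is secured, the remaining algebra is routine and parallels the computation there, and one may check that specialising $m_1=m_2=m$ recovers Equation \eqref{ec-44}.
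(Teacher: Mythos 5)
Your proposal is correct and takes essentially the same route as the paper: both exploit the product structure to obtain $\tau(\Phi)=(-a_1\Phi_1,-a_2\Phi_2)$, constant energy density, $\theta\equiv 0$, and $\tau_2(\Phi)=\Delta\Delta\Phi=(a_1^2\Phi_1,a_2^2\Phi_2)$, and then substitute into Equation \eqref{ec-1} with $r=1$ to read the conclusion off the resulting closed form. In fact your expression $\tau_2(\varphi)=(r_2^2-r_1^2)(a_1-a_2)^2\left(r_2^2\Phi_1,-r_1^2\Phi_2\right)$, with $a_i=k_i(m_i+k_i-1)$, is the correct evaluation: the paper's Equations \eqref{ec-69} and \eqref{ec-72} inadvertently drop a factor of $a_1-a_2$ (they state $\tau_2(\varphi)=(2r_1^2-1)\tau(\varphi)$ rather than $\tau_2(\varphi)=(2r_1^2-1)(a_1-a_2)\tau(\varphi)$), though this slip is immaterial to the equivalence being proved, since $\tau(\varphi)=0$ is itself equivalent to $a_1=a_2$.
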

\begin{proof}
Consider $\{E_i\}^{m_1}_{i=1}$  and $\{F_j\}^{m_2}_{j=1}$, two geodesic frame fields around $\overline{x}_0$ in $\s^{m_1}$, respectively $\overline{y}_0\in\s^{m_2}$. Then $\{(E_i,0)\}^{m_1}_{i=1}\cup \{(0,F_j)\}$ is a geodesic frame field around $(\overline x_0,\overline y_0)\in U\times V \subset\s^{m_1}\times\s^{m_2}$.

We will compute the bitension field of the map $\varphi$ given in Equation \eqref{ec-1} in terms of $F_1$ and $F_2$.

Using the same type of reasoning as in Equations \eqref{ec-15} and \eqref{ec-17} for the individual cases of $\Phi_1$ and $\Phi_2$, on $U\times V$ we have
\begin{align}\label{ec-61}
  \left|\textnormal{d} \Phi \right|^2_{(\overline{x},\overline{y})} =& \left|\textnormal{d} \Phi_1 \right|^2_{\overline{x}} + \left|\textnormal{d} \Phi_2 \right|^2_{\overline{y}} \nonumber\\
   =& \left|\dz F_1 \right|^2_{\overline{x}} - k_1^2r_1^2 + \left|\dz F_2 \right|^2_{\overline{y}} - k_2^2r_2^2\\
   =& k_1 r_1^2 \left( m_1 + k_1 -1 \right) + k_2 r_2^2 \left( m_2 + k_2 -1 \right),\nonumber
\end{align}
which is constant on $\s^{m_1}\times\s^{m_2}$.
It follows that 
\begin{equation}\label{ec-62}
\Delta\left|\textnormal{d}\Phi\right|^2=0
\end{equation}
and 
\begin{equation}\label{ec-63}
\textnormal{d}\Phi\left(\textnormal{grad}\left(\left|\textnormal{d}\Phi\right|^2\right)\right)=0.
\end{equation}
Next, similar to how we obtained Equation \eqref{ec-19}, in our case we get
\begin{align}\label{ec-64}
\tau(\Phi) =& \left( \tau\left(\Phi_1 \right),\tau\left(\Phi_2 \right) \right)\nonumber  \\
    =& \left(-\Dz F_1 - k_1(m_1+k_1-1)\Phi_1,-\Dz F_2 - k_2(m_2+k_2-1)\Phi_2 \right) \\
    =& \left(- k_1(m_1+k_1-1)\Phi_1, - k_2(m_2+k_2-1)\Phi_2 \right),\nonumber
\end{align}
that is equivalent to
\begin{align}\label{ec-65}
  \Delta \Phi =& \left( k_1(m+k_1-1)\Phi_1, k_2 (m+k_2-1)\Phi_2 \right), \quad \textnormal{ on } \s^{m_1}\times\s^{m_2}.
\end{align} 
Thus,
\begin{align}\label{ec-66}
 \left|\tau(\Phi) \right|^2 = k_1^2 r_1^2 (m_1+k_1-1)^2 + k_2^2r_2^2(m_2+k_2-1)^2.
\end{align}
Now we compute $\theta$.
\begin{align*}
\theta(X,Y) =& \left\langle \textnormal{d}\varphi(X,Y), \tau(\varphi) \right\rangle \\
   =& \left\langle \textnormal{d}\Phi(X,Y), \tau(\Phi) \right\rangle \\
   =& \left\langle \textnormal{d}\Phi(X,Y), \left( \tau(\Phi_1), \tau(\Phi_2)\right)\right\rangle.
\end{align*}
Therefore, 
\begin{align}\label{ec-67}
  \theta(X,Y) =& \left\langle \left( \textnormal{d}\Phi_1\left (X\right), \textnormal{d}\Phi_2(Y)\right), \left( - k_1(m_1+k_1-1)\Phi_1, - k_2(m_2+k_2-1)\Phi_2 \right) \right\rangle \nonumber \\
  =&  - k_1(m_1+k_1-1) \left\langle \textnormal{d}\varphi_1\left (X\right), \Phi_1 \right\rangle - k_2(m_2+k_2-1)\left\langle \textnormal{d}\varphi_2(Y), \Phi_2  \right\rangle \\
  =& 0.\nonumber
\end{align}
Using Equation \eqref{ec-65} we have
\begin{align}\label{ec-68}
\tau_2(\Phi) =& \Delta\Delta \Phi\nonumber \\
  =& \left( \Delta\Delta \Phi_1, \Delta\Delta \Phi_2 \right) \\
  =& \left( \Delta\left(k_1(m_1+k_1-1)\Phi_1\right), \Delta\left(k_2(m_2+k_2-1)\Phi_2\right) \right) \nonumber \\
  =& \left(  k_1^2(m_1+k_1-1)^2\Phi_1,  k_2^2(m_2+k_2-1)^2\Phi_2  \right).\nonumber
\end{align}
Now, replacing Equations \eqref{ec-61}, ..., \eqref{ec-68} in Equation \eqref{ec-1} we obtain
\begin{align}\label{ec-69}
\tau_2(\varphi)  =& \left(  k_1^2(m_1+k_1-1)^2\Phi_1,  k_2^2(m_2+k_2-1)^2\Phi_2  \right)\nonumber\\ 
  & + \left(-k_1^2 r_1^2 (m_1+k_1-1)^2 - k_2^2r_2^2(m_2+k_2-1)^2 \right. \nonumber\\
  &\left.+ 2\left(k_1 r_1^2 (m_1+k_1-1) + k_2r_2^2(m_2+k_2-1) \right)^2 \right) \left(\Phi_1, \Phi_2 \right)\nonumber\\
  &+2\left(k_1 r_1^2 (m_1+k_1-1) + k_2 r_2^2(m_2+k_2-1) \right)\\
  &\cdot \left( - k_1(m_1+k_1-1)\Phi_1,  -k_2(m_2+k_2-1)\Phi_2  \right).\nonumber\\
  =&(2r_1^2-1)\left( \frac{1}{r_1^2}\left|\textnormal{d}\Phi_1 \right|^2 - \frac{1}{r_2^2}\left|\textnormal{d}\Phi_2 \right|^2\right)\left( (r_1^2-1)\Phi_1,r_1^2\Phi_2 \right).\nonumber
\end{align}
Now we study the harmonicity of $\varphi$. Using Equation \eqref{ec-61} we obtain
\begin{align}\label{ec-70}
  \tau(\varphi) =& \tau(\Phi) + \left|\textnormal{d}\Phi \right|^2 \Phi\nonumber \\
  =& \left(-k_1(m_1+k_1-1)\Phi_1, -k_2(m_2+k_2-1)\Phi_2 \right)  \nonumber\\
  &+ \left(  k_1r_1^2(m_1+k_1-1) + k_2r_2^2(m_2+k_2-1)^2 \right)\left(\Phi_1, \Phi_2 \right)\nonumber\\
  =& (r_1^2-1)\left( k_1(m_1+k_1-1) - k_2(m_2+k_2-1) \right)\left( \Phi_1,0 \right)\\
  &+r_1^2  \left( k_1(m_1+k_1-1) - k_2(m_2+k_2-1) \right)\left( 0, \Phi_2 \right).\nonumber\\
  =&\left( \frac{1}{r_1^2}\left|\textnormal{d}\Phi_1 \right|^2 - \frac{1}{r_2^2}\left|\textnormal{d}\Phi_2 \right|^2\right)\left( (r_1^2-1)\Phi_1,r_1^2\Phi_2 \right).\nonumber
\end{align}

From Equation \eqref{ec-70} it follows that $\tau(\varphi) = 0$ if and only if 
\begin{equation}\label{ec-71}
\frac{1}{r_1^2}\left|\textnormal{d}\Phi_1 \right|^2 = \frac{1}{r_2^2}\left|\textnormal{d}\Phi_2 \right|^2.
\end{equation}

Using Equations \eqref{ec-69} and \eqref{ec-70} it follows that 
\begin{equation}\label{ec-72}
\tau_2(\varphi) = (2r_1^2-1)\tau(\varphi).
\end{equation}
Therefore, from Equations \eqref{ec-71} and \eqref{ec-72} it follows that $\varphi$ is proper biharmonic if and only if $r_1 =r_2 = 1/\sqrt{2}$ and $\left|\textnormal{d}\Phi_1 \right|^2 \neq \left|\textnormal{d}\Phi_2 \right|^2$, that is, in this case, equivalent to $e\left(\varphi_1 \right) \neq e\left( \varphi_2 \right)$.
 
\end{proof}

\begin{remark}
Although our setting involves harmonic \(k\)-forms, the resulting statements about sphere radii and energy density bear resemblance to Theorem~2.3 in \cite{O03} and Theorem~3.11 in \cite{CMO02}, where immersions and submersions are studied, respectively. Our methods are independent of those references, but the conclusions reflect a comparable geometric pattern.
\end{remark}

\end{document}